\documentclass[11pt, a4paper, reqno]{amsart}

\usepackage{tikz-cd}
\usepackage{array}
\usetikzlibrary{snakes}

\usepackage{amsmath, amssymb, amsfonts, amstext, amsthm, amscd}
\usepackage[mathscr]{euscript}
\usepackage{enumerate}

\usepackage{float}
\usepackage{graphicx} 



\usepackage[paperheight=11.2in,paperwidth=8.1in,bindingoffset=0in,left=0.9in,right=0.9in,top=1in,bottom=1in,headsep=.5\baselineskip]{geometry}
\flushbottom

\usepackage{tikz}
\usepackage{tikz-cd}
\usetikzlibrary{snakes}
\usetikzlibrary{intersections, calc}
\usetikzlibrary{decorations.markings}
\usetikzlibrary{arrows.meta}
\usetikzlibrary{bending}

\usepackage[english]{babel}
\usepackage{chngcntr}

\parskip 0.2cm

\begin{document}
	
	\thispagestyle{empty} 
	\def\theequation{\arabic{section}.\arabic{equation}}

	\newcommand{\codim}{\mbox{{\rm codim}$\,$}}
	\newcommand{\stab}{\mbox{{\rm stab}$\,$}}
	\newcommand{\lr}{\mbox{$\longrightarrow$}}
	
	\newcommand{\be}{\begin{equation}}
		\newcommand{\ee}{\end{equation}}
	
	\newtheorem{guess}{Theorem}[section]
	\newcommand{\bth}{\begin{guess}$\!\!\!${\bf }~}
		\newcommand{\eeth}{\end{guess}}
	\renewcommand{\bar}{\overline}
	\newtheorem{propo}[guess]{Proposition}
	\newcommand{\bpropo}{\begin{propo}$\!\!\!${\bf }~}
		\newcommand{\epropo}{\end{propo}}
	
	\newtheorem{lema}[guess]{Lemma}
	\newcommand{\blem}{\begin{lema}$\!\!\!${\bf }~}
		\newcommand{\elem}{\end{lema}}
	
	\newtheorem{defe}[guess]{Definition}
	\newcommand{\bdefe}{\begin{defe}$\!\!\!${\bf }~}
		\newcommand{\edefe}{\end{defe}}
	
	\newtheorem{coro}[guess]{Corollary}
	\newcommand{\bcor}{\begin{coro}$\!\!\!${\bf }~}
		\newcommand{\ecor}{\end{coro}}
	
	\newtheorem{rema}[guess]{Remark}
	\newcommand{\brem}{\begin{rema}$\!\!\!${\bf }~\rm}
		\newcommand{\erem}{\end{rema}}
	
	\newtheorem{exam}[guess]{Example}
	\newcommand{\beg}{\begin{exam}$\!\!\!${\bf }~\rm}
		\newcommand{\eeg}{\end{exam}}
	
	\newtheorem{notn}[guess]{Notation}
	\newcommand{\bnot}{\begin{notn}$\!\!\!${\bf }~\rm}
		\newcommand{\enot}{\end{notn}}

	\newcommand{\ch}{{\mathcal H}}
	\newcommand{\cf}{{\mathcal F}}
	\newcommand{\cd}{{\mathcal D}}
	\newcommand{\cR}{{\mathcal R}}
	\newcommand{\cv}{{\mathcal V}}
	\newcommand{\cn}{{\mathcal N}}
	\newcommand{\lra}{\longrightarrow}
	\newcommand{\ra}{\rightarrow}
	\newcommand{\blr}{\Big \longrightarrow}
	\newcommand{\da}{\Big \downarrow}
	\newcommand{\ua}{\Big \uparrow}
	\newcommand{\hra}{\mbox{{$\hookrightarrow$}}}
	\newcommand{\rt}{\mbox{\Large{$\rightarrowtail$}}}
	\newcommand{\dua}{\begin{array}[t]{c}
			\Big\uparrow \\ [-4mm]
			\scriptscriptstyle \wedge \end{array}}
	\newcommand{\ctext}[1]{\makebox(0,0){#1}}
	\setlength{\unitlength}{0.1mm}
	\newcommand{\cl}{{\mathcal L}}
	\newcommand{\cp}{{\mathcal P}}
	\newcommand{\ci}{{\mathcal I}}
	\newcommand{\bz}{\mathbb{Z}}
	\newcommand{\cs}{{\mathcal s}}
	\newcommand{\ce}{{\mathcal E}}
	\newcommand{\ck}{{\mathcal K}}
	\newcommand{\cz}{{\mathcal Z}}
	\newcommand{\cg}{{\mathcal G}}
	\newcommand{\ct}{{\mathcal T}}
	\newcommand{\cj}{{\mathcal J}}
	\newcommand{\cc}{{\mathcal C}}
	\newcommand{\ca}{{\mathcal A}}
	\newcommand{\cb}{{\mathcal B}}
	\newcommand{\cx}{{\mathcal X}}
	\newcommand{\co}{{\mathcal O}}
	\newcommand{\bq}{\mathbb{Q}}
	\newcommand{\bt}{\mathbb{T}}
	\newcommand{\bh}{\mathbb{H}}
	\newcommand{\br}{\mathbb{R}}
	\newcommand{\bl}{\mathbf{L}}
	\newcommand{\wt}{\widetilde}
	\newcommand{\im}{{\rm Im}\,}
	\newcommand{\bc}{\mathbb{C}}
	\newcommand{\bp}{\mathbb{P}}
	\newcommand{\ba}{\mathbb{A}}
	\newcommand{\spin}{{\rm Spin}\,}
	\newcommand{\ds}{\displaystyle}
	\newcommand{\tor}{{\rm Tor}\,}
	\newcommand{\bff}{{\bf F}}
	\newcommand{\bs}{\mathbb{S}}
	\def\ns{\mathop{\lr}}
	\def\nssup{\mathop{\lr\,sup}}
	\def\nsinf{\mathop{\lr\,inf}}
	\renewcommand{\phi}{\varphi}
	\newcommand{\tT}{{\widetilde{T}}}
	\newcommand{\tG}{{\widetilde{G}}}
	\newcommand{\tB}{{\widetilde{B}}}
	\newcommand{\tC}{{\widetilde{C}}}
	\newcommand{\tW}{{\widetilde{W}}}
	\newcommand{\tphi}{{\widetilde{\Phi}}}

	\title[Equivariant cohomology of odd--dimensional complex quadrics ]{Equivariant cohomology of odd--dimensional complex quadrics from a combinatorial point of view}

	\author[S. Kuroki]{Shintar{\^o} Kuroki}
	\address{Department of Applied Mathematics Faculty of Science, Okayama University of Science, 1-1 Ridai-Cho Kita-Ku Okayama-shi Okayama 700-0005, Okayama, Japan}
	\email{kuroki@ous.ac.jp}
	
	\author[B. Paul]{Bidhan Paul}
	\address{Chennai Mathematical Institute, SIPCOT IT Park, Siruseri, Kelambakkam, 603103, India}
	\email{bidhanam95@gmail.com; bidhanpaul@cmi.ac.in}
	
	\subjclass {55N91, 05C90}
	\keywords{complex quadrics, GKM manifolds, GKM graph, equivariant cohomology}

\begin{abstract}
	This paper aims to determine the ring structure of the torus-equivariant cohomology  with integer coefficients of odd-dimensional complex quadrics by computing the graph equivariant cohomology of their corresponding GKM graphs. We show that this graph equivariant cohomology is generated by three types of subgraphs in the GKM graph, which satisfy four distinct types of relations. Furthermore, we investigate the relationship between the graph equivariant cohomology rings associated with odd- and even-dimensional complex quadrics.
\end{abstract}

	\maketitle

\section{Introduction}
A {\it complex quadric} $Q_{N}$ is defined by the quadratic equation in the complex projective space $\mathbb{CP}^{N+1}$:
\begin{align*}
Q_{N}:=\Big\{[z_1:\cdots:z_{N+2}]\in\mathbb{CP}^{N+1}~|~\sum_{i=1}^{N+2}z_i^2=0\Big\}.
\end{align*}
If $N$ is odd (resp.~even), then $Q_{N}$ is called an {\it odd-dimensional} (resp.~{\it even-dimensional}) {\it complex quadric}.
The standard $SO(N+2)$-action on  $\mathbb{CP}^{N+1}$ preserves this quadratic equation, leading to a maximal torus $T$-action on $Q_{N}$.

The cohomology ring of $Q_{N}$ has been studied. 
Over the integer coefficients, it is isomorphic to one of the following rings for $n\ge 1$ (see \cite[Excercise 68.3]{EKM08} or \cite{La74} if $N$ is even):
\begin{align*}
H^{*}(Q_{N})\simeq 
\left\{
\begin{array}{lll}
\mathbb{Z}[c,x]/\langle c^{n}-2x, x^{2} \rangle & {\rm if}\ N=2n-1, &  {\rm where}\ \deg c=2,\ \deg x=2n \\
\mathbb{Z}[c,x]/\langle c^{2n}-2cx, x^{2} \rangle &  {\rm if}\ N=4n-2, &  {\rm where}\ \deg c=2,\ \deg x=4n-2 \\
\mathbb{Z}[c,x]/\langle c^{2n+1}-2cx, x^{2}-c^{2n}x \rangle &  {\rm if}\ N=4n, &  {\rm where}\ \deg c=2,\ \deg x=4n
\end{array}
\right.
\end{align*}
 In \cite{ku23}, the first author computed the $T^{n+1}$-equivariant cohomology ring of the even-dimensional complex quadric $Q_{2n}$ using GKM theory (also see \cite{b24}), where the $T^{n+1}:=(S^1)^{n+1}$-action on $Q_{2n}$, which is diffeomorphic to the zero locus of a quadratic equation in $\mathbb{CP}^{2n+1}$  i.e.
	\begin{align*}
		Q_{2n}:=\Big\{[x_1:\cdots:x_{2n+2}]\in\mathbb{CP}^{2n+1}~|~\sum_{i=1}^{n+1}x_1x_{2n+3-i}=0\Big\},
	\end{align*} is given by \begin{equation}\label{evencase_torusaction}
		[x_1:\cdots:x_{2n+2}]\cdot(t_1,\ldots,t_{n+1}):=[x_1t_1:\cdots:x_{n+1}t_{n+1}:t_{n+1}^{-1}x_{n+2}:\cdots:t_1^{-1}x_{2n+2}],
	\end{equation}
where $(t_1,\ldots,t_{n+1})\in T^{n+1}$ and $[x_1:\cdots:x_{2n+2}]\in Q_{2n}$.  The action above is non-effective; the effective action is obtained by quotienting out the kernel (see \cite[Remark 2.2]{ku23}).
This provides a unified formula for the cohomology rings $H^{*}(Q_{4n-2})$ and $H^{*}(Q_{4n})$ using GKM graphs.
The approach also explains why two distinct relations appear in these rings through combinatorial interpretations.
Thus, GKM theory proves to be a powerful tool for studying (equivariant) cohomology rings of $T$-spaces. 

We briefly recall GKM theory here (see also the introduction of \cite{ku23}).
A {\it GKM manifold} is an equivariantly formal manifold $M^{2k}$ (i.e., $H^{odd}(M)=0$) endowed with a compact torus $T := (S^1)^n$ action. 
Goresky, Kottwitz, and MacPherson introduced this concept in \cite{gkm}, where 
the action ensures that the set of $0$- and $1$-dimensional orbits forms a graph structure. 
Motivated by their work, Guillemin and Zara introduced the concept of an \emph{abstract GKM graph} in \cite{gz01}, an abstract graph with labeled edges (see Section~\ref{odcqigg} for details). 
An abstract GKM graphs, which extend beyond their geometric motivations, 
have enriched the field known as \emph{GKM theory}.
In this paper, we study the $T^{n}$-equivariant cohomology of the odd-dimensional complex quadric $Q_{2n-1}$ from a GKM theoretical perspective. 

It is well-known that the $Q_{2n-1}$ is $T^{n}$-equivariantly diffeomorphic to the zero locus of a quadratic equation in $\mathbb{CP}^{2n}$ (see e.g.~\cite[Chapter V.1, 1.1 Theorem]{Se06}): 
\begin{align*}
Q_{2n-1}:=\Big\{[z_1:\cdots:z_{2n+1}]\in\mathbb{CP}^{2n}~|~\sum_{i=1}^{n}z_iz_{2n+1-i}+z_{2n+1}^2=0\Big\}.
\end{align*}
In this paper, we consider $Q_{2n-1}$ as this manifold. 
From this definition, the natural $T^{n}$-action on $Q_{2n-1}$ is defined by:
\begin{equation}\label{introtorusaction}
	[z_1:\cdots:z_{2n+1}]\cdot(t_1,\ldots,t_{n}):=[z_1t_1:\cdots:z_{n}t_{n}:t_{n}^{-1}z_{n+1}:\cdots:t_1^{-1}z_{2n}:z_{2n+1}],
\end{equation}
where $(t_1,\ldots,t_{n})\in T^{n}$. 
Since $Q_{2n-1}$ is diffeomorphic to the oriented Grassmannian $SO(2n+1)/(SO(2n-1)\times SO(2))$ --the set of oriented $2$-planes through the origin in $\br^{2n+1}$-- this action is equivariantly diffeomorphic to the maximal torus $T^{n}$-action induced by restricting the transitive $SO(2n+1)$-action. 
Furthermore, as $SO(2n-1)\times SO(2)\subset SO(2n+1)$ is a maximal rank subgroup of $SO(2n+1)$, the set of $0$- and $1$-dimensional orbits of $T^{n}$-action forms a graph structure (see \cite{ghz06}). 
Therefore, the GKM graph of $Q_{2n-1}$ with $T^{n}$-action \eqref{introtorusaction} can be determined by labeling the edges with tangential representations.

By the formula for $H^{*}(Q_{2n-1})$ as above, we conclude that $H^{odd}(Q_{2n-1})=0$; hence, $Q_{2n-1}$ is an \emph{equivariantly formal} GKM manifold.
Consequently, the equivariant cohomology $H_{T^{n}}^{*}(Q_{2n-1})$ can be computed using the graph equivariant cohomology of its GKM graph, denoted by $\mathcal{GQ}_{2n-1}$. 
	Let $\mathbb{Z}[\mathbf{M}, \mathbf{D}, Q]$ be the polynomial ring generated by some cohomology classes in $H^{*}(\mathcal{GQ}_{2n-1})$ and let $\mathfrak{I} \lhd \mathbb{Z}[\mathbf{M}, \mathbf{D}, Q]$ be the ideal generated by four types of relations (cf. Section~\ref{secmainth}) among the elements in $\mathbf{M}$, $\mathbf{D}$, and $Q$. We  define 
	\[
	\mathbb{Z}[\mathcal{GQ}_{2n-1}] := \mathbb{Z}[\mathbf{M}, \mathbf{D}, Q] / \mathfrak{I}.
	\]
The main theorem of this paper, precisely presented in Section~\ref{secmainth}, is as follows: 
\begin{guess}[Theorem~\ref{mainth}]
\label{main}
There exist the following isomorphisms as a ring:
\begin{align*}
H^{*}_{T^{n}}(Q_{2n-1})\simeq H^{*}(\mathcal{GQ}_{2n-1})\simeq \mathbb{Z}[\mathcal{GQ}_{2n-1}],
\end{align*} 
where the first isomorphism comes from the fact that $Q_{2n-1}$ is an equivariantly formal GKM manifold.
\end{guess}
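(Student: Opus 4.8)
The plan is to establish the two isomorphisms separately, since they are of quite different natures. The first isomorphism $H^{*}_{T^{n}}(Q_{2n-1}) \simeq H^{*}(\mathcal{GQ}_{2n-1})$ is a direct application of GKM theory. As already recorded above, $Q_{2n-1}$ is an equivariantly formal GKM manifold, so by the Goresky--Kottwitz--MacPherson localization theorem the restriction map $H^{*}_{T^{n}}(Q_{2n-1}) \to H^{*}_{T^{n}}((Q_{2n-1})^{T^{n}})$ is injective, and its image consists exactly of those tuples indexed by the fixed points that satisfy the edge-wise congruence conditions imposed by the tangential representations. By definition this image is the graph equivariant cohomology $H^{*}(\mathcal{GQ}_{2n-1})$. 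Thus this step requires only the explicit identification of the fixed points and the edge labels of $\mathcal{GQ}_{2n-1}$, which is carried out in the earlier sections, together with the invocation of the standard theorem.

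The substance of the statement lies in the second isomorphism $H^{*}(\mathcal{GQ}_{2n-1}) \simeq \mathbb{Z}[\mathcal{GQ}_{2n-1}]$, where $\mathbb{Z}[\mathcal{GQ}_{2n-1}]$ is the abstract ring presented by the three families of subgraph generators subject to the four families of relations. I would first define a ring homomorphism $\Phi \colon \mathbb{Z}[\mathcal{GQ}_{2n-1}] \to H^{*}(\mathcal{GQ}_{2n-1})$ sending each abstract generator to the graph cohomology class carried by the corresponding subgraph, i.e.\ the Thom-class-type element supported on that subgraph. Well-definedness of $\Phi$ amounts to verifying that each of the four relations actually holds among these classes in $H^{*}(\mathcal{GQ}_{2n-1})$; this is a finite check performed vertex-by-vertex against the explicit labels, and I expect it to be routine but bookkeeping-heavy.

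Surjectivity of $\Phi$ should follow by showing that every GKM-compatible tuple can be written as an $H^{*}(BT^{n})$-polynomial in the chosen subgraph classes. The natural tool is an inductive argument, filtering by cohomological degree and peeling off the leading term of a given class using the generators, exploiting the partially ordered (flag-like) structure of the vertex set of $\mathcal{GQ}_{2n-1}$ so that at each stage the correction lands in a strictly lower filtration step.

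The main obstacle will be injectivity of $\Phi$, that is, proving that the four families of relations already generate the whole kernel. My plan is to avoid direct relation-chasing and instead compare module structures over $R := H^{*}(BT^{n}) = \mathbb{Z}[t_1,\ldots,t_n]$. By equivariant formality, $H^{*}(\mathcal{GQ}_{2n-1})$ is a free $R$-module whose rank equals the number of $T^{n}$-fixed points, with a graded Poincar\'e series determined by the degrees of a localization basis. I would then construct an explicit $R$-basis of $\mathbb{Z}[\mathcal{GQ}_{2n-1}]$ consisting of monomials in the subgraph generators, indexed by the fixed points, and show that it makes $\mathbb{Z}[\mathcal{GQ}_{2n-1}]$ a free $R$-module of the same rank and the same graded Poincar\'e series. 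Since $\Phi$ is then a graded surjection between free $R$-modules that are free of equal finite rank in each degree, it must be an isomorphism. Establishing that the presentation yields a \emph{free} module of \emph{exactly} this rank, rather than something larger or with torsion, is precisely where completeness of the four relations is used, and with $\mathbb{Z}$-coefficients the torsion-freeness of the presented ring is the delicate point to control; this is where I expect the bulk of the work to concentrate.
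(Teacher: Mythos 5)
Your overall architecture matches the paper's: the first isomorphism is geometric input, and the substance is the presentation $\mathbb{Z}[\mathcal{GQ}_{2n-1}]\simeq H^{*}(\mathcal{GQ}_{2n-1})$, proved by a well-definedness check of the four relations, an inductive ``peel off the leading term vertex by vertex'' surjectivity argument, and then injectivity. One point needs repair in your first step: since the theorem is over $\mathbb{Z}$, the classical Goresky--Kottwitz--MacPherson localization theorem (a statement over $\mathbb{Q}$) does not by itself give that the image of $H^{*}_{T^{n}}(Q_{2n-1})\to H^{*}_{T^{n}}(Q_{2n-1}^{T^{n}})$ is cut out by the congruence relations. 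The paper instead invokes the integral criterion of Franz--Puppe, which requires checking that all isotropy subgroups of the action are connected; without this (or an equivalent hypothesis on the weights) the integral statement can fail. This is a fixable but genuine omission.

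For injectivity you take a genuinely different route. The paper first proves that for each vertex $v$ the quotient $\bz[\mathcal{GQ}_{2n-1}]/\langle G_J\ |\ v\notin J\rangle$ is a polynomial ring isomorphic to $H^{*}(BT^{n})$ (Lemma~\ref{lemma5.4}), writes an arbitrary element in the normal form $h_1+h_2M_1+\cdots+h_nM_1\cdots M_{n-1}+h_{n+1}\Delta_{\{n+1,\ldots,2n\}}+\cdots+h_{2n}\Delta_{\{2n\}}$, and kills the coefficients $h_i$ one at a time by evaluating at the vertices $1,2,\ldots,2n$, using that each vertex localization is an integral domain. You propose instead to count: show the $2n$ normal-form monomials span $\bz[\mathcal{GQ}_{2n-1}]$ over $R=H^{*}(BT^{n})$ and compare graded ranks with the free $R$-module $H^{*}(\mathcal{GQ}_{2n-1})$. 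This does work, and your worry about torsion is actually resolved for free: in each degree a surjection from an abelian group generated by $N$ elements onto $\mathbb{Z}^{N}$ is automatically an isomorphism, so once the spanning set has the same graded cardinality as an $R$-basis of the (free, rank $2n$) target, injectivity follows. Note, however, that both approaches hinge on the same rewriting step --- that the four relations reduce every monomial to the normal form (the paper's Remark~\ref{surjremark1}, which rests on Relations 2--4 to eliminate $Q$, the $M_{\overline{v}}$, and the lower $\Delta_K$) --- so your plan does not avoid the relation-chasing, it only replaces the final vertex-evaluation by a rank count. What the paper's method buys in exchange is the explicit description of the vertex localizations, which it then reuses in Section~\ref{sect:6} and Section~\ref{sect:7}; what yours buys is that the completeness of the relations is isolated in a single combinatorial counting statement.
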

On the other hand, there is a non-effective $T^{n}$-action on $Q_{2n-2}$, which is restricted from that on $Q_{2n-1}$.
In the second part of this paper (Section~\ref{sect:7}), we study the homomorphism $H^{*}_{T}(Q_{2n-1})\to H_{T}^{*}(Q_{2n-2})$ using GKM theory. 
We also compute the equivariant cohomology of the non-effective $T^{n}$-action on $Q_{2n-2}$, obtaining the following result by combining Lemma~\ref{key-lemma} and Theorem~\ref{thm-non-effective-action} (see Section~\ref{sect:7} for details):
\begin{guess}
\label{main2}
There exist the following isomorphisms as a ring:
\begin{align*}
H^{*}_{T^{n}}(Q_{2n-2})\simeq H^{*}(\mathcal{GQ}_{2n-2})\simeq \mathbb{Z}[\mathcal{M}, \mathcal{D}, X]/\mathcal{I},
\end{align*} 
where $X$ and the elements in $\mathcal{M}$ are of degree 2, and $\Delta_K'\in\mathcal{D}$ for $K\subset \cv_{2n-2}$ is of degree $4n-2|K|-2$.
\end{guess}
Note that the generators $\mathcal{M}$ and $\mathcal{D}$ in Theorem~\ref{main2} also appear in the equivariant cohomology of $Q_{2n-2}$ with the effective $T^{n}$-action as the equivariant Thom classes, as shown in \cite{ku23}. 
Theorem~\ref{main2} reveals that in the non-effective case, we also need an additional generator $X$, which is not induced from the equivariant Thom classes.
Motivated by this phenomenon, in the final part of this paper (Section~\ref{sect:8}), we further study the non-effective $T^{1}$-actions on $\mathbb{CP}^1$, and obtain  the following result:
\begin{guess}
\label{main3}
Let $\varphi_{n}$ be the $n$-times rotated $T^{1}$-action on $\mathbb{CP}^1$ for $n\ge 0$, and let $H_{\varphi_{n}}^{*}(\mathbb{CP}^{1})$ be its equivariant cohomology.
Then,  there is the following ring isomorphism:
\begin{align*}
H_{\varphi_{n}}^{*}(\mathbb{CP}^{1})\simeq \mathbb{Z}[\tau_{p},\tau_{q}, \alpha]/\langle \tau_{p}\tau_{q}, n\alpha-\tau_{p}+\tau_{q} \rangle,
\end{align*} where 
$\tau_{p}$ (resp.~$\tau_{q}$) is the equivariant Thom class associated with the fixed points $p=[1:0]$ (resp.~$q=[0:1]$), and
$\alpha$ is the pull-back of the generator of $H^{*}(BT^{1})\simeq \mathbb{Z}[\alpha]$.
\end{guess}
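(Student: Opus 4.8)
The plan is to compute $H^*_{\varphi_n}(\mathbb{CP}^1)$ by localization and then match it against the proposed presentation degree by degree. First suppose $n\ge 1$. Since $H^{odd}(\mathbb{CP}^1)=0$, the action $\varphi_n$ is equivariantly formal, so the restriction homomorphism to the fixed set $(\mathbb{CP}^1)^{T^1}=\{p,q\}$ is injective, and by the identification of equivariant and graph cohomology used throughout this paper it realizes $H^*_{\varphi_n}(\mathbb{CP}^1)$ as the graph cohomology ring of the two-vertex, one-edge GKM graph
\[
R_n:=\{(f,g)\in \mathbb{Z}[\alpha]\oplus\mathbb{Z}[\alpha] \mid f\equiv g \pmod{n\alpha}\}.
\]
The edge $\{p,q\}$ carries the tangential weight of $\varphi_n$: near $p=[1:0]$ the local coordinate $z_1/z_0$ is rotated by $t^n$, giving weight $n\alpha$, while near $q=[0:1]$ the coordinate $z_0/z_1$ is rotated by $t^{-n}$, giving weight $-n\alpha$.

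Next I would identify the three generators under this isomorphism. The pullback $\alpha$ restricts to the generator of $H^*(BT^1)$ at each fixed point, so $\alpha\mapsto(\alpha,\alpha)$. The equivariant Thom class $\tau_p$ restricts to the equivariant Euler class of the normal bundle $T_p\mathbb{CP}^1$ at $p$ and to $0$ at $q$; with the complex orientation this gives $\tau_p\mapsto(n\alpha,0)$, and symmetrically $\tau_q\mapsto(0,-n\alpha)$. The point to get right is the sign: using the complex orientation at \emph{both} fixed points forces $e(T_q\mathbb{CP}^1)=-n\alpha$, which is exactly what produces the stated relation rather than $\tau_p+\tau_q=n\alpha$. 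One then checks immediately that $\tau_p\tau_q=(0,0)=0$ and $n\alpha-\tau_p+\tau_q=(n\alpha,n\alpha)-(n\alpha,0)+(0,-n\alpha)=(0,0)$, so the assignment extends to a well-defined ring homomorphism
\[
\Psi\colon \mathbb{Z}[\tau_p,\tau_q,\alpha]/\langle \tau_p\tau_q,\; n\alpha-\tau_p+\tau_q\rangle \longrightarrow R_n.
\]

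To prove $\Psi$ is an isomorphism I would first simplify the source: using $\tau_p=\tau_q+n\alpha$ to eliminate $\tau_p$, the presentation becomes $\mathbb{Z}[\alpha][\tau_q]/\langle \tau_q^2+n\alpha\,\tau_q\rangle$, a free $\mathbb{Z}[\alpha]$-module of rank two with basis $\{1,\tau_q\}$ since the defining relation is monic in $\tau_q$. Thus the source is a free graded abelian group with $\mathbb{Z}$-basis $\{1\}$ in degree $0$ and $\{\alpha^d,\alpha^{d-1}\tau_q\}$ in each degree $2d$ with $d\ge 1$. A direct computation shows $\Psi$ sends this basis to $\{(1,1)\alpha^d,(0,-n)\alpha^d\}$, which is a $\mathbb{Z}$-basis of $R_n^{2d}=\{(a,b)\alpha^d\mid n\mid a-b\}$, while $\Psi(1)=(1,1)$ spans $R_n^{0}$. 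Hence $\Psi$ is a degreewise isomorphism of free $\mathbb{Z}$-modules, and therefore a ring isomorphism; composing with the localization identification proves the claim for $n\ge1$.

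Finally, the case $n=0$ must be handled separately, since $\varphi_0$ is the trivial action, the fixed set is all of $\mathbb{CP}^1$, and the GKM description no longer applies. Here equivariant formality (or the K\"unneth theorem) gives $H^*_{\varphi_0}(\mathbb{CP}^1)\cong H^*(\mathbb{CP}^1)\otimes\mathbb{Z}[\alpha]$, and setting $n=0$ collapses the relation $n\alpha-\tau_p+\tau_q$ to $\tau_p=\tau_q$; writing $\beta$ for this common class, the presentation becomes $\mathbb{Z}[\beta,\alpha]/\langle\beta^2\rangle$, which matches. The only genuinely delicate step in the whole argument is the orientation bookkeeping for the two Thom classes; once the signs of the Euler classes are pinned down, both surjectivity and injectivity of $\Psi$ follow from the elementary rank count above.
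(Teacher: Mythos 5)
Your algebraic core is correct and in fact tidier than the paper's own argument: after establishing the congruence-ring model $R_n$, the paper verifies the same two relations, reduces the presentation to a rank-two free $\mathbb{Z}[\alpha]$-module on $\{1,\tau_p\}$, and then checks surjectivity and injectivity by writing out a general element of $R_n$; your elimination of $\tau_p$ and the degreewise basis comparison $\{(1,1)\alpha^d,(0,-n)\alpha^d\}$ versus $\{(a,b)\alpha^d\mid n\mid a-b\}$ accomplishes the same thing more cleanly. You also correctly pin down the sign convention $\tau_q\mapsto(0,-n\alpha)$, which is the convention forced by the stated relation $n\alpha-\tau_p+\tau_q$ (and is the one shown in the paper's Figure~\ref{generators_of_non_effective_2_sphere}, even though the displayed definition ``$\tau_q(q)=n\alpha$'' in the paper's proof carries a sign typo). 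Your $n=0$ case matches the paper's treatment via $H^*(\mathbb{CP}^1)\otimes\mathbb{Z}[\alpha]$.

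The genuine gap is in your very first step, the identification $H^*_{\varphi_n}(\mathbb{CP}^1)\simeq R_n$ for $n\ge 1$. You justify it by ``the identification of equivariant and graph cohomology used throughout this paper,'' but that identification (via Franz--Puppe \cite{fp07}) requires all isotropy groups to be connected, and for $n\ge 2$ the generic isotropy group of $\varphi_n$ is $\mathbb{Z}_n$. Equivariant formality gives you injectivity of the restriction to $\{p,q\}$, but not the description of the image; determining the image is exactly the delicate point of this section of the paper, and it is not automatic that the naive GKM recipe with weight $n\alpha$ computes it over $\mathbb{Z}$ for a non-effective action. The paper establishes the congruence ``$f(p)-f(q)\equiv 0 \bmod n\alpha$'' (Theorem~\ref{GKM description}) by an explicit Mayer--Vietoris computation in which the modulus $n$ enters through $H^*_T(S^1)\simeq H^*(B\mathbb{Z}_n)\simeq\mathbb{Z}[\alpha]/\langle n\alpha\rangle$ for the great circle between the fixed points; alternatively one could invoke Theorem~\ref{FPT}, whose hypotheses do hold here. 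Some such argument must be supplied before the rest of your proof applies; with it in place, the remainder goes through as written.
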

Comparing the effective ($n=1$) and the non-effective ($n\not=1$) cases, we also observe that there is an additional generator for the non-effective $T^{1}$-actions on $\mathbb{CP}^1$, which may not be induced from the equivariant Thom classes.

{\bf This article is structured as follows:}
In Section \ref{odcqigg}, we compute the GKM graph $\mathcal{GQ}_{2n-1}$ of the effective $T^n$-action on $Q_{2n-1}$. In Section 3, we introduce the graph equivariant cohomology  $H^*(\mathcal{GQ}_{2n-1})$ and define the generators $M_v,$ $\Delta_K$ and $Q$, studying their properties. 
In Section 4, we present the four relations among $M_v,~\Delta_K$ and $Q$. 
The main theorem (Theorem \ref{mainth}) is proved in Section \ref{secmainth}. In Section \ref{sect:6}, the ordinary cohomology ring of $Q_{2n-1}$ is studied from a GKM theoretical perspective. 
The comparison of two graph equivariant cohomologies induced from even- and odd- dimensional complex quadrics is studied in Section \ref{sect:7}.
Finally, in Section \ref{sect:8}, we provide a GKM description for non-effective $T^1$-actions on $Q_{1}\simeq \mathbb{CP}^1$.

\section{GKM graphs of odd-dimensional complex quadrics $Q_{2n-1}$}
\label{odcqigg}
We use the symbol $(Q_{2n-1}, T^{n})$ to denote the $T^n$-action on $Q_{2n-1}$ defined by \eqref{introtorusaction}. 
We first describe the GKM graph of $(Q_{2n-1}, T^{n})$. 
For the basics of GKM manifolds and GKM graphs, see the paper \cite{gz01}. 
In this article, we identify the cohomology ring $H^*(BT^n)$ as the following polynomial ring generated by degree 2 generators $\alpha_1,\ldots,\alpha_n$:
\begin{equation}\label{2.2}
	H^*(BT^n)\simeq\bz[\alpha_1,\ldots,\alpha_n].
\end{equation}
Here, one may consider the generator $\alpha_j$, for $j=1,\ldots,n$, as the $j$-th coordinate projection $pr_j:T^n\to S^1$. Namely, we often use the following identifications:
\begin{equation}
\label{identificaitons}
H^2(BT^n)\simeq \mbox{Hom}(T^n,S^1)\simeq (\mathfrak{t}^n_{\bz})^*\simeq \bz^n,
\end{equation}
where $\mathfrak{t}^n_{\bz}$ is the lattice of the Lie algebra of $T^n$ and $(\mathfrak{t}^n_{\bz})^{*}$ is its dual.

\subsection{The GKM graph of the $T^n$-action on $Q_{2n-1}$}
\label{sect:2.1}
We now compute the GKM graph of $(Q_{2n-1}, T^{n})$. It is easy to check that the $T^n$-fixed points of $Q_{2n-1}$ are 
 \[Q_{2n-1}^T=\{[e_i]\,:\,1\leq i\leq 2n\},\] where $[e_i]=[0:\cdots:0:1:0:\cdots:0]\in \mathbb{CP}^{2n}$ is the $i$-th coordinate with $1$ and $0$ otherwise.
Furthermore, the $T^{n}$-invariant $2$-spheres of $Q_{2n-1} $ are of the following two types:
 \[\begin{split}
	&[z_i:z_j]:= [0:\cdots:0:z_i:0:\cdots:0:z_j:0:\cdots:0]\in Q_{2n-1},~~~ \text{if $i+j\neq 2n+1$};\\
	&[z_i:z_j]:= [0:\cdots:0:z_i:0:\cdots:0:z_j:0:\cdots:z_{2n+1}]\in Q_{2n-1},~~~ \text{if $i+j= 2n+1$}.
\end{split}\]
Note that the second type as above satisfies the equation $z_{i}z_{j}+z_{2n+1}^{2}=0$.
Therefore, every pair of $[e_i], [e_j]\in Q_{2n-1}^T$ becomes the fixed points of a $T^{n}$-invariant $2$-sphere. 
Hence, we associate the following graph $\Gamma_{2n-1} := (\mathcal{V}_{2n-1},~ \mathcal{E}_{2n-1})$ with the pair $(Q_{2n-1}, T^{n})$:
\begin{description}
\item[Vertices] The set of vertices $\mathcal{V}_{2n-1}$ consists of $Q_{2n-1}^T$, where we may identify the vertices $Q_{2n-1}^T=\{[e_{i}]\ |\ 1\le i\le 2n\}$ with the set of numbers $[2n]:=\{1,\ldots,2n\}$;
\item[Edges] The set of edges $\mathcal{E}_{2n-1}$ consists of $ij$ for every $i,j\in \mathcal{V}_{2n-1}$.
\end{description}
Combinatorially, $\Gamma_{2n-1}$ is just the complete graph with $2n$ vertices.
\begin{rema}
Here we consider the edges to be undirected. This means that for each undirected edge in the graph, there are two directed edges, one in each direction.
\end{rema} 
\begin{figure}[H]
	\centering
	\begin{tikzpicture}[row sep=small, column sep=small]
		\node[draw, circle, fill=black, inner sep=1.5pt, label=right:5] (A) at (1, 0.5) {};
		\node[draw, circle, fill=black, inner sep=1.5pt, label=right:4] (B) at (1,-0.5) {};
		\node[draw, circle, fill=black, inner sep=1.5pt, label=left:3] (C) at (-1, 0.5) {};
		\node[draw, circle, fill=black, inner sep=1.5pt, label=left:2] (D) at (-1, -0.5) {};
		\node[draw, circle, fill=black, inner sep=1.5pt, label=6] (E) at (0, 1) {};	
		\node[draw, circle, fill=black, inner sep=1.5pt, label=below:1] (F) at (0, -1) {};		
		\draw (A) -- node[midway, below, sloped]  {} (B);
		\draw (B) -- node[midway, below right, sloped] {} (D);
		\draw (C) -- node[midway, below left, sloped] {} (A);
		\draw (A) -- node[midway, below, sloped]  {} (E);
		\draw (A) -- node[midway, below, sloped]  {} (F);
		\draw (B) -- node[midway, below, sloped]  {} (F);
		\draw (B) -- node[midway, below, sloped]  {} (E);
		\draw (F) -- node[midway, below, sloped]  {} (D);
		\draw (F) -- node[midway, below, sloped]  {} (C);
		\draw (D) -- node[midway, below, sloped]  {} (C);
		\draw (C) -- node[midway, below, sloped]  {} (E);
		\draw (D) -- node[midway, below, sloped]  {} (E);
		\draw (F) -- node[midway, below, sloped]  {} (E);
		\draw (B) -- node[midway, below, sloped]  {} (C);
		\draw (A) -- node[midway, below, sloped]  {} (D);
	\end{tikzpicture}
	
\caption{The above graph shows $\Gamma_5$ for $n=3$ induced from $(Q_{5},T^3)$.} \label{fig:gamma4diag}
\end{figure}
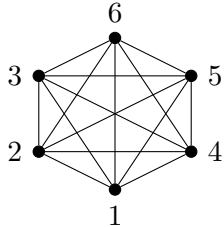

\begin{rema}
\label{def_comb_GKMgraph}
	For simplicity, we often write $j$ as $\overline{i}$ when $i+j=2n+1$. Hence the set of vertices and edges in $\Gamma_{2n-1}$ can be rewritten as follows: 
\begin{align}
\mathcal{V}_{2n-1}&:=\{1,\ldots,n,\overline{n},\overline{n-1},\ldots, \overline{1}\}; \\
\mathcal{E}_{2n-1}&:=\{ij\,:\,i,j\in\mathcal{V}_{2n-1}\text{ such that } j\neq i\}.
\end{align}	
\end{rema}

We next define an \emph{axial function} $\alpha :\ce_{2n-1}\to H^2(BT^{n})$ on the edges by computing the tangential representations around the fixed points. 
Since the computation for the other vertices follows a similar method, we will compute the tangential representation only around the fixed point $[e_1]=[1:0:\cdots:0]$.
Around $[e_1]\in Q_{2n-1}^{T}$, the tangential representation is given by
\[[1:z_2:\cdots:z_{2n}:z_{2n+1}]\mapsto [1:t_1^{-1}t_2z_2:\cdots:t_1^{-1}t_nz_n:t_1^{-1}t_n^{-1}z_{n+1}:\cdots:t_1^{-2}z_{2n}:t_1^{-1}z_{2n+1}]. \]
We notice that the quadric equation for the $z_{2n}$ coordinate satisfies the following relation:
\[\Big(z_{2n}=-\sum_{i=2}^{n}z_iz_{2n+1-i}-z_{2n+1}^2\Big)\mapsto\Big( t_1^{-2}z_{2n}=-\sum_{i=2}^{n}(t_1^{-1}t_i)z_i(t_1^{-1}t_i^{-1})z_{2n+1-i}-(t_1^{-1}z_{2n+1})^2\Big).\]
This shows that the representation on the $z_{2n}$ coordinate is automatically determined by the representations on the other corrdinates.
Therefore, the tangential representation around the fixed point $[e_{1}]$ can be obtained from the representations on the coordinates except $z_{1}$ and $z_{2n}$. More precisely, it splits into the following complex $1$-dimensional irreducible representations:
\[T_{[e_{1}]}Q_{2n-1}\simeq \bigoplus_{i=2}^{2n-1} V(-\alpha_{1}+\alpha_{i})\oplus V(-\alpha_{1}),\]
where $V(\beta)$ is the complex $1$-dimensional representation defined by the homomorphism $\beta:T^{n}\to S^{1}$.
Recall that $\alpha_{i}\in \mbox{Hom}(T^{n},S^{1})$ for $i\in [n]$ is the representation corresponding to the $i$-th coordinate projection $pr_i :T^n\to S^1$ (see \eqref{2.2} and \eqref{identificaitons}).  
For $i\in \{n+1,\ldots,2n\}$, we regard 
\begin{equation}
\label{label_opposite_vertex}
\alpha_i:=-\alpha_{\overline{i}}~~\text{ for $i\in \{n+1,\ldots,2n\}$ and $\overline{i}=2n+1-i$.}
\end{equation}
Similarly, each tangential representation around a fixed point decomposes into complex $1$-dimensional irreducible representations. Furthermore, each $1$-dimensional irreducible representation corresponds to the tangential representation on the fixed point of the invariant $2$-sphere.
Therefore, we can define the following axial function on the edges: 
\begin{equation}
\label{axial_function_of_odd-cpx-quadrics}
	\alpha:\ce_{2n-1}\to H^2(BT^n)
\end{equation} 
which takes 
\begin{align*}
&\alpha(ij)=-\alpha_{i}+\alpha_{j} \text{ for $j\not=i,\overline{i}$},\\ 
&\alpha(i\overline{i})=-\alpha_{i}.
\end{align*}  
More precisely, using the notation \eqref{label_opposite_vertex}, 
we have the following assignments:
\begin{itemize}
	\item $\alpha(ij)=-\alpha_i+\alpha_j$ for $1\leq i\neq j\leq n$;
		\item $\alpha(i\overline{j})=-\alpha_{i}+\alpha_{\overline{j}}=-\alpha_i-\alpha_j$ for $1\leq i\neq j\leq n$;
			\item $\alpha(\overline{i}j)=-\alpha_{\overline{i}}+\alpha_{j}=+\alpha_i+\alpha_j$ for $1\leq i\neq j\leq n$;
				\item $\alpha(\overline{i}\overline{j})=-\alpha_{\overline{i}}+\alpha_{\overline{j}}=\alpha_i-\alpha_j$ for $1\leq i\neq j\leq n$;
					\item $\alpha(i\overline{i})=-\alpha_i$ for $1\leq i\leq n$, 
						\item $\alpha(\overline{i}i)=-\alpha_{\overline{i}}=\alpha_i$ for $1\leq i\leq n$.
\end{itemize}
Note that the relation $\alpha(ij)=-\alpha(ji)$ holds for all $ij\in \mathcal{E}_{2n-1}$.

\begin{notn}
Henceforth, the symbol $\mathcal{GQ}_{2n-1}$ represents
the GKM graph $(\Gamma_{2n-1},\alpha)$, where $\Gamma_{2n-1}=(\cv_{2n-1},\ce_{2n-1})$ in Remark~\ref{def_comb_GKMgraph}, and the axial function $\alpha:\mathcal{E}_{2n-1}\to H^{2}(BT^{n})$ is defined by \eqref{axial_function_of_odd-cpx-quadrics}.
\end{notn}

\begin{rema}
\label{comapre_symp_geom}
Note that the $T^{n}$-action on $Q_{2n-1}\subset \mathbb{CP}^{2n}$ 
is a Hamiltonian torus action obtained by restricting the $T^{n}$-action on $\mathbb{CP}^{2n}$. 
It is easy to check that its moment-map image is the $n$-dimensional crossed polytope, i.e., ${\rm Conv}\{\pm e_{i}\ |\ i=1,\ldots, n\}\subset (\mathfrak{t}^{n})^{*}.$ However, the GKM graph $\Gamma_{2n-1}$ of $Q_{2n-1}$ is the complete graph with $2n$ vertices, not the one-skeleton of the crossed polytope.
On the other hand, the restricted (non-effective) $T^{n}$-action on $Q_{2n-2}(\subset Q_{2n-1})$ is also a Hamiltonian torus action whose moment-map image is the $n$-dimensional crossed polytope.
By Section~\ref{sect:7.1} (also see \cite{ku23}), we see that the GKM graph of $Q_{2n-2}$ is indeed the one-skeleton of the crossed polytope.
\end{rema}

\subsection{Examples of $\mathcal{GQ}_{2n-1}$ for $n=2,3$.}
\label{sect:2.2}
In this subsection, we present two low-dimensional examples of $\mathcal{GQ}_{2n-1}$. 
Figure~\ref{lowest-example} depicts the labeled graph on the edges of $\Gamma_{3}$ using axial functions, i.e., the GKM graph $\mathcal{GQ}_{3}$.

\begin{figure}[H]
\begin{tikzpicture}
\begin{scope}[xscale=0.5, yscale=0.5]
\coordinate (1) at (-3,-3);
\coordinate (2) at (-3,3);
\coordinate (3) at (3,3);
\coordinate (4) at (3,-3);

\fill(1) circle (5pt);
\node[left] at (1) {$1$};
\fill(2) circle (5pt);
\node[left] at (2) {$2$};
\fill(3) circle (5pt);
\node[right] at (3) {$4=\overline{1}$}; 
\fill(4) circle (5pt);
\node[right] at (4) {$3=\overline{2}$}; 

\draw[thick, decoration={markings,
    mark=at position 0.5 with \arrow{>}},
    postaction=decorate](1)--(2);
\node[left] at (-3,0) {$-\alpha_{1}+\alpha_{2}$};

\draw[thick, decoration={markings,
    mark=at position 0.2 with \arrow{>}},
    postaction=decorate](1)--(3);
\node[right] at (-2,-2) {$-\alpha_{1}$};

\draw[thick, decoration={markings,
    mark=at position 0.5 with \arrow{>}},
    postaction=decorate](1)--(4);
\node[right] at (3,0) {$\alpha_{1}-\alpha_{2}$};

\draw[thick, decoration={markings,
    mark=at position 0.5 with \arrow{>}},
    postaction=decorate](3)--(2);
\node[above] at (0,3) {$\alpha_{1}+\alpha_{2}$};

\draw[thick, decoration={markings,
    mark=at position 0.2 with \arrow{>}},
    postaction=decorate](2)--(4);
\node[right] at (-2,2) {$-\alpha_{2}$};

\draw[thick, decoration={markings,
    mark=at position 0.5 with \arrow{>}},
    postaction=decorate](3)--(4);
\node[below] at (0,-3) {$-\alpha_{1}-\alpha_{2}$};

\end{scope}
\end{tikzpicture}
\caption{The above figure shows the GKM graph $\mathcal{GQ}_{3}$. This satisfies $\alpha(ij)=-\alpha(ji)$; therefore, we omit the labels in the opposite directions.}
\label{lowest-example}
\end{figure}
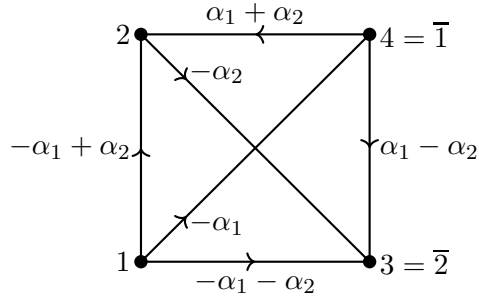

On the other hand, the following table represents the axial functions on the edges of $\Gamma_{5}$.
For $i,j\in \{1,\ldots, 6\}$, 
the $ij$ box corresponds to the edge $ij\in \mathcal{E}_{5}$ of $\Gamma_{5}$ in Figure~\ref{fig:gamma4diag} and its value indicates $\alpha(ij)$.
From this table, we can immediately reconstruct the labeled graph, i.e., the GKM graph $\mathcal{GQ}_{5}$, from this table. 
Therefore, this table also represents the GKM graph.

\begin{table}[H]
	\begin{tabular}{|c|c|c|c|c|c|c|}
		\hline
		&1&2&3&$4=\overline{3}$&$5=\overline{2}$&$6=\overline{1}$\\
	\hline
	$1$&N/A &$-\alpha_1+\alpha_2$ &$-\alpha_1+\alpha_3$ &$-\alpha_1-\alpha_3$ &$-\alpha_1-\alpha_2$ &$-\alpha_1$\\
	\hline
	$2$&$\alpha_1-\alpha_2$&N/A&$-\alpha_2+\alpha_3$&$-\alpha_2-\alpha_3$&$-\alpha_2$&{$-\alpha_1-\alpha_2$}\\
	\hline
	$3$&$\alpha_1-\alpha_3$ &$\alpha_2-\alpha_3$ &N/A &$-\alpha_3$ &{$-\alpha_2-\alpha_3$} &{$-\alpha_1-\alpha_3$}\\
	\hline
	$\overline{3}$ &$\alpha_1+\alpha_3$ &$\alpha_2+\alpha_3$ &$\alpha_3$ &N/A&$-\alpha_2+\alpha_3$ &$-\alpha_1+\alpha_3$\\
	\hline
	$\overline{2}$ &$\alpha_1+\alpha_2$ &$\alpha_2$ &{$\alpha_2+\alpha_3$} &$\alpha_2-\alpha_3$ &N/A&$-\alpha_1+\alpha_2$\\	
	\hline
	$\overline{1}$	&$\alpha_1$&$\alpha_1+\alpha_2$ &$\alpha_1+\alpha_3$ &$\alpha_1-\alpha_3$&$\alpha_1-\alpha_2$ &$N/A$\\
	\hline
		
	\end{tabular}
	\caption{This table shows the axial functions on the edges of the graph $\Gamma_{5}$ in Figure~\ref{fig:gamma4diag}, where N/A indicates that there are no edges connecting $ii$ for $i\in \mathcal{V}_{5}$.
	This information also provides the same details of a GKM graph, as illustrated in Figure~\ref{lowest-example}.}
	\label{table1}
\end{table}

\subsection{GKM subgraphs} 
\label{sect:2.3}
In this subsection, we recall the notion of GKM subgraphs of a GKM graph $(\Gamma, \alpha)$. Let $\Gamma' \subset \Gamma$ be a subgraph. We use the following notation:
\begin{itemize}
\item $\cv(\Gamma)$ and $\ce(\Gamma)$ (resp.\ $\cv(\Gamma')$ and $\ce(\Gamma')$) denote the sets of vertices and edges of $\Gamma$ (resp.\ of $\Gamma'$), respectively.
	\item For a vertex $v\in\cv(\Gamma')\subset \cv(\Gamma)$, let $\ce_v(\Gamma)$ (resp. $\ce_v(\Gamma')$) is the set of outgoing edges in $\Gamma$ (resp.~ in $\Gamma'$).
\end{itemize}
Let $e=vw\in \ce(\Gamma)$ be an edge. We define a {\it connection along} $e$  to be a bijection
\begin{equation}\label{conn}
	\nabla_e : \ce_v(\Gamma)\to \ce_w(\Gamma)
\end{equation}  
such that $\nabla_e(e)=\overline{e}$, where $\overline{e}$ is the edge with direction opposite to $e$. Furthermore, we define a \emph{connection} on graph $\Gamma$  to be a collection $\nabla:=\{\nabla_e\}_{e\in\ce(\Gamma)}$ satisfying $\nabla_e^{-1}=\nabla_{\overline{e}}$ for every $e\in\ce(\Gamma)$.
%
\begin{defe}[GKM subgraph]\label{gkmsubgraph}
	Let $\Gamma'$ be a subgraph of $\Gamma$ and $\alpha'$ be the restricted axial function on its edges, i.e., $\alpha':=\alpha|_{\mathcal{E}(\Gamma')}$. 
	Let $\nabla$ be the connection on $\Gamma$.  
	We call $(\Gamma',\alpha')$ a {\it GKM subgraph} if it is closed under $\nabla$. 
	More precisely, for all $e\in\ce(\Gamma')$ with $i(e) = v$ and $t(e) = w\in \cv(\Gamma')$, the restricted bijection \[(\nabla_e)|_{\ce_v(\Gamma')} :\ce_v(\Gamma')\to \ce_w(\Gamma')\] is well-defined, 
	In this case, $(\Gamma',\alpha')$ where $\alpha':=\alpha|_{\Gamma'}$ is again a GKM graph.
\end{defe}

\begin{exam}
	Consider the GKM graph of \(\mathbb{CP}^3\) with the torus action of $T:=(S^1)^4$ given by 
	\[[x_1:x_2:x_3:x_4]\cdot(t_1,t_2,t_3,t_4):=[t_1x_1:t_2x_2:t_3x_3:t_4x_4],\] for $[x_1:x_2:x_3:x_4]\in \mathbb{CP}^3$ and $(t_1,t_2,t_3,t_4)\in T$. The torus-fixed points  are  denoted by \(p_1, ~p_2,~ p_3,~ p_4\) and edges  are labeled by the corresponding weights \(\alpha_i - \alpha_j\). 
	A subgraph corresponding to \(\mathbb{CP}^2\) inside \(\mathbb{CP}^3\), consisting of the vertices \(p_1,~ p_2, ~p_3\), is highlighted in thick line in Figure~\ref{fig:CP2-in-CP3}.
\begin{figure}[H]
	\centering
	\begin{tikzpicture}[scale=2, every node/.style={font=\small}]
		\node[circle,fill=black,inner sep=1.5pt,label=above left:{$p_1$}] (p1) at (0,0) {};
		\node[circle,fill=black,inner sep=1.5pt,label=above right:{$p_2$}] (p2) at (1.2,0) {};
		\node[circle,fill=black,inner sep=1.5pt,label=below:{$p_3$}] (p3) at (0.6,-0.9) {};
		\node[circle,fill=black,inner sep=1.5pt,label=below right:{$p_4$}] (p4) at (1.8,-0.9) {};
		\draw[dashed ] (p1) -- node[sloped, above]{$\alpha_1 - \alpha_2$} (p2);
		\draw[dashed] (p1) -- node[sloped, below]{$\alpha_1 - \alpha_3$} (p3);
		\draw[dashed] (p1) -- node[pos=0.7,sloped, above]{$\alpha_1 - \alpha_4$} (p4);
		\draw[dashed] (p2) -- node[pos=0.54,sloped, above]{$\alpha_2 - \alpha_3$} (p3);
		\draw[dashed] (p2) -- node[sloped, above]{$\alpha_2 - \alpha_4$} (p4);
		\draw[dashed] (p3) -- node[below]{$\alpha_3 - \alpha_4$} (p4);
		\draw[ thick] (p1) -- (p2);
		\draw[ thick] (p2) -- (p3);
		\draw[ thick] (p3) -- (p1);
	\end{tikzpicture}
	\caption{GKM subgraph of $\mathbb{CP}^2$ (thick) as a subset of the GKM graph of $\mathbb{CP}^3$.}
	\label{fig:CP2-in-CP3}
\end{figure}
\end{exam}

\section{Generators of $H^*(\mathcal{GQ}_{2n-1})$}
\label{gen}
The \emph{graph equivariant cohomology} of the GKM graph $\mathcal{GQ}_{2n-1}$ is defined as follows:
\begin{equation}\label{congrel}
	H^*(\mathcal{GQ}_{2n-1})=\{f:\cv_{2n-1}\to H^{*}(BT^n) ~:~f(i)-f(j)\equiv 0~\mbox{mod}~ \alpha(ij) ~\mbox{for}~ij\in\ce_{2n-1}\}.
\end{equation}
The equation $f(i)-f(j)\equiv 0~\mbox{mod}~\alpha(ij)$ in \eqref{congrel} is often referred to as a \emph{congruence relation}. 
Note that $	H^*(\mathcal{GQ}_{2n-1})$ has a graded $H^*(BT^n)$-algebra structure induced by the graded algebra structure of $\bigoplus_{k\geq 0}\big(\bigoplus_{i\in\cv_{2n-1}} H^k(BT^n)\big)$. 
This algebraic structure is also induced by the injective homomorphism 
\begin{equation}\label{eqn3.10}
	\vartheta: H^*(BT^n)\to H^*(\mathcal{GQ}_{2n-1})
\end{equation}
such that the image of $x\in H^*(BT^n)$ (i.e., $\vartheta(x):\cv_{2n-1}\to H^*(BT^n)$) is defined by the function \[\vartheta(x)(i)=x \text{    ~~~~~for all $i\in \cv_{2n-1}$}.\]
Furthermore, $H^*(\mathcal{GQ}_{2n-1})$ also acquires an $H^*(BT^n)$-module structure  induced by the above injective homomorphism $\vartheta$. 

\begin{lema}
	For the $T^n$-action on $Q_{2n-1}$,  we have the following graded $H^*(BT^n)$-algebra isomorphism:
	\begin{equation}
		H^*_{T^n}(Q_{2n-1})\simeq H^*(\mathcal{GQ}_{2n-1}).
	\end{equation}
	 \end{lema}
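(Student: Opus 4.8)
The plan is to prove the isomorphism $H^*_{T^n}(Q_{2n-1})\simeq H^*(\mathcal{GQ}_{2n-1})$ by invoking the standard GKM localization machinery, since the geometric hypotheses needed to run it have already been assembled in the introduction and in Section~\ref{odcqigg}. The key facts established earlier are: (i) $Q_{2n-1}$ is an equivariantly formal $T^n$-manifold, i.e.\ $H^{\mathrm{odd}}(Q_{2n-1})=0$, which follows from the explicit cohomology ring $\bz[c,x]/\langle c^n-2x,x^2\rangle$ quoted in the introduction; (ii) the fixed-point set $Q_{2n-1}^{T^n}$ is finite, consisting of the $2n$ points $[e_i]$; and (iii) the set of $0$- and $1$-dimensional orbits forms a graph whose edges are labeled by the tangential representations via the axial function $\alpha$ of \eqref{axial_function_of_odd-cpx-quadrics}. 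These are precisely the GKM conditions, so the result is an instance of the Goresky--Kottwitz--MacPherson localization theorem in its combinatorial reformulation due to Guillemin--Zara.

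The main steps, in order, would be as follows. First, I would recall that equivariant formality guarantees that the restriction homomorphism
\begin{equation*}
\iota^*: H^*_{T^n}(Q_{2n-1})\to H^*_{T^n}(Q_{2n-1}^{T^n})\simeq \bigoplus_{v\in\cv_{2n-1}} H^*(BT^n)
\end{equation*}
induced by the inclusion of the fixed-point set $\iota: Q_{2n-1}^{T^n}\hookrightarrow Q_{2n-1}$ is \emph{injective}; this is the content of the localization theorem under equivariant formality. Thus $H^*_{T^n}(Q_{2n-1})$ may be identified with its image in the product ring $\bigoplus_v H^*(BT^n)$, which is exactly the ring of tuples $(f(v))_{v\in\cv_{2n-1}}$. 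Second, I would identify the image of $\iota^*$. The GKM theorem states that a tuple $(f(v))_v$ lies in the image precisely when, for each $T^n$-invariant $2$-sphere with fixed points $p,q$, the difference $f(p)-f(q)$ is divisible by the weight $\alpha(pq)$ of the tangential representation along that sphere. Since in Section~\ref{sect:2.1} the invariant $2$-spheres are exactly the edges $pq\in\ce_{2n-1}$ and their weights are exactly the values $\alpha(pq)$ of the axial function, this divisibility condition is verbatim the congruence relation $f(i)-f(j)\equiv 0 \bmod \alpha(ij)$ defining $H^*(\mathcal{GQ}_{2n-1})$ in \eqref{congrel}. Third, I would check that the identification respects the algebra structure: both the cup product on $H^*_{T^n}(Q_{2n-1})$ and the $H^*(BT^n)$-module structure via $\vartheta$ are computed componentwise in the localized description, and $\iota^*$ is a ring homomorphism intertwining them, so the bijection on underlying sets upgrades to a graded $H^*(BT^n)$-algebra isomorphism.

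The heart of the argument, and the only nontrivial verification, is justifying that the GKM divisibility conditions are not merely \emph{necessary} for a tuple to lie in $\im\iota^*$ (which is easy, coming from restriction to each invariant sphere and the structure of $H^*_{T^n}(S^2)$) but also \emph{sufficient}, i.e.\ that $\im\iota^*$ equals the full congruence ring. This surjectivity onto the GKM ring is where the genuine content of the Goresky--Kottwitz--MacPherson theorem lies; its proof uses the Atiyah--Bott--Berline--Vergne localization formula together with the equivariant formality to compare ranks, or equivalently a Mayer--Vietoris / spectral-sequence argument over the $1$-skeleton of the orbit space. Since this is a standard theorem rather than something specific to quadrics, I expect the author's proof to simply cite it (for example \cite{gkm} or \cite{gz01}) after verifying hypotheses (i)--(iii); accordingly, the expected obstacle is not a computation but the bookkeeping that our axial function $\alpha$ genuinely records the isotropy weights at each fixed point, which was already carried out in the tangential-representation computation of Section~\ref{sect:2.1}. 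Hence the proof reduces to assembling these ingredients and applying the cited localization theorem.
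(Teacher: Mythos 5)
Your overall strategy --- equivariant formality, injectivity of the restriction $\iota^*$ to the fixed-point set, and identification of $\im \iota^*$ with the congruence ring --- is the same localization argument the paper relies on, but as written it only establishes the statement with $\mathbb{Q}$ coefficients, whereas the lemma (and the entire paper) concerns the \emph{integral} ring $H^*(\mathcal{GQ}_{2n-1})\subset\bigoplus_{v\in\cv_{2n-1}}\bz[\alpha_1,\ldots,\alpha_n]$. The theorems you propose to cite, \cite{gkm} and \cite{gz01}, identify the image of $\iota^*$ with the GKM congruence ring over a field of characteristic zero; the rank comparison via Atiyah--Bott--Berline--Vergne that you describe as the heart of the argument does not control torsion or divisibility over $\bz$. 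This is not a pedantic point here: the presentation obtained in Theorem~\ref{mainth} hinges on relations such as $M_v+M_{\overline{v}}=2Q$ and $2\Delta_K=\prod_{i\in K^c}M_i$, which carry no information after tensoring with $\mathbb{Q}$.

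The missing ingredient is the hypothesis that makes the Chang--Skjelbred argument work integrally. The paper's proof checks that every isotropy subgroup of the effective $T^n$-action on $Q_{2n-1}$ is connected and then invokes Franz--Puppe \cite{fp07}, which yields the exact integral Chang--Skjelbred sequence under that hypothesis together with $H^{odd}(Q_{2n-1})=0$. That connectedness genuinely has to be verified and is not automatic: for the restricted non-effective action on $Q_{2n-2}$ treated in Section~\ref{sect:7}, isotropy groups contain a copy of $\bz_2$, and the paper must instead appeal to the finer criterion of \cite{fp11}. Your argument becomes correct once you replace the citation of the rational GKM theorem by the verification of connected isotropy groups and the appeal to \cite{fp07}.
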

	 \begin{proof}
It is straightforward to verify that all isotropy subgroups are connected for the effective $T^n$-action on $Q_{2n-1}$. 
Furthermore, since the odd-cohomologies of $Q_{2n-1}$ vanish (see Section~\ref{sect:6}), by \cite{fp07}, this statement holds.
	 \end{proof}
	 
Therefore, to compute the $T^n$-equivariant cohomology ring $H^*_{T^n}(Q_{2n-1})$ of $Q_{2n-1}$, 
it sufficies to compute the graph equivariant cohomology $H^*(\mathcal{GQ}_{2n-1})$. 
In this section, we introduce three types of elements of $H^*(\mathcal{GQ}_{2n-1})$, denoted $M_{v}$, $Q$ and $\Delta_{K}$ which serves as its generators (see Section \ref{secmainth}), and describe the ring structure of $H^*(\mathcal{GQ}_{2n-1})$ in terms of these generators and relations based on the combinatorial data of $\mathcal{GQ}_{2n-1}$.

\subsection{Degree $2$ generators}
\label{sect:3.1}
In this subsection, we will define two types of degree two {elements}, called $M_{v}$ for $v\in \mathcal{V}_{2n-1}$ and $Q$.   Later, in Lemma~\ref{surjmainth}, we prove that these elements together with those defined in Section~\ref{sec:high_deg_gen} will  generate $H^{*}(\mathcal{GQ}_{2n-1})$.

\begin{defe}\label{M_v}
Let $v\in \cv_{2n-1}$ be a vertex. 
We define the function $M_v:\cv_{2n-1}\to H^2(BT^n)$ by 
	\begin{equation}\label{mapmv}
		M_v(i)=\begin{cases}
		0 & \text{ if}~~i=v;\\
		2\alpha_{v}& \text{ if}~~i=\overline{v};\\
		\alpha(iv)=-\alpha_i+\alpha_v& \text{ if}~~i\not=v, \overline{v};
	\end{cases}
\end{equation}
where $\alpha_{i}=-\alpha_{\overline{i}}$ for $i\in \{n+1,\ldots, 2n\}$ (see \eqref{label_opposite_vertex}).
\end{defe}

\begin{propo}
For every $v\in\cv_{2n-1}$, $M_v\in H^2(\mathcal{GQ}_{2n-1})$.
\end{propo}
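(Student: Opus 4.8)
The plan is to verify directly that the function $M_v$ defined by \eqref{mapmv} satisfies the congruence relation appearing in the definition \eqref{congrel} of $H^*(\mathcal{GQ}_{2n-1})$. Since $M_v$ manifestly takes values in $H^2(BT^n)$ at every vertex, the only thing to check is that for each edge $pq \in \mathcal{E}_{2n-1}$ we have $M_v(p) - M_v(q) \equiv 0 \pmod{\alpha(pq)}$. As $\Gamma_{2n-1}$ is the complete graph on the vertex set $\{1,\ldots,n,\overline{n},\ldots,\overline{1}\}$, this amounts to a finite case analysis over all pairs of distinct vertices, organized according to whether each of $p,q$ equals $v$, equals $\overline{v}$, or is one of the remaining vertices.

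First I would reduce the number of cases by symmetry: the congruence is symmetric in $p$ and $q$ because $\alpha(pq) = -\alpha(qp)$, so $M_v(p)-M_v(q)$ and $M_v(q)-M_v(p)$ are simultaneously divisible by the edge label. This lets me fix, say, the role of the ``smaller'' of the two vertices and enumerate the essential configurations. The principal cases are: (i) the edge $vj$ for $j \neq v,\overline{v}$, where $M_v(v)-M_v(j) = 0 - (-\alpha_j+\alpha_v) = \alpha(jv)$, which is exactly $\pm\alpha(vj)$ and hence divisible; (ii) the edge $v\overline{v}$, where $M_v(v)-M_v(\overline{v}) = -2\alpha_v$, while $\alpha(v\overline{v}) = -\alpha_v$ by the axial function, so divisibility by $\alpha_v$ is clear; (iii) the edge $\overline{v}j$ for $j \neq v,\overline{v}$, where $M_v(\overline{v})-M_v(j) = 2\alpha_v - (-\alpha_j+\alpha_v) = \alpha_v + \alpha_j$, which must be compared against $\alpha(\overline{v}j) = \alpha_v + \alpha_j$ (using $\alpha_{\overline{v}} = -\alpha_v$); and (iv) the generic edge $ij$ with $i,j \notin \{v,\overline{v}\}$, where $M_v(i)-M_v(j) = (-\alpha_i+\alpha_v)-(-\alpha_j+\alpha_v) = -\alpha_i+\alpha_j = \alpha(ij)$.

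The satisfying feature is that in every case the difference $M_v(p)-M_v(q)$ turns out to equal $\pm\alpha(pq)$ exactly, not merely a multiple of it; this is because the values of $M_v$ off the diagonal are essentially built from the axial labels themselves. The one case demanding genuine care is case (iii), the edge joining $\overline{v}$ to a generic vertex $j$: here I must confirm that the extracted value $\alpha_v + \alpha_j$ really coincides with the axial label $\alpha(\overline{v}j)$, which requires using the sign convention \eqref{label_opposite_vertex} that $\alpha_i = -\alpha_{\overline{i}}$ for indices beyond $n$, together with the explicit assignments listed after \eqref{axial_function_of_odd-cpx-quadrics}. Keeping the bookkeeping of overlined versus non-overlined indices straight throughout this case is the main (and only) obstacle; once that sign convention is applied consistently, each congruence holds on the nose, completing the verification that $M_v \in H^2(\mathcal{GQ}_{2n-1})$.
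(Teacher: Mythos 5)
Your proof is correct and follows essentially the same route as the paper: a direct case-by-case verification of the congruence relation over the four edge types $v\overline{v}$, $vj$, $\overline{v}j$, and generic $ij$, with the same computations in each case. One small inaccuracy in your commentary: the difference does \emph{not} equal $\pm\alpha(pq)$ in every case --- on the edge $v\overline{v}$ it is $-2\alpha_v = 2\alpha(v\overline{v})$, a proper multiple --- but since you verified divisibility there directly, this does not affect the argument.
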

	 
\begin{proof}
To show this, it is enough	to check that $M_v$ satisfies the congruence relations \eqref{congrel} for every edge $ij\in\ce_{2n-1}$. 
We verify the above condition on a case-by-case basis:
\begin{enumerate}[(a)]
\item \underline{If $i=v$ and $j=\overline{v}$}: 
Since $\alpha(v\overline{v})=-\alpha_v$, we have	\[M_v(i)-M_v(j)=-2\alpha_v\equiv 0~~\mbox{mod}~~\alpha(ij).\] 
\item \underline{If $i=v$ and $j\not=v,\overline{v}$}:  
We have	$M_v(i)-M_v(j)=-\alpha(jv)
	 	\equiv 0~~\mbox{mod}~~\alpha(ij)$. 
\item \underline{If $i=\overline{v}$ and $j\not=v,\overline{v}$}:
		We have that \[M_v(i)-M_v(j)=2\alpha_v-\alpha(jv)
		=\alpha_v+\alpha_j\equiv 0~~\mbox{mod}~~\alpha(ij).\] 
\item \underline{If $i\not=v,\overline{v}$ and $j\not=v,\overline{v}$}:    
We have \[M_v(i)-M_v(j)=(-\alpha_i+\alpha_v)-(-\alpha_j+\alpha_v)=-\alpha_i+\alpha_j\equiv 0~~\mbox{mod}~~\alpha(ij).\] 
\end{enumerate}
Therefore, $M_v\in H^2(\mathcal{GQ}_{2n-1})$ for all $v\in\cv_{2n-1}$.
\end{proof}


\begin{exam} 
For $n=2$ (Figure~\ref{lowest-example}), the following figure shows the element $M_4\in H^{2}(\mathcal{GQ}_{3})$. 
This may be regarded as the full subgraph spanned by the vertices $1,2,3\in \mathcal{V}_{3}$ in $\Gamma_{3}$. 
Note that $M_{v}(j)$ for $j\not=v,\overline{v}$ coincides with the normal axial function of this full subgraph. 
Moreover, we can easily check that $M_{v}(\overline{v})$ is the unique element that  satisfies the congruence relations with the other $M_{v}(j)$'s (cf. \cite[Proposition 3.5]{ku23}).

\begin{figure}[H]
\begin{tikzpicture}
\begin{scope}[xscale=0.5, yscale=0.5]
\coordinate (1) at (-3,-3);
\coordinate (2) at (-3,3);
\coordinate (3) at (3,3);
\coordinate (4) at (3,-3);

\fill(1) circle (5pt);
\node[left] at (1) {$M_{4}(1)=-2\alpha_{1}$};
\fill(2) circle (5pt);
\node[left] at (2) {$M_{4}(2)=-\alpha_{2}+\alpha_{4}=-\alpha_{2}-\alpha_{1}$};
\fill(3) circle (5pt);
\node[right] at (3) {$M_{4}(4)=0$}; 
\fill(4) circle (5pt);
\node[right] at (4) {$M_{4}(3)=-\alpha_{3}+\alpha_{4}=\alpha_{2}-\alpha_{1}$}; 

\draw[thick](1)--(2);

\draw[dashed](1)--(3);

\draw[thick](1)--(4);

\draw[dashed](3)--(2);

\draw[thick](2)--(4);

\draw[dashed](3)--(4);

\end{scope}
\end{tikzpicture}
\caption{The element $M_{4}\in H^{2}(\mathcal{GQ}_{3})$.}
\label{M_4_of_lowest-example}
\end{figure}
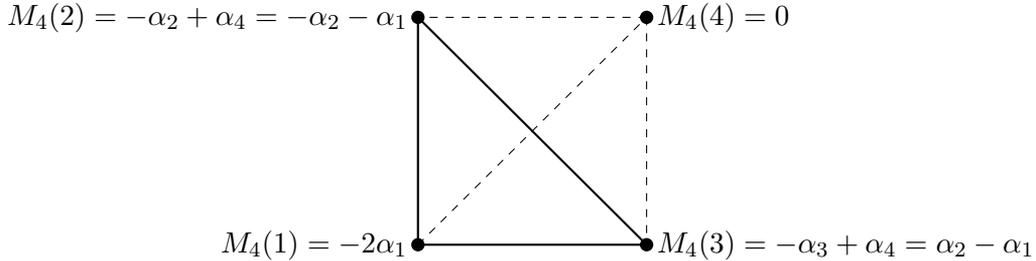
\end{exam}

\begin{exam} 
By using the table, we can represent all $M_v$'s at once.
For example, for $n=3$, the following table 
represents $M_v$ for all $v\in\cv_5$.
\begin{center}
	\begin{tabular}{|c|c|c|c|c|c|c|}
		\hline
		&1&2&3&$4=\overline{3}$&$5=\overline{2}$&$6=\overline{1}$\\
		\hline
		$M_1$&0 &$\alpha_1-\alpha_2$ &$\alpha_1-\alpha_3$ &$\alpha_1+\alpha_3$ &$\alpha_1+\alpha_2$ &$2\alpha_1$\\
		\hline
		$M_2$&$\alpha_2-\alpha_1$&0&$\alpha_2-\alpha_3$&$\alpha_2+\alpha_3$&$2\alpha_2$&$\alpha_2+\alpha_1$\\
		\hline
		$M_3$&$\alpha_3-\alpha_1$ &$\alpha_3-\alpha_2$ &0 &$2\alpha_3$ &$\alpha_3+\alpha_2$ &$\alpha_3+\alpha_1$\\
		\hline
		$M_{\overline{3}}$ &$-\alpha_3-\alpha_1$ &$-\alpha_3-\alpha_2$ &$-2\alpha_3$ &0&$-\alpha_3+\alpha_2$ &$-\alpha_3+\alpha_1$\\
		\hline
		$M_{\overline{2}}$ &$-\alpha_2-\alpha_1$ &$-2\alpha_2$ &$-\alpha_2-\alpha_3$&$-\alpha_2+\alpha_3$ &0&$-\alpha_2+\alpha_1$\\	
		\hline
		$M_{\overline{1}}$	&$-2\alpha_1$&$-\alpha_1-\alpha_2$ &$-\alpha_1-\alpha_3$ &$-\alpha_1+\alpha_3$ &$-\alpha_1+\alpha_2$ &$0$\\
		\hline
		
	\end{tabular}
\end{center}
\end{exam}

Furthermore, we define another degree 2 generator.
\begin{defe}\label{generator_G}
	We define the function $Q:\cv_{2n-1}\to H^2(BT^n)$ by 
\begin{equation}\label{gen_G}
		Q(j)=\alpha(j\overline{j})=-\alpha_j ~~\text{for}~~ j\in\cv_{2n-1}.
\end{equation}		
\end{defe}

\begin{propo}
	The function $Q$ is an element of $H^2(\mathcal{GQ}_{2n-1})$.
\end{propo}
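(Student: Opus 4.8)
The plan is to verify that the function $Q$ satisfies the congruence relations \eqref{congrel} for every edge $ij\in\ce_{2n-1}$, exactly as was done for $M_v$ in the preceding proposition. Since $Q$ is defined uniformly by $Q(j)=-\alpha_j$ on all vertices (using the convention \eqref{label_opposite_vertex}), the check amounts to showing that for each edge $ij$, the difference $Q(i)-Q(j)=-\alpha_i+\alpha_j$ is divisible by the axial function $\alpha(ij)$ in $H^*(BT^n)\simeq \bz[\alpha_1,\ldots,\alpha_n]$.

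First I would split the verification into the same case types used in the definition of the axial function \eqref{axial_function_of_odd-cpx-quadrics}. For an edge $ij$ with $j\neq i,\overline{i}$, the axial function is $\alpha(ij)=-\alpha_i+\alpha_j$, and so $Q(i)-Q(j)=-\alpha_i+\alpha_j=\alpha(ij)$, which is trivially $\equiv 0 \bmod \alpha(ij)$. For the remaining case, the edge $i\overline{i}$, we have $\alpha(i\overline{i})=-\alpha_i$, and using $\alpha_{\overline{i}}=-\alpha_i$ we compute $Q(i)-Q(\overline{i})=-\alpha_i-\alpha_{\overline{i}}=-\alpha_i+\alpha_i=0$, which is certainly divisible by $\alpha(i\overline{i})$. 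These two cases exhaust all edges of $\Gamma_{2n-1}$, so the congruence relations hold throughout.

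Because $Q(i)-Q(j)$ turns out to equal $\alpha(ij)$ on ordinary edges and $0$ on the antipodal edges $i\overline{i}$, this is essentially immediate and there is no genuine obstacle; the only point requiring care is the bookkeeping with the convention $\alpha_i=-\alpha_{\overline{i}}$ for $i\in\{n+1,\ldots,2n\}$, which one must apply consistently so that the definition $Q(j)=-\alpha_j$ makes sense on the barred vertices and yields the cancellation on the antipodal edges. Having confirmed the congruence relations, I conclude that $Q\in H^2(\mathcal{GQ}_{2n-1})$, completing the proof.
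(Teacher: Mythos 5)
Your proof takes essentially the same route as the paper's one-line verification: check the congruence relation $Q(i)-Q(j)\equiv 0 \bmod \alpha(ij)$ edge by edge, observing that on non-antipodal edges the difference literally equals $\alpha(ij)$. One small sign slip in the antipodal case: the difference is $Q(i)-Q(\overline{i})=-\alpha_i+\alpha_{\overline{i}}=-2\alpha_i$, not $0$, but since $\alpha(i\overline{i})=-\alpha_i$ this is still congruent to $0$, so the conclusion is unaffected.
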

\begin{proof}
	The result follows since, for any edge $ij\in\ce_{2n-1}$, we have the following:
	\[Q(i)-Q(j)=-\alpha_i-(-\alpha_j)\equiv 0~~\mbox{mod}~~\alpha(ij).\]
\end{proof}

\begin{rema}
\label{rem_submanifold_evem-dim-quadric}
Let $\mathcal{GQ}_{2n-2}$ be the GKM subgraph of $\mathcal{GQ}_{2n-1}$ that has the same set of vertices $\cv_{2n-1}=[2n]$ and includes all edges of $\mathcal{GQ}_{2n-1}$ except \[\{v\overline{v}\in\ce_{2n-1}~:~v\in\cv_{2n-1}\}.\] 
Combinatorially, $Q$ is the \emph{equivariant Thom class} of $\mathcal{GQ}_{2n-2}$, i.e., the normal axial functions of the GKM subgraph (see \cite[Section 4]{mmp07} for more details).
Geometrically, this is nothing but the equivariant Thom class of the $T^{n}$-invariant submanifold $Q_{2n-2}\subset Q_{2n-1}$ defined by $z_{2n+1}=0$. In Section~\ref{sect:7}, we will study $\mathcal{GQ}_{2n-2}$ more precisely.
\end{rema}

\begin{exam} 
For $n=2$, the following example represents $Q:\cv_3\to H^2(BT^2)$. 
This function $Q$ is defined by the normal axial functions of the GKM subgraph $\mathcal{GQ}_2$ of $\mathcal{GQ}_3$, i.e., the equivariant Thom class of the GKM subgraph $\mathcal{GQ}_2$.

\begin{figure}[H]
\begin{tikzpicture}
\begin{scope}[xscale=0.5, yscale=0.5]
\coordinate (1) at (-3,-3);
\coordinate (2) at (-3,3);
\coordinate (3) at (3,3);
\coordinate (4) at (3,-3);

\fill(1) circle (5pt);
\node[left] at (1) {$Q(1)=-\alpha_{1}$};
\fill(2) circle (5pt);
\node[left] at (2) {$Q(2)=-\alpha_{2}$};
\fill(3) circle (5pt);
\node[right] at (3) {$Q(4)=\alpha_{2}$}; 
\fill(4) circle (5pt);
\node[right] at (4) {$Q(3)=\alpha_{2}$}; 

\draw[thick](1)--(2);

\draw[dashed](1)--(3);

\draw[thick](1)--(4);

\draw[thick](3)--(2);

\draw[dashed](2)--(4);

\draw[thick](3)--(4);

\end{scope}
\end{tikzpicture}
\caption{The element $Q\in H^{2}(\mathcal{GQ}_{3})$.}
\label{G_of_lowest-example}
\end{figure}
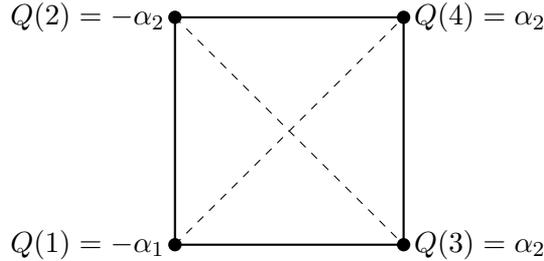
\end{exam}

\begin{exam} For $n=3$, the following table shows $Q:\cv_5\to H^2(BT^3)$. 
This also represents the Thom class of the GKM subgraph $\mathcal{GQ}_4$ of $\mathcal{GQ}_5$ (see Figure~\ref{Thom_class_L5}).
	\begin{center}
		\begin{tabular}{|c|c|c|c|c|c|c|}
			\hline

			&1&2&3&$4=\overline{3}$&$5=\overline{2}$&$6=\overline{1}$\\
			\hline
			$Q$&$-\alpha_1$&$-\alpha_2$&$-\alpha_3$&$\alpha_3$&$\alpha_2$&$\alpha_1$\\
			\hline
		\end{tabular}
	\end{center}
	
	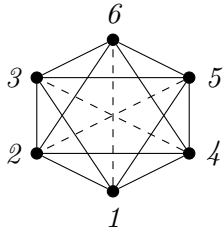
\begin{figure}[hh]
		\centering
		\begin{tikzpicture}[row sep=small, column sep=small]
			\node[draw, circle, fill=black, inner sep=1.5pt, label=right:5] (A) at (1, 0.5) {};
			\node[draw, circle, fill=black, inner sep=1.5pt, label=right:4] (B) at (1,-0.5) {};
			\node[draw, circle, fill=black, inner sep=1.5pt, label=left:3] (C) at (-1, 0.5) {};
			\node[draw, circle, fill=black, inner sep=1.5pt, label=left:2] (D) at (-1, -0.5) {};
			\node[draw, circle, fill=black, inner sep=1.5pt, label=6] (E) at (0, 1) {};	
			\node[draw, circle, fill=black, inner sep=1.5pt, label=below:1] (F) at (0, -1) {};		
			\draw (A) -- node[midway, below, sloped]  {} (B);
			\draw (B) -- node[midway, below right, sloped] {} (D);
			\draw (C) -- node[midway, below left, sloped] {} (A);
			\draw (A) -- node[midway, below, sloped]  {} (E);
			\draw (A) -- node[midway, below, sloped]  {} (F);
			\draw (B) -- node[midway, below, sloped]  {} (F);
			\draw (B) -- node[midway, below, sloped]  {} (E);
			\draw (F) -- node[midway, below, sloped]  {} (D);
			\draw (F) -- node[midway, below, sloped]  {} (C);
			\draw (D) -- node[midway, below, sloped]  {} (C);
			\draw (C) -- node[midway, below, sloped]  {} (E);
			\draw (D) -- node[midway, below, sloped]  {} (E);
			\draw [dashed](F) -- node[midway, below, sloped]  {} (E);
			\draw[dashed] (B) -- node[midway, below, sloped]  {} (C);
			\draw[dashed] (A) -- node[midway, below, sloped]  {} (D);
		\end{tikzpicture}
		\caption{The subgraph as above represents the GKM subgraph $\mathcal{GQ}_{4}\subset \mathcal{GQ}_5$.}\label{Thom_class_L5}
	\end{figure}
\end{exam}

\subsection{Some properties for degree 2 elements  $M_v$ and $Q$}
In this subsection, we will prove Proposition~\ref{H(BT)-generators}.
To do this, we will prepare two lemmas.

\begin{lema}
\label{rel-M_v-and-Q}
	For any $v\in\cv_{2n-1}$, 
	the following equality holds:
	\begin{equation}\label{3.11}
		M_v+M_{\overline{v}}=2Q.
	\end{equation}
	\end{lema}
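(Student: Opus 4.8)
The plan is to prove the identity \eqref{3.11} by reading both sides as functions $\cv_{2n-1}\to H^2(BT^n)$ and checking that they agree at every vertex $j\in\cv_{2n-1}$. Since, by definition \eqref{congrel}, an element of $H^*(\mathcal{GQ}_{2n-1})$ is a function on the vertices, two such elements coincide exactly when their values agree vertexwise; hence no congruence relation needs to be re-examined, and the whole argument reduces to a short finite computation once the addition and scalar multiplication are understood pointwise.

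First I would record two elementary facts from the conventions in \eqref{label_opposite_vertex}: the involution $v\mapsto\overline{v}$ satisfies $\overline{\overline{v}}=v$, and the labels satisfy $\alpha_{\overline{v}}=-\alpha_v$. These let me re-express $M_{\overline{v}}$ by applying Definition~\ref{M_v} with $w:=\overline{v}$ in place of $v$, so that the two distinguished vertices of $M_{\overline{v}}$ are $w=\overline{v}$, where the value is $0$, and $\overline{w}=v$, where the value is $2\alpha_{\overline{v}}=-2\alpha_v$.

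Next I would split into the three cases dictated by Definition~\ref{M_v}, namely $j=v$, $j=\overline{v}$, and $j\neq v,\overline{v}$. In the case $j=v$ one has $M_v(v)=0$, while $j=v=\overline{w}$ lands in the second branch of $M_{\overline{v}}$, giving $M_{\overline{v}}(v)=2\alpha_{\overline{v}}=-2\alpha_v$. The case $j=\overline{v}$ is symmetric, with the roles of the two functions reversed. In the generic case both values take the form $-\alpha_j+\alpha_{(\cdot)}$, namely $M_v(j)=-\alpha_j+\alpha_v$ and $M_{\overline{v}}(j)=-\alpha_j+\alpha_{\overline{v}}=-\alpha_j-\alpha_v$, so the $\alpha_v$ terms cancel. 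In every case the sum equals $-2\alpha_j$, which is precisely $2Q(j)$ by Definition~\ref{generator_G}.

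There is no genuine obstacle in this lemma; the only point demanding a little care is the bookkeeping of the involution, namely remembering that evaluating $M_{\overline{v}}$ at the vertex $v$ falls into the $\overline{w}$-branch rather than the $w$-branch of Definition~\ref{M_v}, together with the consistent substitution $\alpha_{\overline{v}}=-\alpha_v$. Once these substitutions are carried out, the three case computations are immediate and yield $M_v+M_{\overline{v}}=2Q$.
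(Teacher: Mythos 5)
Your proof is correct and follows essentially the same route as the paper: a vertexwise evaluation of $M_v+M_{\overline{v}}$ in the three cases $j=v$, $j=\overline{v}$, and $j\neq v,\overline{v}$, using $\alpha_{\overline{v}}=-\alpha_v$ to see that each value equals $-2\alpha_j=2Q(j)$. No discrepancies with the paper's argument.
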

	\begin{proof}
Let $w\in \cv_{2n-1}$ be a vertex of $\mathcal{GQ}_{2n-1}$. 
We now verify the result by evaluating \eqref{3.11} for each $w\in \cv_{2n-1}$.
		\begin{enumerate}[(a)]
			\item \underline{If $w=v$}: $\big(M_v+M_{\overline{v}}\big)(w)=0+2\alpha_{\overline{v}}=-2\alpha_v=2Q(w).$
			\item \underline{If $w=\overline{v}$}: $\big(M_v+M_{\overline{v}}\big)(w)=2\alpha_{v}+0=-2\alpha_{\overline{v}}=2Q(w).$
		\item \underline{If $w\not=v,\overline{v}$}: $\big(M_v+M_{\overline{v}}\big)(w)=-\alpha_w+\alpha_v+(-\alpha_w+\alpha_{\overline{v}})=-2\alpha_{w}=2Q(w)$.
		\end{enumerate}
	\end{proof}

\begin{lema}
\label{2alpha_generating}
	For any $v\in\cv_{2n-1}$, 	the following equality holds:
		\begin{equation}\label{3.12} 
		M_v-M_{\overline{v}}=2\alpha_v.
	\end{equation}
	\end{lema}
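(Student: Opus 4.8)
The plan is to prove the identity $M_v - M_{\overline{v}} = 2\alpha_v$ by the same elementary method used for Lemma~\ref{rel-M_v-and-Q}: evaluate both sides at an arbitrary vertex $w \in \cv_{2n-1}$ and check agreement in each of the finitely many combinatorial cases. Since both $M_v$ and $M_{\overline{v}}$ are already known to lie in $H^2(\mathcal{GQ}_{2n-1})$, and $2\alpha_v$ denotes the constant function $\vartheta(2\alpha_v)$ (which is automatically in the graph cohomology), there is no congruence-relation condition left to verify; the entire content is the pointwise equality.

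First I would fix $v$ and recall that $\overline{\overline{v}} = v$, together with the labelling convention \eqref{label_opposite_vertex} that $\alpha_{\overline{v}} = -\alpha_v$. The natural case division mirrors the definition \eqref{mapmv} of $M_v$: namely $w = v$, then $w = \overline{v}$, and finally $w \neq v, \overline{v}$. In the first case $M_v(v) = 0$ while $M_{\overline{v}}(v) = 2\alpha_{\overline{v}} = -2\alpha_v$ (since $v$ is the opposite vertex of $\overline{v}$), so the difference is $0 - (-2\alpha_v) = 2\alpha_v$. In the second case $M_v(\overline{v}) = 2\alpha_v$ and $M_{\overline{v}}(\overline{v}) = 0$, giving $2\alpha_v$ directly. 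In the remaining case both values are of the ``generic'' type, so
\begin{align*}
\big(M_v - M_{\overline{v}}\big)(w) = (-\alpha_w + \alpha_v) - (-\alpha_w + \alpha_{\overline{v}}) = \alpha_v - \alpha_{\overline{v}} = \alpha_v - (-\alpha_v) = 2\alpha_v.
\end{align*}
Since all three cases yield the constant $2\alpha_v$, the two functions agree at every vertex and the identity follows.

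There is essentially no obstacle here: the statement is a routine verification on a complete graph whose axial function is given by an explicit closed formula, and the only point requiring care is the consistent use of the convention $\alpha_{\overline{v}} = -\alpha_v$ and the fact that the generic-case formula $M_v(w) = -\alpha_w + \alpha_v$ applies verbatim to $M_{\overline{v}}(w)$ with $v$ replaced by $\overline{v}$. One could alternatively derive the result instantly from Lemma~\ref{rel-M_v-and-Q}: adding $M_v + M_{\overline{v}} = 2Q$ and the desired $M_v - M_{\overline{v}} = 2\alpha_v$ would give $M_v = Q + \alpha_v$, which is easy to check, but the direct case check is cleaner and self-contained, so that is the route I would present.
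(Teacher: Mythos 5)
Your proof is correct and follows exactly the same route as the paper: a pointwise verification at an arbitrary vertex $w$, split into the three cases $w=v$, $w=\overline{v}$, and $w\neq v,\overline{v}$, using the convention $\alpha_{\overline{v}}=-\alpha_v$. No issues.
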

	\begin{proof}  We verify the result by evaluating \eqref{3.12} for each $w\in \cv_{2n-1}$.
		\begin{enumerate}
			\item \underline{If $w=v$}: $\big(M_v-M_{\overline{v}}\big)(w)=-2\alpha_{\overline{v}}=2\alpha_v.$	
			\item \underline{If $w=\overline{v}$}: $\big(M_v-M_{\overline{v}}\big)(w)=2\alpha_{v}.$
			\item \underline{If $w\not=v,\overline{v}$}: $\big(M_v-M_{\overline{v}}\big)(w)=-\alpha_w+\alpha_v-(-\alpha_w+\alpha_{\overline{v}})=2\alpha_{v}.$
		\end{enumerate}
	\end{proof}

\begin{propo}
\label{H(BT)-generators}
		The generator $\alpha_i\in H^*(BT^n)$, for $i=1,\ldots,n$, is obtained by the following equality:
		\begin{equation}\label{3.13}
			\alpha_i=M_i-Q.
		\end{equation}
\end{propo}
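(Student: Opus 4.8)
The plan is to prove the identity $\alpha_i = M_i - Q$ pointwise, by evaluating both sides at each vertex $w \in \cv_{2n-1}$ and checking that they agree as elements of $H^2(BT^n)$. Since $H^2(\mathcal{GQ}_{2n-1})$ is by definition a subset of functions $\cv_{2n-1} \to H^2(BT^n)$, two such elements are equal precisely when they take the same value at every vertex. Here $\alpha_i$ on the left-hand side should be read as $\vartheta(\alpha_i)$, the image of $\alpha_i \in H^2(BT^n)$ under the structure homomorphism $\vartheta$; that is, the constant function with value $\alpha_i$ at every vertex. Thus the whole claim reduces to verifying, for each $w$, the scalar equality $M_i(w) - Q(w) = \alpha_i$.

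First I would recall the defining formulas: $Q(w) = -\alpha_w$ for all $w$ (Definition~\ref{generator_G}), and $M_i(w)$ is given by the three cases in Definition~\ref{M_v} (namely $0$ if $w=i$, $2\alpha_i$ if $w=\overline{i}$, and $-\alpha_w + \alpha_i$ otherwise). The verification then splits into exactly the same three cases used in the proofs of Lemmas~\ref{rel-M_v-and-Q} and \ref{2alpha_generating}. When $w=i$, one gets $M_i(w)-Q(w) = 0 - (-\alpha_i) = \alpha_i$. When $w = \overline{i}$, using the convention \eqref{label_opposite_vertex} that $\alpha_{\overline{i}} = -\alpha_i$, one gets $2\alpha_i - (-\alpha_{\overline{i}}) = 2\alpha_i - \alpha_i = \alpha_i$. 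In the generic case $w \neq i, \overline{i}$, one gets $(-\alpha_w + \alpha_i) - (-\alpha_w) = \alpha_i$. In all three cases the answer is the constant $\alpha_i$, which is exactly the value of $\vartheta(\alpha_i)$ at $w$.

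A cleaner alternative that avoids re-doing the case analysis is to derive the identity directly from the two lemmas already proved. Adding \eqref{3.11} and \eqref{3.12} gives $2M_i = 2Q + 2\alpha_i$, and since $H^*(\mathcal{GQ}_{2n-1})$ embeds in a free $\bz$-module (it sits inside $\bigoplus_{v} H^*(BT^n)$, which is torsion-free), we may cancel the factor of $2$ to conclude $M_i = Q + \alpha_i$, i.e.\ \eqref{3.13}. I would likely present this derivation, since it reuses Lemmas~\ref{rel-M_v-and-Q} and \ref{2alpha_generating} cleanly and makes the role of those lemmas transparent.

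There is no serious obstacle here; the statement is essentially a bookkeeping identity. The only point requiring a word of care is the torsion-freeness needed to cancel the $2$ in the second approach — this is immediate because the ambient module $\bigoplus_{v \in \cv_{2n-1}} H^*(BT^n)$ is a direct sum of copies of a polynomial ring over $\bz$ and hence has no $\bz$-torsion, so $2f = 2g$ forces $f = g$. The minor subtlety worth flagging explicitly is the identification of $\alpha_i$ on the left-hand side with the constant function $\vartheta(\alpha_i)$, ensuring the equation is read as an identity in $H^2(\mathcal{GQ}_{2n-1})$ rather than a mismatch of types.
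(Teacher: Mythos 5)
Your proposal is correct, and your ``cleaner alternative'' is exactly the paper's own argument: the proposition is deduced by combining \eqref{3.11} and \eqref{3.12}, i.e.\ adding $M_v+M_{\overline{v}}=2Q$ and $M_v-M_{\overline{v}}=2\alpha_v$ and cancelling the factor of $2$ (legitimate since everything embeds in the torsion-free module $\bigoplus_{v}H^*(BT^n)$). The direct case-by-case verification you sketch first is an equally valid but redundant alternative.
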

	\begin{proof}
		The proposition follows from \eqref{3.11} and \eqref{3.12}.
	\end{proof}

\subsection{Higher degree generators}\label{sec:high_deg_gen}

In this subsection, we define a degree-$2m$ element $\Delta_K \in H^{2m}(\mathcal{GQ}_{2n-1})$ for some $K \subset \cv_{2n-1}$ with $|K| = 2n - m$. Later, in Lemma~\ref{surjmainth}, we prove that these elements, together with $Q$ and $M_v$ for $v \in \cv_{2n-1}$, generate $H^{*}(\mathcal{GQ}_{2n-1})$.

\begin{defe}\label{Delta_k}
	Let $K\subset \cv_{2n-1}=[2n]$ be a non-empty subset such that 
	$\{i, \overline{i}\}\not\subset K$ for every $i\in \mathcal{V}_{2n-1}$.
	We define the function $\Delta_K :\cv_{2n-1}\to H^{4n-2|K|}(BT^n)$ by
	\begin{equation}\label{delta_keqn}
		\Delta_K(j)=\begin{cases}
			\prod_{k\notin K}\alpha(jk) &~~\text{if}~~ j\in K,\\
			0 &~~\text{if}~~j\notin K.
		\end{cases}
	\end{equation}
Note that, by definition, $\Delta_{\emptyset}=0$.
\end{defe} 

\begin{lema}
	If $K\subset\cv_{2n-1}$ is a non-empty subset such that $\{i, \overline{i}\}\not\subset K$ for every $i\in \mathcal{V}_{2n-1}$, then $\Delta_K\in H^{4n-2|K|}(\mathcal{GQ}_{2n-1})$.
	\end{lema}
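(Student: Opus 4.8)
The plan is to verify directly that $\Delta_K$ satisfies the congruence relation \eqref{congrel}, i.e.\ that $\Delta_K(i)-\Delta_K(j)\equiv 0~\mbox{mod}~\alpha(ij)$ for every edge $ij\in\ce_{2n-1}$; since $\Delta_K$ is manifestly a function $\cv_{2n-1}\to H^{4n-2|K|}(BT^n)$ of the correct degree (each nonzero value is a product of the $2n-|K|$ axial functions indexed by $\{k\notin K\}$), this congruence is all that remains. I would organize the check according to how many of the two endpoints $i,j$ lie in $K$, giving three cases.

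The two degenerate cases are immediate. If neither $i$ nor $j$ lies in $K$, then $\Delta_K(i)=\Delta_K(j)=0$ and there is nothing to prove. If exactly one endpoint, say $i$, lies in $K$ (so $\Delta_K(j)=0$), then because $j\notin K$ and $j\neq i$, the factor indexed by $k=j$ occurs in $\Delta_K(i)=\prod_{k\notin K}\alpha(ik)$; hence $\alpha(ij)\mid \Delta_K(i)$ and the congruence holds. This case also absorbs every diagonal edge $i\overline{i}$, since the standing hypothesis $\{i,\overline{i}\}\not\subset K$ forbids both endpoints of such an edge from lying in $K$.

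The essential case is $i,j\in K$. Here the hypothesis forces $\overline{i},\overline{j}\notin K$ and $j\neq\overline{i}$, so the edge is non-diagonal, $\alpha(ij)=-\alpha_i+\alpha_j$, and therefore $\alpha_i\equiv\alpha_j~\mbox{mod}~\alpha(ij)$. The idea is to produce a permutation $\sigma$ of the index set $\{k\in\cv_{2n-1}:k\notin K\}$ for which $\alpha(ik)\equiv\alpha(j\,\sigma(k))~\mbox{mod}~\alpha(ij)$ termwise; then
\[
\Delta_K(i)=\prod_{k\notin K}\alpha(ik)\equiv\prod_{k\notin K}\alpha(j\,\sigma(k))=\prod_{k'\notin K}\alpha(jk')=\Delta_K(j)~\mbox{mod}~\alpha(ij).
\]
The correct choice is the transposition $\sigma$ swapping $\overline{i}$ and $\overline{j}$ and fixing every other index; since $\overline{i},\overline{j}\notin K$ are distinct, this is a genuine self-bijection of the index set. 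Termwise one checks $\alpha(i\overline{i})=-\alpha_i\equiv-\alpha_j=\alpha(j\overline{j})$, the exact identity $\alpha(i\overline{j})=-\alpha_i-\alpha_j=\alpha(j\overline{i})$ (so the swapped pair matches on the nose), and $\alpha(ik)=-\alpha_i+\alpha_k\equiv-\alpha_j+\alpha_k=\alpha(jk)$ for all remaining indices. This termwise matching is exactly the one furnished by the connection $\nabla_{ij}$ of \eqref{conn} restricted to the edges missing from $K$, so the argument may equivalently be phrased through $\nabla$.

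I expect the only real subtlety to be the bookkeeping in this last case: one must notice that the naive pairing $k\mapsto k$ breaks down precisely at the two special indices $\overline{i}$ and $\overline{j}$, where $\alpha(i\overline{i})$ and $\alpha(i\overline{j})$ depart from the generic formula, and that swapping them via $\sigma$ is exactly what repairs the product. The role of the hypothesis $\{i,\overline{i}\}\not\subset K$ should be flagged explicitly, as it is used twice: it keeps diagonal edges out of the both-endpoints case, and it guarantees that $\overline{i}$ and $\overline{j}$ both lie in the index set so that $\sigma$ is well defined.
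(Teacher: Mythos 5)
Your proof is correct. The paper takes a more conceptual route: it observes that the full subgraph $\Gamma_K$ spanned by $K$ is closed under the connection $\nabla$, hence is a GKM subgraph, identifies $\Delta_K$ as the equivariant Thom class of that subgraph, and then invokes the general fact about Thom classes of GKM subgraphs by citing \cite[Lemma 4.1]{mmp07}. Your direct case analysis is essentially the verification that the paper delegates to that reference: the only nontrivial case is the one with both endpoints in $K$, and the transposition $\overline{i}\leftrightarrow\overline{j}$ you construct there is precisely the restriction of the connection $\nabla_{ij}$ to the edges normal to $\Gamma_K$, i.e.\ the same closure-under-$\nabla$ property the paper uses. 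What your version buys is self-containedness and an explicit record of where the hypothesis $\{i,\overline{i}\}\not\subset K$ is used (it rules diagonal edges out of the hard case and guarantees $\overline{i},\overline{j}$ lie in the index set); what the paper's version buys is brevity and the identification of $\Delta_K$ as a Thom class, which it reuses later. A minor point in your favor: the connection as printed in \eqref{conn} sends both $i\overline{i}$ and $i\overline{j}$ to $j\overline{j}$, which is not a bijection and would violate the congruence on $i\overline{j}$; your explicit assignment $i\overline{j}\mapsto j\overline{i}$ is the intended map, so the termwise computation you give is the safer formulation.
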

	\begin{proof}
Let $\Gamma_K$ be the full subgraph spanned by $K$ such that $\{i, \overline{i}\}\not\subset K$ for every $i\in \mathcal{V}_{2n-1}$, and let $\mathcal{GK}$ be the pair consisting of $\Gamma_{K}$ and the restricted axial function $\alpha_{K}:=\alpha|_{\mathcal{E}(\Gamma_{K})}$ on $\mathcal{E}(\Gamma_{K})$. 
Since $\mathcal{GK}$ is closed under the connection $\nabla$ (see \eqref{conn}), $\mathcal{GK}$ is a GKM subgraph of $\mathcal{GQ}_{2n-1}$  (see Definition \ref{gkmsubgraph}). Moreover, $\Delta_{K}$ is the \emph{equivariant Thom class} of the GKM subgraph $\mathcal{GK}$ (see \cite[Section 4]{mmp07}). Therefore, by similar arguments as in \cite[Lemma 4.1]{mmp07}, the lemma follows.
	\end{proof}
	
\begin{exam}
	For the GKM graph $\mathcal{GQ}_5$, the set of vertices $P=\{4,5,6\}$ (see Figure \ref{xyz} below) satisfies the condition given in Definition \ref{Delta_k}. 
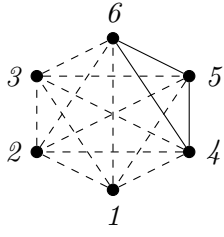
\begin{figure}[H]
		\centering
		\begin{tikzpicture}
			\node[draw, circle, fill=black, inner sep=1.5pt, label=right:5] (A) at (1, 0.5) {};
			\node[draw, circle, fill=black, inner sep=1.5pt, label=right:4] (B) at (1,-0.5) {};
			\node[draw, circle, fill=black, inner sep=1.5pt, label=left:3] (C) at (-1, 0.5) {};
			\node[draw, circle, fill=black, inner sep=1.5pt, label=left:2] (D) at (-1, -0.5) {};
			\node[draw, circle, fill=black, inner sep=1.5pt, label=6] (E) at (0, 1) {};	
			\node[draw, circle, fill=black, inner sep=1.5pt, label=below:1] (F) at (0, -1) {};		
			\draw (A) -- node[midway, below, sloped]  {} (B);
			\draw [dashed] (B) -- node[midway, below right, sloped] {} (D);
			\draw [dashed] (C) -- node[midway, below left, sloped] {} (A);
			\draw (A) -- node[midway, below, sloped]  {} (E);
			\draw[dashed]  (A) -- node[midway, below, sloped]  {} (F);
			\draw[dashed] (B) -- node[midway, below, sloped]  {} (F);
			\draw  (B) -- node[midway, below, sloped]  {} (E);
			\draw[dashed]  (F) -- node[midway, below, sloped]  {} (D);
			\draw[dashed]  (F) -- node[midway, below, sloped]  {} (C);
			\draw[dashed]  (D) -- node[midway, below, sloped]  {} (C);
			\draw [dashed] (C) -- node[midway, below, sloped]  {} (E);
			\draw[dashed]  (D) -- node[midway, below, sloped]  {} (E);
			\draw [dashed](F) -- node[midway, below, sloped]  {} (E);
			\draw[dashed] (B) -- node[midway, below, sloped]  {} (C);
			\draw[dashed] (A) -- node[midway, below, sloped]  {} (D);
		\end{tikzpicture}
		\caption{ $\Gamma_{P}$ : a subgraph consisting of the vertices $P=\{4,5,6\}\subset{\cv_5}$.}
		\label{xyz}
	\end{figure}
Therefore, according to Table~\ref{table1} in Section~\ref{sect:2.2}, the function $\Delta_{P}$ for the subgraph $\Gamma_{P}$ is defined by 
	\[\Delta_{P}(i)=\begin{cases}
0 ~~&\text{if}~~i=1,2,3,\\
\alpha(41)\alpha(42)\alpha(43)= 
\alpha_3(\alpha_1+\alpha_3)(\alpha_2+\alpha_3)~~&\text{if}~~i=4,\\
\alpha(51)\alpha(52)\alpha(53)=\alpha_2(\alpha_1+\alpha_2)(\alpha_2+\alpha_3)~~&\text{if}~~i=5,\\
\alpha(61)\alpha(62)\alpha(63)=
\alpha_1(\alpha_1+\alpha_2)(\alpha_1+\alpha_3)~~&\text{if}~~i=6.
	\end{cases}\]	
\end{exam}

\begin{rema}
\label{rel-M-delta_n1}
For $n=1$, i.e., $\mathcal{GQ}_{1}$, its vertices are defined by $\mathcal{V}_{1}:=\{1,\overline{1}\}$.
In this case, from the definitions of $M_{a}$ and $\Delta_{a}$ for $a\in \mathcal{V}_{1}:=\{1,\overline{1}\}$, we obtain the following equality:
\begin{align*}
M_{\overline{a}}=2\Delta_{a}.
\end{align*}
Moreover, we have 
\begin{align*}
Q=\Delta_{1}+\Delta_{\overline{1}}.
\end{align*}
Therefore, for $n=1$, the generators can be reduced to $\Delta_{1}, \Delta_{\overline{1}}\in H^{2}(\mathcal{GQ}_{1})$.
Furthermore, since $\mathcal{GQ}_{1}$ is the same GKM graph induced from the standard $T^{1}$-action on $\mathbb{CP}^{1}(\simeq Q_{1})$,
it follows from a well-known formula that we have 
\begin{align*}
H^{*}_{T^{1}}(Q_{1})\simeq H^{*}(\mathcal{GQ}_{1})\simeq \mathbb{Z}[\Delta_{1}, \Delta_{\overline{1}}]/\langle \Delta_{1}\Delta_{\overline{1}} \rangle\simeq H^{*}_{T^{1}}(\mathbb{CP}^{1}).
\end{align*}
\end{rema}

\section{Four types of relations among $M_v$, $Q$ and $\Delta_K$'s}
\label{rel}
This section introduces the four types of relations among  $M_v, ~Q$ and $\Delta_K$'s.

We use the following notation for $J=\cv_{2n-1}\setminus\{v\}$ or, when $J$ satisfies the property that $\{i,\overline{i}\}\not\subset J$ for all $i\in\cv_{2n-1}$ with $n\ge 2$:
\begin{equation}\label{G_J}
	G_J=\begin{cases}
		M_v &\text{if $J=\cv_{2n-1}\setminus\{v\}$}\\
		\Delta_J&\text{if $J$ satisfies the property that $\{i,\overline{i}\}\not\subset J$ for all $i\in\cv_{2n-1}$.}
\end{cases}\end{equation}
Note that the \emph{support} of class $G_J$ is precisely $J$ i.e., \[\{i\in\cv_{2n-1}~:~G_J(i)\neq 0\}=J.\] Then we can write the first multiplicative relation as follows.
\begin{lema}[Relation 1]\label{lem-relation1}
	For $n\ge 2$, let $\mathcal{J}$ be a  subset of the following set 
	\[ \Big\{J\subset \cv_{2n-1} :\text{$\{i,\overline{i}\}\not\subset J$ for all $i\in\cv_{2n-1}$}\Big\}\bigcup \Big\{\cv_{2n-1}\setminus\{v\}:v\in\cv_{2n-1}\Big\}.\]
	If $\displaystyle\bigcap_{J\in\mathcal{J}}J=\emptyset$, then the following relation holds: 
		\begin{equation}
		\prod_{ J\in \mathcal{J}} G_J=0.
		\end{equation}
For $n=1$, i.e., $\mathcal{V}_{1}=\{1,\overline{1}\}$, there are the following relations:
	\begin{equation*}
M_{1}M_{\overline{1}}=M_{1}\Delta_{1}=M_{\overline{1}}\Delta_{\overline{1}}=\Delta_{1}\Delta_{\overline{1}}=0.
	\end{equation*}
\end{lema}
\begin{proof}
	The lemma follows from the Definitions \ref{M_v} and \ref{Delta_k}.
\end{proof}

The next relation follows straightforwardly from Lemma~\ref{rel-M_v-and-Q}.
\begin{lema}[Relation 2]\label{relation2}
	For every distinct $i,j\in\cv_{2n-1}$, the following relation holds:
	\[M_i+M_{\overline{i}}=M_j+M_{\overline{j}}~=2Q.\]
\end{lema}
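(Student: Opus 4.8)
The plan is to prove the statement $M_i+M_{\overline{i}}=M_j+M_{\overline{j}}=2Q$ for all distinct $i,j\in\cv_{2n-1}$ directly from Lemma~\ref{rel-M_v-and-Q}, since almost all the work has already been done. Indeed, Lemma~\ref{rel-M_v-and-Q} asserts precisely the equality $M_v+M_{\overline{v}}=2Q$ for \emph{every} vertex $v\in\cv_{2n-1}$. The content of the present relation is just the observation that this common value $2Q$ is independent of the choice of vertex.

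Concretely, first I would apply Lemma~\ref{rel-M_v-and-Q} to the vertex $v=i$, obtaining $M_i+M_{\overline{i}}=2Q$. Then I would apply the same lemma to the vertex $v=j$, obtaining $M_j+M_{\overline{j}}=2Q$. Combining these two identities immediately yields
\[
M_i+M_{\overline{i}}=2Q=M_j+M_{\overline{j}},
\]
which is exactly the asserted relation. Since Lemma~\ref{rel-M_v-and-Q} holds for all vertices and these equalities live in the ring $H^{*}(\mathcal{GQ}_{2n-1})$, there is nothing further to check: the identity is an equality of elements, and the index $v$ plays no role beyond ranging over all vertices. One small remark I would make is that the hypothesis $i\neq j$ is not actually needed for the equality to hold (the chain of equalities is valid for any $i$ and $j$), but it is stated to emphasize that the relation is nontrivial and links genuinely different pairs of generators.

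There is essentially no obstacle here; the only thing to be careful about is to make clear that the relation is a consequence of the \emph{universality} of $v$ in Lemma~\ref{rel-M_v-and-Q}, rather than requiring any new case analysis. The vertex-by-vertex verification needed to establish $M_v+M_{\overline{v}}=2Q$ was already carried out in the proof of Lemma~\ref{rel-M_v-and-Q} (splitting into the cases $w=v$, $w=\overline{v}$, and $w\neq v,\overline{v}$), so reproducing it would be redundant. Thus the proof reduces to a single sentence invoking the previous lemma twice and equating the two right-hand sides.
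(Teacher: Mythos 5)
Your proposal is correct and matches the paper's own proof, which likewise deduces the relation directly from equation \eqref{3.11} of Lemma~\ref{rel-M_v-and-Q} applied to each vertex. Nothing further is needed.
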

\begin{proof}
		The lemma follows from \eqref{3.11}. 
\end{proof}

The next relation follows straightforwardly from Definition \ref{M_v} and Definition~\ref{Delta_k}.
\begin{lema}[Relation 3]
\label{lemma-relation3}
If $K\subset \mathcal{V}_{2n-1}$ satisfies that $|K|=n$ and $\{i,\overline{i}\}\not\subset K$ for every $i\in \mathcal{V}_{2n-1}$, then the following relation holds:
\begin{equation}
	\label{rel3-delta-M}
2\Delta_{K}=\prod_{{i\in K^{c}}} M_i.
\end{equation}
\end{lema}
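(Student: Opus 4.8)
The plan is to verify the identity as an equality of functions on $\mathcal{V}_{2n-1}$ by evaluating both sides at an arbitrary vertex, after first unpacking the combinatorial force of the hypothesis on $K$.

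First I would record what the condition on $K$ means. Since the $2n$ vertices split into the $n$ antipodal pairs $\{i,\overline{i}\}$, $i=1,\dots,n$, the requirement $\{i,\overline{i}\}\not\subset K$ together with $|K|=n$ forces $K$ to be a \emph{transversal}: it contains exactly one vertex of each pair. Hence $|K^c|=n$ as well (so both sides have matching degree $2n$), and $K^c=\{\overline{w}:w\in K\}$; in particular $w\in K$ if and only if $\overline{w}\in K^c$. This observation is the only genuinely conceptual step, and everything downstream depends on it.

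Next I would compare the two sides at a vertex $w$. If $w\notin K$, then $\Delta_K(w)=0$ by \eqref{delta_keqn}, while on the right-hand side the factor $M_w$ appears (as $w\in K^c$) and $M_w(w)=0$ by \eqref{mapmv}, so both sides vanish. The interesting case is $w\in K$, where $\Delta_K(w)=\prod_{k\in K^c}\alpha(wk)$ and the task is to locate the factor of $2$. Among the $i\in K^c$ exactly one satisfies $w=\overline{i}$, namely $i=\overline{w}$ (which lies in $K^c$ by the transversal property), and for it the middle case of \eqref{mapmv} gives $M_{\overline{w}}(w)=2\alpha_{\overline{w}}$; for every other $i\in K^c$ one has $i\neq w$ and $w\neq\overline{i}$, so $M_i(w)=\alpha(wi)$. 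Using $\alpha_{\overline{w}}=-\alpha_w=\alpha(w\overline{w})$ (from \eqref{label_opposite_vertex} and the value $\alpha(w\overline{w})=-\alpha_w$ of the axial function), the product factors as
\begin{equation*}
\prod_{i\in K^c}M_i(w)=2\,\alpha(w\overline{w})\prod_{i\in K^c\setminus\{\overline{w}\}}\alpha(wi)=2\prod_{i\in K^c}\alpha(wi)=2\Delta_K(w),
\end{equation*}
which is exactly the claim at $w$.

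I do not expect a real obstacle: as the surrounding text already indicates, the statement is a direct consequence of Definitions \ref{M_v} and \ref{Delta_k}. The only thing to be careful about is the bookkeeping of the coefficient $2$, and the clean way to see it is that the single ``antipodal'' value $M_{\overline{w}}(w)=2\alpha_{\overline{w}}$ plays the role of the edge contribution $\alpha(w\overline{w})$ appearing in $\Delta_K(w)$ while carrying the doubled coefficient; all remaining linear factors then match termwise.
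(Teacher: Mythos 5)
Your proof is correct and is exactly the verification the paper has in mind: the paper's own proof simply states that the relation follows from Definitions \ref{M_v} and \ref{Delta_k}, and your vertex-by-vertex evaluation (with the transversal observation $K^c=\{\overline{w}:w\in K\}$ and the single doubled factor $M_{\overline{w}}(w)=2\alpha_{\overline{w}}=2\alpha(w\overline{w})$) supplies precisely the omitted details.
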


	\begin{proof} 
Note that both \(2\Delta_{K}\) and \(\prod_{i\in K^{c}} M_i\) are nonzero only on the elements in \(K\).  
If \(j\in K\) (i.e., \(\overline{j}\in K^c\)), then by \eqref{delta_keqn} we have  
\[
2\Delta_K(j)
= 2\prod_{i\notin K}\alpha(ji)
= 2(-\alpha_j)\prod_{\substack{i\notin K\\ i\neq \overline{j}}}(-\alpha_j+\alpha_i).
\]
On the other hand, by \eqref{mapmv}
\[
\prod_{i\in K^{c}} M_i(j)
= M_{\overline{j}}(j)\cdot \prod_{\substack{i\in K^{c}\\ i\neq \overline{j}}} M_i(j)
= -2\alpha_j \cdot \prod_{\substack{i\in K^{c}\\ i\neq \overline{j}}} (-\alpha_j+\alpha_i).
\]
Hence, the lemma follows.
	\end{proof}

\begin{rema}\label{rema_for_rel3}
For $n=1$, i.e., $\mathcal{V}_{1}=\{1,\overline{1}\}$,  \eqref{rel3-delta-M} yields the following three relations: 
\begin{equation*}
2\Delta_{1}=M_{\overline{1}},\quad  2\Delta_{\overline{1}}=M_{1},\quad 2\Delta_{\emptyset}=0=M_{1}M_{\overline{1}}.
\end{equation*}
Moreover, together with Lemma~\ref{relation2} (Relation 2), we have
\begin{equation*}
\Delta_{1}+\Delta_{\overline{1}}=Q.
\end{equation*}
\end{rema}
\begin{exam}\label{exple_for_rel3}
For $n=2$ and $K=\{1,2\}\subset \cv_3$, we have 
\[2\Delta_{K}=M_{\overline{1}}M_{\overline{2}}\in H^4(\mathcal{GQ}_3).\]
\end{exam}

We also have the following multiplicative relation for two generators, $\Delta_K$ and $M_i$:
\begin{lema}[Relation 4]\label{lem-relation4}
	Fix $i\in \cv_{2n-1}$. If $K\subset\cv_{2n-1}$ satisfies $\{i\}\subsetneq K$ and  $\{v,\overline{v}\}\not\subset K$ for every $v\in\cv_{2n-1}$, then the following equality holds: \[\Delta_K\cdot M_i=\Delta_{K\setminus\{i\}}.\]
\end{lema}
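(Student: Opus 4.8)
The plan is to verify the claimed identity pointwise as an equality of functions $\cv_{2n-1}\to H^{*}(BT^{n})$; that is, I would fix an arbitrary vertex $j\in\cv_{2n-1}$ and check that $\Delta_{K}(j)\,M_{i}(j)=\Delta_{K\setminus\{i\}}(j)$ by directly comparing the two defining formulas \eqref{delta_keqn} and \eqref{mapmv}. Writing $K':=K\setminus\{i\}$, the first preliminary observation I would record is that $K'$ is again admissible in the sense of Definition~\ref{Delta_k}: the hypothesis $\{i\}\subsetneq K$ forces $i\in K$ and $K'\neq\emptyset$, while $K'\subset K$ inherits the property $\{v,\overline{v}\}\not\subset K'$, so $\Delta_{K'}$ is a genuine element of $H^{*}(\mathcal{GQ}_{2n-1})$. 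The second, and key, observation is that since $i\in K$ and $K$ contains no antipodal pair, we must have $\overline{i}\notin K$; consequently the complement of $K'$ is exactly $K^{c}\cup\{i\}$.

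Granting these, the heart of the argument is the bookkeeping identity
\begin{equation*}
\Delta_{K'}(j)=\prod_{k\notin K'}\alpha(jk)=\Big(\prod_{k\notin K}\alpha(jk)\Big)\cdot\alpha(ji)=\Delta_{K}(j)\cdot\alpha(ji),
\end{equation*}
valid for every $j\in K'$, which simply isolates the one extra factor $\alpha(ji)$ arising from the newly uncovered index $i$. I would then split the verification into three cases. If $j\notin K$, both sides vanish because $\Delta_{K}(j)=0$ and $j\notin K'$. If $j\in K\setminus\{i\}$, then $j\neq i,\overline{i}$ (the latter since $\overline{i}\notin K$), so \eqref{mapmv} gives $M_{i}(j)=\alpha(ji)$, and the displayed identity yields $\Delta_{K}(j)M_{i}(j)=\Delta_{K}(j)\alpha(ji)=\Delta_{K'}(j)$. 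Finally, if $j=i$, then $M_{i}(i)=0$ makes the left-hand side vanish, while $i\notin K'$ makes $\Delta_{K'}(i)=0$ as well.

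I do not expect any serious obstacle here: the statement is a direct comparison of the two product formulas, and Relation~4 can be read as saying that multiplying the Thom class $\Delta_{K}$ by $M_{i}$ enlarges the transverse directions by the single direction indexed by $i$. The only points requiring care are the two preliminary observations, namely that $\overline{i}\notin K$ (so that $M_{i}$ restricts to the linear factor $\alpha(ji)$ on $K\setminus\{i\}$, rather than taking the exceptional value $2\alpha_{i}$), and that the vertex $j=i$ is handled consistently by the simultaneous vanishing of $M_{i}(i)$ and $\Delta_{K'}(i)$. Once these are in place, the three cases close the proof.
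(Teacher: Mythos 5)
Your proof is correct and follows essentially the same route as the paper's: both arguments evaluate $\Delta_K\cdot M_i$ vertex by vertex, use $\overline{i}\notin K$ to see that $M_i$ contributes exactly the linear factor $\alpha(ji)$ on $K\setminus\{i\}$, and match the result against the defining formula for $\Delta_{K\setminus\{i\}}$. Your version merely makes the three-case split and the admissibility of $K\setminus\{i\}$ more explicit than the paper does.
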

\begin{proof}
Note that $\Delta_K\cdot M_i$ is non-zero only on $K\cap (\cv_{2n-1}\setminus\{i\})=K\setminus\{i\}$. 
By the assumption of $K$, we have $\overline{i}\not\in K\setminus\{i\}$. 
This implies that we can define the element $\Delta_{K\setminus\{i\}}$ by Definition~\ref{Delta_k}.
On the other hand, since $\overline{i}\not\in K\setminus\{i\}$, and using the definitions of $\Delta_{K}$ and $M_{i}$ (see Definition \ref{Delta_k} and Definition \ref{M_v}),
we have the following equality:
\begin{equation}
	\big(\Delta_K\cdot M_i\big)(w)=\begin{cases}
	\displaystyle	\prod_{k\notin K\setminus\{i\}}\alpha(wk)~~&\text{if}~~w\in K\setminus\{i\}\\
		0~~&\text{if}~~w\notin K\setminus\{i\}.
\end{cases}
\end{equation}
Hence, from \eqref{delta_keqn}, the lemma follows. \end{proof}

\begin{rema}\label{rema_for_rel4}
For $n=1$, i.e., $\mathcal{V}_{1}=\{1,\overline{1}\}$, this relation yields the following two relations: 
\begin{equation*}
\Delta_{1}M_{1}=\Delta_{\overline{1}}M_{\overline{1}}=\Delta_{\emptyset}=0.
\end{equation*}
\end{rema}

\begin{exam}
	For $n=3$, $i=1$ and $K=\{1,3,5\}$, we have \[\Delta_{\{1,3,5\}}\cdot M_1=\Delta_{\{3,5\}}.\]
\end{exam}

\section{Main theorem and its proof}
\label{secmainth} 

This section aims to prove this paper's main theorem (Theorem \ref{mainth}). To do that, we first prepare some notations. Then, in Lemma  \ref{surjmainth}, we show that $M_v$, $Q$ and $\Delta_K$ as defined in Section \ref{gen}, generate $H^*(\mathcal{GQ}_{2n-1})$ as a ring. In the final step, we will prove that the relations defined in Section \ref{rel} are enough relations to describe the ring structure of $H^*(\mathcal{GQ}_{2n-1})$. As a consequence, we present the equivariant cohomology of $Q_{2n-1}$ (i.e., $H^*_{T^{n}}(Q_{2n-1})$) in terms of generators and relations. 

We first prepare some notations.
Let $\mathbf{M}$ denote the following set of cohomology classes in $H^{*}(\mathcal{GQ}_{2n-1})$:
\[\{M_v~:~v\in\cv_{2n-1}\},\] and, 
let $\mathbf{D}$ denote  the set of cohomology classes  \[\{\Delta_P~:~\text{$P(\not=\emptyset)\subset\cv_{2n-1}, ~\{i,\overline{i}\}\not\subset P$ for all $i\in\cv_{2n-1}$\}}.\]
Let $\bz[\mathbf{M}, \mathbf{D}, Q]$ be the polynomial ring generated by all elements in $\mathbf{M}, \mathbf{D}$ and $Q$.

Let $\mathfrak{I}\lhd \bz[\mathbf{M}, \mathbf{D}, Q]$ be an ideal generated by the following types of elements:
\begin{enumerate}[(i)]
\item $\displaystyle\prod_{\cap J=\emptyset} G_J$ for $G_J$ as in \eqref{G_J} if $n\ge 2$. If $n=1$, $M_{1}M_{\overline{1}},~ \Delta_{1}M_{1},~ \Delta_{\overline{1}}M_{\overline{1}}$ and $\Delta_{1}\Delta_{\overline{1}}$;
\item $M_v+M_{\overline{v}}-2Q$ for every $v\in\cv_{2n-1}$;
\item $\displaystyle2\Delta_{K}-\prod_{i\in K^{c}}M_{i}$ for every $K\subset \mathcal{V}_{2n-1}$ such that $|K|=n$ and $\{i,\overline{i}\}\not\subset K$ for all $i\in \mathcal{V}_{2n-1}$;
\item $\Delta_P\cdot M_i-\Delta_{P\setminus\{i\}}$ for every subset $P\subset \cv_{2n-1}$ such that {$\{i\}\subset P$} 
and  $\{j,\overline{j}\}\not\subset P$ for all $j\in\cv_{2n-1}$.
\end{enumerate}
We define \[\bz[\mathcal{GQ}_{2n-1}]:= \bz[\mathbf{M}, \mathbf{D}, Q]/\mathfrak{I}.\] Let $\widetilde{\varphi}:\bz[\mathbf{M}, \mathbf{D}, Q]\to H^*(\mathcal{GQ}_{2n-1})$ be a ring homomorphism, and let  
\begin{equation}
\label{phi}
	{\phi}:\bz[\mathcal{GQ}_{2n-1}]\to H^*(\mathcal{GQ}_{2n-1})
\end{equation} 
be the homomorphism induced from $\widetilde{\varphi}$.
Note that we can immediately prove the well-definedness of $\varphi$ by lemmas in Section~\ref{rel}.

In other words, the following diagram commutes:
\begin{equation}\label{commdiag}
	\begin{tikzcd}
		&\bz[\mathbf{M}, \mathbf{D}, Q]\arrow[d]	\arrow[dr, "\widetilde{\phi}"] &\\
		&\bz[\mathcal{GQ}_{2n-1}] \arrow[r, "\phi"]&H^*(\mathcal{GQ}_{2n-1})
	\end{tikzcd}
\end{equation}
where the vertical map is the natural projection.
Now we state the main theorem of this paper.
\begin{guess}
\label{mainth}
The homomorphism $\varphi$ is an isomorphism of rings, i.e.,
	\begin{equation}\label{mainth1}
		\varphi: \bz[\mathcal{GQ}_{2n-1}]\xrightarrow{\cong} H^*(\mathcal{GQ}_{2n-1}).\end{equation} 
	In particular, for the effective $T^{n}$-action on $Q_{2n-1}$ defined by \eqref{introtorusaction}, the following isomoprhism holds:	
\begin{equation}
\label{mainth2}
		H^*_{T^{n}}(Q_{2n-1})\cong H^*(\mathcal{GQ}_{2n-1})\cong  \bz[\mathcal{GQ}_{2n-1}].
\end{equation}
\end{guess}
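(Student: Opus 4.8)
The plan is to prove that $\varphi$ is a ring isomorphism by establishing surjectivity and injectivity separately, and then to obtain the geometric consequence \eqref{mainth2} by combining with the earlier lemma identifying $H^*_{T^n}(Q_{2n-1})$ with $H^*(\mathcal{GQ}_{2n-1})$. Since $\varphi$ is already known to be a well-defined ring homomorphism (the relations (i)--(iv) all hold in $H^*(\mathcal{GQ}_{2n-1})$ by Lemmas~\ref{lem-relation1}--\ref{lem-relation4}), the entire content is the two-sided bijectivity over the base ring $H^*(BT^n)$.

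First I would prove \textbf{surjectivity}. By Proposition~\ref{H(BT)-generators} the scalars $\alpha_i = M_i - Q$ already lie in the image of $\varphi$, so $\vartheta(H^*(BT^n))$ is hit; it then suffices to show that every element $f \in H^*(\mathcal{GQ}_{2n-1})$ is an $H^*(BT^n)$-combination of products of the $M_v$, $Q$, and $\Delta_K$. The natural tool is the standard GKM/Morse-theoretic filtration argument: order the vertices, and for each vertex $v$ build up $f$ degree by degree by subtracting off suitable multiples of the Thom-class elements $\Delta_K$, which vanish away from $K$ and whose leading terms are products of axial functions. Because the graph is the complete graph on $2n$ vertices and the $\Delta_K$ are precisely the equivariant Thom classes of the GKM subgraphs $\mathcal{GK}$, an inductive clearing procedure (subtract the class that matches $f$ at the ``lowest'' surviving vertex, reducing the support) terminates and expresses $f$ in the claimed generators. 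This is essentially the content that should be packaged as a preparatory surjectivity lemma (the ``Lemma~\ref{surjmainth}'' referenced in the section preamble).

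Next I would prove \textbf{injectivity}, which I expect to be the main obstacle. The clean strategy is a rank/Poincar\'e-series comparison: since $Q_{2n-1}$ is equivariantly formal, $H^*_{T^n}(Q_{2n-1})$ is a free $H^*(BT^n)$-module whose rank in each degree is dictated by the ordinary Betti numbers of $Q_{2n-1}$ (computed combinatorially in Section~\ref{sect:6}), equivalently by the $2n$ vertices of $\mathcal{GQ}_{2n-1}$. One would show that the presented ring $\bz[\mathcal{GQ}_{2n-1}]$, after imposing relations (i)--(iv), is also free over $H^*(BT^n)$ of the same graded rank, by exhibiting an explicit monomial $H^*(BT^n)$-basis: relations (ii) and (iii) let one eliminate all $M_{\overline{v}}$ and reduce $2\Delta_K$ to products of $M_i$'s, relation (iv) collapses products $\Delta_K M_i$ to smaller $\Delta$'s, and relation (i) supplies the vanishing products. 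The surjective map $\varphi$ between two free modules of equal finite rank in each degree must then be an isomorphism. The delicate point is controlling the $2$-torsion phenomena: relations (ii)--(iii) only pin down $2\Delta_K$ and $2\alpha_i$ rather than $\Delta_K$ and $\alpha_i$ themselves, mirroring the factor-of-$2$ in the cohomology presentation of $Q_{2n-1}$, so the basis count must be carried out with care over $\mathbb{Z}$ (not merely over $\mathbb{Q}$) to rule out spurious torsion in $\bz[\mathcal{GQ}_{2n-1}]$ and to confirm that the generator $Q$ is genuinely needed and independent.

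Finally, granting the isomorphism \eqref{mainth1}, the display \eqref{mainth2} follows immediately: the Lemma at the start of Section~\ref{gen} gives the graded $H^*(BT^n)$-algebra isomorphism $H^*_{T^n}(Q_{2n-1}) \simeq H^*(\mathcal{GQ}_{2n-1})$, and composing with $\varphi^{-1}$ yields the presentation $H^*_{T^n}(Q_{2n-1}) \cong \bz[\mathcal{GQ}_{2n-1}]$. I would emphasize that the whole argument reduces to the rank matching of step two, so the write-up should isolate that freeness-and-basis computation as the technical heart and treat surjectivity and the geometric corollary as the bracketing, more routine, steps.
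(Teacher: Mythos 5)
Your proposal is correct in outline, and your surjectivity step is essentially the paper's Lemma~\ref{surjmainth}: the same vertex-by-vertex clearing procedure with $1,\,M_1,\,M_1M_2,\ldots,M_1\cdots M_{n-1}$ and the Thom classes $\Delta_{\{n+1,\ldots,2n\}},\ldots,\Delta_{\{2n\}}$, terminating in the normal form \eqref{formatcohomclass}. Your injectivity step, however, takes a genuinely different route from Lemma~\ref{injectivitymainth}. The paper does not compare Poincar\'e series: it restricts to vertices via the injection $H^*(\mathcal{GQ}_{2n-1})\hookrightarrow\bigoplus_{v}H^*(BT^n)$ (Corollary~\ref{cor_H-localizataion}, prepared by Lemma~\ref{lemma5.4}, which identifies the image at each vertex with the polynomial ring $\bz[Q, M_i\,|\,i\in I_v]$), writes an element of the kernel in the normal form of Remark~\ref{surjremark1}, and kills the coefficients $h_1,\ldots,h_{2n}$ one vertex at a time using that each $\bz[Q, M_i\,|\,i\in I_v]$ is an integral domain; this is purely combinatorial and uses no input about the Betti numbers of $Q_{2n-1}$. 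Your rank-comparison argument can be made to work, but note one point of logic: you cannot ``exhibit a monomial basis'' of $\bz[\mathcal{GQ}_{2n-1}]$ before injectivity is known, since independence of those monomials in the abstract presented ring is exactly what is at stake. What the relations actually deliver is a \emph{spanning} set of $2n$ module generators over $H^*(BT^n)$ in degrees $0,2,\ldots,2n-2,2n,\ldots,4n-2$ (Remark~\ref{surjremark1}); combined with equivariant formality, which makes the target free of the same graded rank, each graded piece is then a surjection from a $\mathbb{Z}$-module with at most $N$ generators onto $\mathbb{Z}^{N}$, hence an isomorphism. With that repair your route is sound, and it correctly flags the integral $2$-torsion subtlety; what the paper's route buys is independence from the geometric freeness statement, while yours buys a quicker finish once the spanning set is established.
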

In the proofs below, for simplicity, we often abuse the notations $M_i$ and $\Delta_P$ to mean $\widetilde{\varphi}(M_i)$ and $\widetilde{\varphi}(\Delta_P)$, respectively.
We first prove the surjectivity of $\varphi$.

\begin{lema}\label{surjmainth}
	The homomorphism $\varphi: \bz[\mathcal{GQ}_{2n-1}]\to H^*(\mathcal{GQ}_{2n-1})$ is surjective.
\end{lema}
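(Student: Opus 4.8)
The plan is to prove surjectivity by showing that an arbitrary class $f \in H^*(\mathcal{GQ}_{2n-1})$ lies in the subring generated by $\{M_v\}$, $Q$, and $\{\Delta_K\}$. I would argue by a degree-lowering induction: given $f$ of degree $2m$, I would subtract off a suitable polynomial in the generators so that the remainder either vanishes or is supported on strictly fewer vertices, and then iterate. The natural inductive parameter is the size of the support $\mathrm{supp}(f) := \{v \in \cv_{2n-1} : f(v) \neq 0\}$, decreased at each step until the support is empty (whence $f=0$). A class supported on a single vertex is, up to the congruence relations, a multiple of a product of edge labels $\alpha(vk)$, and the $\Delta_K$'s together with the $M_v$'s are precisely built to realize such ``vertex-localized'' products.

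Concretely, the first step is to fix a vertex $v$ in the support and inspect the value $f(v) \in H^*(BT^n)$. The congruence relations \eqref{congrel} force $f(v)$ to be divisible by $\alpha(vk)$ for every neighbor $k$ with $f(k)=0$; since $\Gamma_{2n-1}$ is the complete graph, this gives divisibility by $\prod_{k \notin \mathrm{supp}(f)} \alpha(vk)$, which is exactly a factor of $\Delta_K(v)$ for the relevant $K$ containing $v$. Using Proposition~\ref{H(BT)-generators}, every $\alpha_i$ — and hence every polynomial coefficient in $H^*(BT^n)$ — is already expressible through $M_i$ and $Q$, so any such product of edge labels can be written as a value of a polynomial in $\mathbf{M}$, $Q$, and $\mathbf{D}$. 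I would then subtract an appropriate combination of $\Delta_K \cdot (\text{polynomial in } M_i, Q)$ from $f$ to kill the value at $v$ while not enlarging the support elsewhere, strictly shrinking $\mathrm{supp}(f)$.

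The step I expect to be the main obstacle is the bookkeeping with the antipodal constraint $\{i,\overline{i}\} \not\subset K$ built into Definition~\ref{Delta_k}. The subsets $K$ admissible for $\Delta_K$ are exactly those containing at most one of each antipodal pair, so when the support of $f$ happens to contain both $v$ and $\overline{v}$, I cannot directly produce a single $\Delta_K$ with that support. Here I would exploit Lemma~\ref{rel-M_v-and-Q} (the relation $M_v + M_{\overline{v}} = 2Q$) and Lemma~\ref{2alpha_generating} together with the explicit formula for $M_v$, which \emph{is} supported on all vertices including $\overline{v}$, to handle the ``diagonal'' contributions along edges $v\overline{v}$ that the $\Delta_K$'s cannot see. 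In effect, the $M_v$'s supply the extra generator needed precisely on the antipodal edges, so the combination of $\Delta_K$-subtractions (for the off-antipodal part) and $M_v$-subtractions (for the antipodal part) should reduce the support at each stage.

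Finally, I would organize the induction carefully so that subtracting a generator to fix the value at $v$ does not reintroduce nonzero values at vertices already cleared; this is where choosing the admissible $K$ to be as large as possible (equal to the current support, after resolving antipodal pairs) is important, since by the divisibility just established the subtracted class matches $f$ at $v$ and vanishes outside $K$. Iterating until the support is empty yields $f$ as a $\bz$-polynomial in the generators, proving that $\widetilde{\varphi}$, and hence $\varphi$, is surjective. The delicate point throughout is verifying that the divisibility forced by the congruence relations supplies exactly the edge-label factors appearing in $\Delta_K(v)$, with the correct integer coefficients, so that no denominators are introduced over $\bz$.
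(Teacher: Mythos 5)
Your proposal is correct and follows essentially the same strategy as the paper: an iterated subtraction of generator multiples that clears one vertex of the support at a time, using the congruence relations to force divisibility of $f(v)$ by the product of edge labels $\alpha(vk)$ over the cleared vertices $k$, and $\alpha_i = M_i - Q$ to express the polynomial coefficients inside $\bz[\mathbf{M},\mathbf{D},Q]$. The paper resolves the antipodal bookkeeping you flag by fixing the order $1,\ldots,n,n+1,\ldots,2n$ and subtracting $h_j M_1\cdots M_{j-1}$ at the first $n$ vertices (where the remaining support still contains antipodal pairs, so no admissible $\Delta_K$ exists, but $\prod_{i<j}M_i(j)=\prod_{i<j}\alpha(ji)$ gives exactly the forced divisor) and $h_{n+j}\Delta_{\{n+j,\ldots,2n\}}$ at the last $n$ vertices (where the support is admissible), which is precisely the $M$--versus--$\Delta$ split you describe.
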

\begin{proof}
	This is enough to prove that $\widetilde{\varphi}:\bz[\mathbf{M}, \mathbf{D}, Q] \to H^*(\mathcal{GQ}_{2n-1})$ is surjective. 
	Consider an element $f\in H^*(\mathcal{GQ}_{2n-1})$. 
	For the vertex  $1\in\cv_{2n-1}$, one can write $f(1)\in H^*(BT^{n})\cong\bz[\alpha_1,\ldots,\alpha_n]$  (see \eqref{congrel}) as
	\[f(1)=\sum_{\mathbf{j}}c_{\mathbf{j}}\alpha_1^{j_1}\cdots \alpha_{n}^{j_{n}}={h_1}\] 
	where $c_{\mathbf{j}}\in\bz$ and $\mathbf{j}=(j_1,\ldots, j_{n})\in (\mathbb{N}\cup\{0\})^{n}$. 
Since $\alpha_i=M_i-Q$ (see \eqref{3.13}) for each $i$,  it follows that 
	\[f(1)=\sum_{\mathbf{j}}c_{\mathbf{j}}(M_1-Q)^{j_1}\cdots (M_n-Q)^{j_{n}}(1)=h_1.\]
This implies that there exists an element in $\bz[\mathbf{M}, \mathbf{D}, Q]$ whose image under $\widetilde{\varphi}$ coincides with $f(1)$ on the vertex $1\in \cv_{2n-1}$.

Considering $h_1\in H^*(BT^n)$ as a constant map (cf. \eqref{eqn3.10}) in  $H^*(\mathcal{GQ}_{2n-1})$, we next define	  $f_2:=f-h_1$ and hence $f_2(1)=0$. By using the congruence relation \eqref{congrel} on the edge $21\in\ce_{2n-1}$, we have
	\[f_2(2)-f_2(1)\equiv0~~~\text{mod}~\alpha(21)=M_1(2).\]
	Therefore, $f_2(2)=h_2M_1(2)$ for some $h_2\in H^*(BT^{n})$.

Furthermore, from \eqref{3.13}, we have that $h_2M_1$ is in the image of $\widetilde{\varphi}$. 
Define \[f_3=f_2-h_2M_1~(=f-h_1-h_2M_1).\]
Since $f_2(1)=0$, $M_1(1)=0$, and $f_2(2)=h_2M_1(2)$, we have
\[f_3(1)=0=f_3(2).\] 
Thus, by the congruence relations on the edges $31$ and $32\in\ce_{2n-1}$, we can  write
	\[f_3(3)=h_3M_1M_2(3)\] for some $h_3\in H^*(BT^n)\subset \bz[\mathbf{M}, \mathbf{D}, Q]$.

Similarly, we can also verify that $f_4:=f_3-h_3M_1M_2$ satisfies $f_4(1)=f_4(2)=f_4(3)=0$.	
By iterating this procedure $n$ times, we obtain an element
	\[f_{n}:=f_{n-1}-h_{n-1}M_1\cdots M_{n-2}\] for some $h_{n-1}\in H^*(BT^n)\subset \bz[\mathbf{M}, \mathbf{D}, Q]$ and $f_{n}\in H^*(\mathcal{GQ}_{2n-1})$, which satisfies \[f_{n}(i)=0 \text{ for $1\leq i\leq n-1$}\] as $f_{n-1}(i)=0$ for $1\leq i\leq n-2$ and $f_{n-1}(n-1)=h_{n-1}M_1\cdots M_{n-2}(n-1)$. 
	
	Furthermore, applying the congruence relations for the edges $ni\in\ce_{2n-1}$ for $1\leq i\leq n-1$, we have 
	\[f_n(n)=h_n M_1\cdots M_{n-1}(n)\]
for some $h_n\in H^*(BT^n)$.
Then, we have
	\begin{equation}
		\begin{split}\label{surj1}
			f_{n+1}&=f_{n}-h_{n}M_1\cdots M_{n-1}\\
			&=f_{n-1}-h_{n-1}M_1\cdots M_{n-2}-h_{n}M_1\cdots M_{n-1}\\
			&~~~~~~~~\vdots\\
			&=f-h_1-h_2M_1-\cdots-h_{n-1}M_1\cdots M_{n-2}-h_{n}M_1\cdots M_{n-1}
		\end{split}
	\end{equation}
which statisfies $f_{n+1}(i)=0$ for all $1\leq i\leq n$.
	Therefore by using the congruence relation \eqref{congrel} for the edge $\overline{n}i\in\ce_{2n-1}$ for every $1\leq i\leq n$ (where $\overline{n}=n+1$), 
one can verify that 
\[f_{n+1}(n+1)\equiv 0~~~\text{mod}~\alpha(\overline{n}i)\]
for all $1\leq i\leq n$. 
In particular, from the definition of $\Delta_{\{n+1,\ldots,2n\}}$ (see Definition \ref{Delta_k}), one can write   
\[f_{n+1}(n+1)=h_{n+1}\Delta_{\{n+1,\ldots,2n\}}(n+1),\]
for some $h_{n+1}\in  H^*(BT^{n})\subset \bz[\mathbf{M}, \mathbf{D}, Q]$. 
	
	Next, we define \[f_{n+2}=f_{n+1}-h_{n+1}\Delta_{\{n+1,\ldots,2n\}}\] which satisfies $f_{n+2}(1)=\cdots =f_{n+2}(n+1)=0$ since $\Delta_{\{n+1,\ldots,2n\}}(i)=0$ for all $1\leq i\leq n$.
	
	Similarly, for  $2\leq j\leq n$, there exists $h_{n+j}\in  H^*(BT^n)\subset \bz[\mathbf{M}, \mathbf{D}, Q]$ such that
	\begin{equation}\label{6.17}
		f_{n+j+1}:=f_{n+j}-h_{n+j}\Delta_{\{n+j,\ldots,2n\}}
	\end{equation}
satisfies $f_{n+j+1}(1)=\cdots=f_{n+j+1}(n+j)=0$.
Notice that, when $j=n$, \[f_{2n+1}:=f_{2n}-h_{2n}\Delta_{\{2n\}},\] which satisfies   $f_{2n+1}(1)=\cdots=f_{2n+1}(2n)=0$, i.e., $f_{2n+1}\equiv 0$.
Therefore, $f_{2n}=h_{2n}\Delta_{\{2n\}}$. 
By using \eqref{6.17} repeatedly, we have the following:
	\begin{equation}\label{surj2}
		\begin{split}
			f_{2n-1}&=h_{2n-1}\Delta_{\{2n-1,2n\}}+h_{2n}\Delta_{\{2n\}};\\
			f_{2n-2}&=h_{2n-2}\Delta_{\{2n-2,2n-1,2n\}}+h_{2n-1}\Delta_{\{2n-1,2n\}}+h_{2n}\Delta_{\{2n\}};\\
			&~~~~\vdots\\
			f_{n+1}&=h_{n+1}\Delta_{\{n+1,\ldots,2n\}}+\cdots+h_{2n-2}\Delta_{\{2n-2,2n-1,2n\}}+h_{2n-1}\Delta_{\{2n-1,2n\}}+h_{2n}\Delta_{\{2n\}}.
		\end{split}
	\end{equation}
	Therefore, from \eqref{surj1} and \eqref{surj2}, we have 
	\begin{equation}\label{formatcohomclass}
		\begin{split}
			f=h_1+h_2M_1&+\cdots+h_{n}M_1\cdots M_{n-1}\\&+h_{n+1}\Delta_{\{n+1,\ldots,2n\}}+\cdots+h_{2n-2}\Delta_{\{2n-2,2n-1,2n\}}+h_{2n-1}\Delta_{\{2n-1,2n\}}+h_{2n}\Delta_{\{2n\}}
		\end{split}
	\end{equation}
	where $h_i\in H^*(BT^n)\subset\bz[\mathbf{M}, \mathbf{D}, Q]$ for each $1\leq i\leq 2n$.
	Hence, the lemma follows.
\end{proof}

\begin{rema}\label{surjremark1}
For any $g\in\bz[\mathcal{GQ}_{2n-1}]=\bz[{\bf M,~D}, ~Q]/\mathfrak{I}$, one can choose $f\in\bz[{\bf M}, {\bf D}, Q]$ of the form give by \eqref{formatcohomclass} such that $g=f+\mathfrak{I}$.
\end{rema}

We will establish some facts to prove the injectivity of $\varphi$ in Lemma~\ref{injectivitymainth}.

Let $v\in \cv_{2n-1}=[2n]$. 
For $n\ge 2$, we define $I_v\subset [n]\subset\cv_{2n-1}$ by 
\[I_v=\begin{cases}
	[n]\setminus\{v\} &\text{ if } 1\leq v\leq n\\
	[n]\setminus\{\overline{v}\} &\text{ if } n+1\leq v\leq 2n.\\
\end{cases}\]

\begin{lema}\label{lemma5.4}
	Let $v\in\cv_{2n-1}$. 
	For $n\ge 2$, let 
$\langle G_J~|~v\notin J\rangle$ be an ideal in $\bz[\mathcal{GQ}_{2n-1}]$ generated by $G_J$ (see \eqref{G_J}) with $v\notin J$. 
Then, we have the following isomorphism:
	\begin{equation}\label{threeisom}
		\bz[\mathcal{GQ}_{2n-1}]/\langle G_J~|~v\notin J\rangle
		\simeq \bz[M_i, Q~|~i\in I_v]
		\simeq H^*(BT^n).
	\end{equation}
For $n=1$, the following isomoprhism holds:
	\begin{equation}\label{threeisom_n=1}
		\bz[\mathcal{GQ}_{1}]/\langle M_{v}, \Delta_{\overline{v}}\rangle
		\simeq \bz[\Delta_{v}]
		\simeq H^*(BT^1),
	\end{equation}
where $v\in \mathcal{V}_{1}=\{1,\overline{1}\}.$
\end{lema}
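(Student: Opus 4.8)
The plan is to prove both displayed chains of isomorphisms by exhibiting, in each case, a ring map to $H^{*}(BT^{n})$ together with a section, and then showing the section is surjective. I treat $n\ge 2$ first. For the second isomorphism in \eqref{threeisom} I would use the linear relation $\alpha_i=M_i-Q$ from Proposition~\ref{H(BT)-generators}. Writing $R:=\bz[\mathcal{GQ}_{2n-1}]/\langle G_J\mid v\notin J\rangle$, the class $M_v$ vanishes in $R$ (it is the generator $G_{\cv_{2n-1}\setminus\{v\}}$), so $\alpha_v=M_v-Q=-Q$ while $\alpha_i=M_i-Q$ for $i\in I_v$. This is a unimodular change of the free generators $\alpha_1,\dots,\alpha_n$, whence the subring $\bz[M_i,Q\mid i\in I_v]$ is a polynomial ring isomorphic to $\bz[\alpha_1,\dots,\alpha_n]=H^{*}(BT^{n})$. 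Concretely I would record two mutually inverse ring maps: the homomorphism $\bar\psi\colon R\to H^{*}(BT^{n})$ induced by $\varphi$ followed by evaluation $f\mapsto f(v)$, and the section $\sigma\colon H^{*}(BT^{n})\to R$ with $\alpha_i\mapsto M_i-Q$. The map $\bar\psi$ is well defined because $\mathrm{ev}_v\circ\varphi$ annihilates every ideal generator $G_J$ with $v\notin J$: indeed $M_v(v)=0$ and $\Delta_J(v)=0$ whenever $v\notin J$ by Definitions~\ref{M_v} and \ref{Delta_k}. Evaluating $M_i-Q$ at $v$ returns $\alpha_i$, so $\bar\psi\circ\sigma=\mathrm{id}$; hence $\sigma$ is injective, and its image is exactly $\bz[M_i,Q\mid i\in I_v]$.

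It remains to show that $\sigma$ is surjective, which is the heart of the matter and also its main obstacle. The naive route---writing each $\Delta_P$ as a polynomial in the $M_i$ and $Q$ inside $R$---founders on the factor of $2$ in Relation~3 (Lemma~\ref{lemma-relation3}): the relations only deliver $2\Delta_K=\prod_{i\in K^{c}}M_i$, and over $\bz$ this does not by itself place $\Delta_K$ in $\bz[M_i,Q]$. I would sidestep this entirely via the module-level normal form of Remark~\ref{surjremark1}. After reducing to $v=1$ by an automorphism of the labeled graph $\mathcal{GQ}_{2n-1}$ carrying $v$ to the vertex $1$ (and hence $\langle G_J\mid 1\notin J\rangle$ to $\langle G_J\mid v\notin J\rangle$), every element of $\bz[\mathcal{GQ}_{2n-1}]$ can be written in the shape \eqref{formatcohomclass}, namely $h_1+\sum_{k\ge 2}h_k M_1\cdots M_{k-1}+\sum_{j}h_j\Delta_{\{j,\dots,2n\}}$ with $h_i\in H^{*}(BT^{n})$. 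In $R$ every non-constant term dies: the $M$-products all contain the factor $M_1=M_v$, and each $\Delta_{\{j,\dots,2n\}}$ is supported in the ``barred half'' $\{n+1,\dots,2n\}$, which does not contain $v=1$. Hence the image in $R$ of any element equals $h_1\in\bz[M_i,Q\mid i\in I_v]=\mathrm{im}\,\sigma$, so $\sigma$ is onto. Therefore $\sigma$ is an isomorphism with inverse $\bar\psi$, and $R=\mathrm{im}\,\sigma=\bz[M_i,Q\mid i\in I_v]$, giving both isomorphisms in \eqref{threeisom}. (Equivalently, one re-runs the induction of Lemma~\ref{surjmainth} with $v$ as the base vertex, producing a normal form whose non-constant terms are each divisible by $M_v$ or are $\Delta_J$ with $v\notin J$.)

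For $n=1$ the statement is a direct computation. By Remark~\ref{rel-M-delta_n1} we have $M_{\overline{a}}=2\Delta_a$ and $Q=\Delta_1+\Delta_{\overline{1}}$, together with $\bz[\mathcal{GQ}_1]=\bz[\Delta_1,\Delta_{\overline{1}}]/\langle\Delta_1\Delta_{\overline{1}}\rangle$. Since $M_v=2\Delta_{\overline{v}}$, the ideal is $\langle M_v,\Delta_{\overline v}\rangle=\langle 2\Delta_{\overline v},\Delta_{\overline v}\rangle=\langle\Delta_{\overline v}\rangle$, so the quotient is $\bz[\Delta_v]$; and as $\alpha_1=M_1-Q=\Delta_{\overline{1}}-\Delta_1$ reduces to $\mp\Delta_v$, this is $\cong\bz[\alpha_1]=H^{*}(BT^{1})$, proving \eqref{threeisom_n=1}. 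The only delicate point throughout is the one flagged above: recognizing that the correct tool is the spanning form \eqref{formatcohomclass} rather than a relation-by-relation reduction of the $\Delta_P$, which the factor of $2$ obstructs.
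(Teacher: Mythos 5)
Your argument is correct in outline, and its injectivity half is exactly the paper's: the paper composes the projection $p:\bz[Q,M_{2},\ldots,M_{n}]\to \bz[\mathcal{GQ}_{2n-1}]/\langle G_{J}\mid 1\notin J\rangle$ with evaluation $i_{1}$ at the vertex $1$ and checks that $i_{1}\circ p$ sends $-Q\mapsto \alpha_{1}$ and $M_{k}-Q\mapsto\alpha_{k}$; your identity $\bar{\psi}\circ\sigma=\mathrm{id}$ is literally this. Where you genuinely diverge is surjectivity. The paper reduces the generators one at a time inside the quotient: it kills $\overline{\Delta_{L}}$ for $1\notin L$, uses Relation~4 to push a general $\overline{\Delta_{L}}$ up to the case $|L|=n$, and then combines Relations~2 and~3 to obtain $2\overline{\Delta_{L}}=2\overline{Q}\prod\overline{M_{i}}$, from which it concludes $\overline{\Delta_{L}}=\overline{Q}\prod\overline{M_{i}}$ --- i.e.\ it performs precisely the division by $2$ that you single out as the obstacle. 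You instead invoke the normal form \eqref{formatcohomclass} via Remark~\ref{surjremark1} and note that every non-constant term dies in the quotient (the $M$-products contain $M_{v}$, the $\Delta_{\{j,\ldots,2n\}}$ have $v\notin\{j,\ldots,2n\}$). This is cleaner where the paper is delicate, but be aware of what it costs: Remark~\ref{surjremark1} asserts the normal form for elements of the abstract quotient $\bz[\mathbf{M},\mathbf{D},Q]/\mathfrak{I}$, whereas Lemma~\ref{surjmainth} proves it only for elements of $H^{*}(\mathcal{GQ}_{2n-1})$; upgrading one to the other either presupposes $\ker\widetilde{\varphi}\subset\mathfrak{I}$ (essentially the injectivity the paper is building towards) or requires its own relation-by-relation reduction, where the factor of $2$ resurfaces. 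So you have relocated the difficulty rather than removed it, and a self-contained version of your route should supply that reduction. Your automorphism reduction to $v=1$ and your $n=1$ computation agree with the paper's.
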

\begin{proof}
For $n=1$, by using Relations 1--4, we have that
\begin{equation*}
\mathbb{Z}[\mathcal{GQ}_{1}]/\langle M_{v}, \Delta_{\overline{v}}\rangle\simeq \mathbb{Z}[M_{\overline{v}},\Delta_{v},Q]/\langle M_{\overline{v}}-2Q, 2\Delta_{v}-M_{\overline{v}},  \rangle\simeq \mathbb{Z}[\Delta_{v}].
\end{equation*}
This establishes the statement for $n=1$.

We next assume that $n\ge 2$, We prove the statement only for the vertex $v=1\in \cv_{2n-1}$ because the proof for other vertices will follow similarly.
If $v=1\in \cv_{2n-1}$, the isomorphism \eqref{threeisom} to be proved is stated as follows: 
\begin{equation}
\label{v=1}
\bz[\mathcal{GQ}_{2n-1}]/\langle G_J~|~1\notin J\rangle\simeq \bz[M_2,\ldots,M_{n},Q]\simeq H^*(BT^n).
\end{equation}

Note that the following elements generate $\bz[\mathcal{GQ}_{2n-1}]/\langle G_J~|~1\notin J\rangle$:
	\begin{equation}\label{injproofgen1}
		\{\overline{Q}\}\cup\{\overline{M_v}: v\in\cv_{2n-1}\}\cup\{\overline{\Delta_P} : P\subset \cv_{2n-1} ,~ \{i,\overline{i}\}\not\subset P \text{ for all $i\in\cv_{2n-1}$}\}=:\{\overline{Q}\}\cup \overline{\mathbf{M}}\cup\overline{\mathbf{D}},\end{equation} where
\begin{align*}
& \overline{M_v}:=M_v+(\mathfrak{I}+\langle G_J~|~1\notin J\rangle), \\
& \overline{Q}:=Q+(\mathfrak{I}+\langle G_J~|~1\notin J\rangle),\\
& \overline{\Delta_P}:=\Delta_P+(\mathfrak{I}+\langle G_J~|~1\notin J\rangle).
\end{align*}
	
	Let $L\subset \cv_{2n-1}$ such that $\{i,\overline{i}\}\not\subset L$ for every $1\leq i\leq n$. 
If $1\notin L$, then $\Delta_L=G_L\in \langle G_J~|~1\notin J\rangle$. Therefore, $\overline{\Delta_L}=0$ in $\bz[\mathcal{GQ}_{2n-1}]/\langle G_J~|~1\notin J\rangle$.
On the other hand, if $1\in L$ and $|L|=l<n$, then there exist vertices $v_{i}(\not=1)\in \mathcal{V}_{2n-1}$ $(i=1,\ldots, n-l)$ such that $v_{i}\not\in L$ and $v_{i}+v_{j}\not=2n+1$ for $1\le i<j\le n-l$. By repeatedly using Lemma \ref{lem-relation4} (Relation 4), we have the following equality:
\begin{align*}
\Delta_L&=\Delta_{L\cup \{v_{1}\}}\cdot M_{v_{1}} \\
&=\Delta_{L\cup \{v_{1},v_{2}\}}\cdot M_{v_{1}}\cdot M_{v_{2}} \\
&\quad \vdots \\
&=\Delta_{L\cup \{v_{1},v_{2},\ldots, v_{n-l}\}}\cdot M_{v_{1}}\cdots M_{v_{n-l}}.
\end{align*}
This shows that every generators presented by $\overline{\Delta_P}\in \bz[\mathcal{GQ}_{2n-1}]/\langle G_J~|~1\notin J\rangle$  can be written in terms of some $\overline{M_{v}}$'s and $\overline{\Delta_L}$'s with $1\in L$ and $|L|=n$.
Furthermore, for such an $L$, we can put $L=\{1,i_{1},\ldots, i_{n-1}\}$. Then, by Lemma~\ref{lemma-relation3} (Relation 3), we have that
\begin{align*}
2\Delta_{L}=\prod_{i\in L^{c}}M_{i}.
\end{align*}
On the other hand, for $L'=\{2n,i_{1},\ldots, i_{n-1}\}$, we have
\begin{align*}
2(\Delta_{L}+\Delta_{L'})=\prod_{i\in L^{c}}M_{i}+\prod_{i\in (L')^{c}}M_{i}
=(M_{1}+M_{2n})\prod_{i\in L^{c}\setminus\{2n\}}M_{i}
=2Q\prod_{i\in L^{c}\setminus\{2n\}}M_{i}.
\end{align*}
The last equality follows from Relation 2. Since $1\not\in L'$, we have
\begin{align*}
\overline{\Delta_{L}}=\overline{Q}\prod_{i\in L^{c}\setminus\{2n\}}\overline{M_{i}}.
\end{align*}
This concludes that every element in $\overline{{\bf D}}$ can be expressed in terms of $\overline{Q}$ and the elements in $\overline{{\bf M}}$. 
Namely, we can reduce the generators of \eqref{injproofgen1} to $\overline{Q}$ and $\overline{{\bf M}}$.
	
	Next, from \eqref{G_J}, we have $M_1=G_{\cv_{2n-1}\setminus\{1\}}$, i.e., $\overline{M_{1}}=0\in\bz[\mathcal{GQ}_{2n-1}]/\langle G_J~|~1\notin J\rangle$.
	Therefore, together with 
Lemma~\ref{relation2} {(Relation 2)}, we have 
	\[2\overline{Q}=\overline{ M_{\overline{1}}}=\overline{M_2}+\overline{ M_{\overline{2}}}=\cdots=\overline{M_{n}}+ \overline{M_{\overline{n}}}
	\in \bz[\mathcal{GQ}_{2n-1}]/\langle G_J~|~1\notin J\rangle.\]
	Consequently,  \[2\overline{Q}=\overline{ M_{\overline{1}}},~~~
	 \overline{M_{\overline{k}}}=2\overline{Q}-\overline{M_{k}}~~ \text{ for }k=2,\ldots,n.\]
	This implies that all generators in \eqref{injproofgen1} can be reduced to the following:
	\[\overline{Q} ~~\text{and}~~\{\overline{M_i}~|~2\leq i\leq n\}.\]
	Hence, the natural projection $p: \bz[Q, M_2, \ldots,M_{n}]\to 	\bz[\mathcal{GQ}_{2n-1}]/\langle G_J~|~1\notin J\rangle$ {(see \eqref{commdiag})} is surjective.
	Note that \[\bz[Q, M_2, \ldots,M_{n}]=\bz[-Q, M_2-Q, \ldots,M_{n}-Q].\]

We finally consider the following composition homomorphism 
	\[\bz[-Q, M_2-Q, \ldots, M_n-Q]\xrightarrow{p} \frac{\bz[\mathcal{GQ}_{2n-1}]}{\langle G_J |~1\notin J\rangle }\xrightarrow{i_1}H^*(BT^n),\]
where $i_1$ is induced from the composition $\bz[\mathcal{GQ}_{2n-1}]\xrightarrow{\varphi}H^*(\mathcal{GQ}_{2n-1})\to H^*(BT^{n})$, and is defined by $f\mapsto \varphi(f)(1)$ for $f\in\bz[\mathcal{GQ}_{2n-1}]$. 
Namely, $i_{1}$ is the restriction map onto the vertex $1\in \mathcal{V}_{2n-1}$.
Using \eqref{mapmv} and \eqref{gen_G}, we have 
	\[i_{1}\circ p(-Q)=\alpha_1, ~\text{ and, }~i_{1}\circ p(M_k-Q)=\alpha_k ~~\text{ for } k=2,\ldots,n.\]
	Hence, $i_1\circ p$ is an isomorphism, which shows that $p$ is injective. Consequently, $p$ is an isomorphism.
	
Hence,	
\begin{equation}
		\frac{\bz[\mathcal{GQ}_{2n-1}]}{\langle G_J |~1\notin J\rangle }\cong\bz[Q, M_2,\ldots,M_n]\cong H^*(BT^n)=\bz[\alpha_1,\ldots,\alpha_n].
\end{equation}
This establishes \eqref{v=1}.
\end{proof}

\begin{coro}
\label{cor_H-localizataion}
We have the following injective homomorphisms:
\begin{eqnarray}
\label{localization}
& &\displaystyle H^*(\mathcal{GQ}_{2n-1})\hookrightarrow\bigoplus_{v\in\cv_{2n-1}}H^*(BT^{n})\simeq 
\bigoplus_{v\in\cv_{2n-1}}\bz[Q, M_i~|~i\in I_v]\ {\text{for $n\ge 2$};} \\
\label{localiation_n=1}
& &\displaystyle {H^*(\mathcal{GQ}_{1})\hookrightarrow\bigoplus_{v\in\cv_{1}}H^*(BT^{1})\simeq	\mathbb{Z}[\Delta_{1}]\oplus \mathbb{Z}[\Delta_{\overline{1}}].}
\end{eqnarray}
\end{coro}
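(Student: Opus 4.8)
The plan is to read off both arrows directly from the definition of graph equivariant cohomology and from Lemma~\ref{lemma5.4}; there is no serious computation to perform. First I would observe that the injection is essentially tautological. By the very definition \eqref{congrel}, an element $f\in H^*(\mathcal{GQ}_{2n-1})$ \emph{is} a function $f:\cv_{2n-1}\to H^*(BT^n)$ satisfying the congruence relations, so the evaluation map
\[
\mathrm{ev}: H^*(\mathcal{GQ}_{2n-1}) \to \bigoplus_{v\in\cv_{2n-1}} H^*(BT^n), \qquad f\mapsto \big(f(v)\big)_{v\in\cv_{2n-1}},
\]
is nothing but the inclusion of the subring cut out by those congruences into the ambient ring $\bigoplus_{v}H^*(BT^n)$, which carries componentwise operations and is a finite direct sum since $\cv_{2n-1}$ is finite. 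Hence $\mathrm{ev}$ is an injective ring homomorphism, and this supplies the arrow $\hookrightarrow$ in both \eqref{localization} and \eqref{localiation_n=1}.

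Second, I would provide the identification of each summand $H^*(BT^n)$ with $\bz[Q, M_i\mid i\in I_v]$. This is exactly the content of Lemma~\ref{lemma5.4}: for $n\ge 2$ the restriction $i_v$ onto the vertex $v$ factors through the quotient $\bz[\mathcal{GQ}_{2n-1}]/\langle G_J \mid v\notin J\rangle$, and the isomorphism \eqref{threeisom} identifies this quotient simultaneously with the polynomial ring $\bz[Q, M_i\mid i\in I_v]$ and with $H^*(BT^n)=\bz[\alpha_1,\ldots,\alpha_n]$. Concretely, taking $v=1$ as in the proof of Lemma~\ref{lemma5.4}, the isomorphism is given by $-Q\mapsto \alpha_1$ and $M_k-Q\mapsto \alpha_k$ for $k=2,\ldots,n$. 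Assembling these ring isomorphisms vertex by vertex yields the isomorphism $\bigoplus_{v}H^*(BT^n)\simeq \bigoplus_{v}\bz[Q, M_i\mid i\in I_v]$ realizing the $\simeq$ in \eqref{localization}.

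For $n=1$ I would argue identically, invoking the isomorphism \eqref{threeisom_n=1} of Lemma~\ref{lemma5.4} in place of \eqref{threeisom}: at $v=1$ it gives $H^*(BT^1)\simeq \bz[\Delta_1]$, while at $v=\overline{1}$ it gives $H^*(BT^1)\simeq \bz[\Delta_{\overline{1}}]$, producing the target $\mathbb{Z}[\Delta_1]\oplus\mathbb{Z}[\Delta_{\overline{1}}]$ of \eqref{localiation_n=1}.

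I do not expect any genuine obstacle: the corollary is a bookkeeping consequence of the definition of graph cohomology together with Lemma~\ref{lemma5.4}. The only point deserving a little care is to confirm that the summand-wise maps can be glued into a single ring isomorphism of the direct sums, i.e.\ that each is a bona fide ring isomorphism $H^*(BT^n)\cong \bz[Q, M_i\mid i\in I_v]$ rather than merely an $H^*(BT^n)$-module isomorphism; this is automatic because Lemma~\ref{lemma5.4} was established at the ring level through the composite $i_v\circ p$.
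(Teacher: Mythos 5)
Your proposal is correct and follows exactly the paper's own (very brief) argument: the injection is the tautological evaluation map coming from the definition \eqref{congrel} of graph equivariant cohomology, and the identification of each summand with $\bz[Q, M_i~|~i\in I_v]$ (resp.\ $\bz[\Delta_v]$ for $n=1$) is precisely Lemma~\ref{lemma5.4}. Your write-up merely spells out the bookkeeping that the paper leaves implicit.
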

\begin{proof}
	The first inclusion directly follows from the definition of graph equivariant cohomology of GKM graphs, while the the second part follows from Lemma \ref{lemma5.4}.
\end{proof}

\begin{lema}\label{injectivitymainth}
	The homomorphism $\phi: \bz[\mathcal{GQ}_{2n-1}]\to H^*(\mathcal{GQ}_{2n-1})$ is injective.
\end{lema}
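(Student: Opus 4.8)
The plan is to prove injectivity by showing that the surjection $\varphi$ is an isomorphism onto its image in a way compatible with the localization map of Corollary~\ref{cor_H-localizataion}. Concretely, I would combine the normal form of Lemma~\ref{surjmainth} (Remark~\ref{surjremark1}) with the local computation of Lemma~\ref{lemma5.4}. The key observation is that for each vertex $v\in\cv_{2n-1}$, the composition
\[
\bz[\mathcal{GQ}_{2n-1}]\xrightarrow{\text{quotient}}\bz[\mathcal{GQ}_{2n-1}]/\langle G_J\mid v\notin J\rangle\xrightarrow{i_v}H^*(BT^n)
\]
is exactly the restriction $f\mapsto \varphi(f)(v)$ to the vertex $v$. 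Thus the product over all vertices factors the localization map $H^*(\mathcal{GQ}_{2n-1})\hookrightarrow\bigoplus_{v}H^*(BT^n)$ through $\varphi$, and since the localization map is injective, injectivity of $\varphi$ reduces to a statement I can check on the normal form.

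The main step would be as follows. Take $g\in\bz[\mathcal{GQ}_{2n-1}]$ with $\varphi(g)=0$. By Remark~\ref{surjremark1}, I may assume $g$ is represented by an $f\in\bz[\mathbf{M},\mathbf{D},Q]$ of the normal form \eqref{formatcohomclass}, i.e.
\[
f=h_1+h_2M_1+\cdots+h_nM_1\cdots M_{n-1}+h_{n+1}\Delta_{\{n+1,\ldots,2n\}}+\cdots+h_{2n}\Delta_{\{2n\}},
\]
with each $h_i\in H^*(BT^n)$. Now I would evaluate $\varphi(f)$ at the vertices in the same order $1,2,\ldots,n,n+1,\ldots,2n$ used in the surjectivity proof, exploiting that $M_j(i)=0$ for $i\le j$ (so $M_1\cdots M_{k-1}$ vanishes at vertices $1,\ldots,k-1$) and that $\Delta_{\{n+j,\ldots,2n\}}(i)=0$ for $i<n+j$. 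Reading off $\varphi(f)(1)=0$ forces $h_1=0$; then $\varphi(f)(2)=0$ forces $h_2M_1(2)=0$, and since $M_1(2)=\alpha(21)$ is a nonzerodivisor in the integral domain $H^*(BT^n)$, we get $h_2=0$; continuing inductively through all $2n$ vertices, each $h_i$ is killed in turn because the relevant product of axial functions is a nonzerodivisor. Hence all $h_i=0$, so $f=0$ as a representative and $g=0$ in $\bz[\mathcal{GQ}_{2n-1}]$.

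I expect the triangularity of the evaluation to be the heart of the argument: the normal form \eqref{formatcohomclass} is built precisely so that, when restricted to vertices in increasing order, the $k$-th term is the first one that is nonzero at the $k$-th relevant vertex, giving an upper-triangular system whose diagonal entries are nonzerodivisors. The one point requiring care is the transition from the $M$-terms to the $\Delta$-terms at vertex $n+1$, where I must confirm that $\Delta_{\{n+1,\ldots,2n\}}(n+1)=\prod_{k\in[n]}\alpha((n+1)k)$ is indeed a nonzero (hence nonzerodividing) product in $\bz[\alpha_1,\ldots,\alpha_n]$, and similarly for the later $\Delta$-terms. The main obstacle is therefore bookkeeping rather than conceptual: ensuring that the normal form genuinely has this triangular structure vertex-by-vertex and that no cross-terms between the $M$-block and the $\Delta$-block spoil the induction. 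Once the triangularity is verified, injectivity is immediate, and combined with Lemma~\ref{surjmainth} this proves Theorem~\ref{mainth}.
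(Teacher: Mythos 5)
Your proposal follows essentially the same route as the paper's proof: reduce to the normal form of Remark~\ref{surjremark1}, restrict $\varphi(f)$ to the vertices $1,\ldots,2n$ in order, and use the triangular vanishing pattern of the $M$- and $\Delta$-terms together with the fact that the nonzero ``diagonal'' values lie in the integral domain $H^*(BT^n)$ to kill each $h_i$ in turn. (One small slip: the vanishing you need is $M_i(i)=0$ rather than ``$M_j(i)=0$ for $i\le j$'' --- for $i<j$ one has $M_j(i)=\alpha(ij)\ne 0$ --- but since $M_i$ is a factor of $M_1\cdots M_{k-1}$ for $i\le k-1$, the product does vanish at those vertices and your argument goes through unchanged.)
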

\begin{proof}
We first assume that $n=1$. Then, it follows from Relations 1--4 that
\begin{align*}
\mathbb{Z}[\mathcal{GQ}_{1}]\simeq \mathbb{Z}[\Delta_{1}, \Delta_{\overline{1}}]/\langle \Delta_{1}\Delta_{\overline{1}}\rangle.
\end{align*}
Therefore, every element in $\mathbb{Z}[\mathcal{GQ}_{1}]$ can be written as
\begin{align}
\label{splitting}
k+f(\Delta_{1})+g(\Delta_{\overline{1}})
\end{align}
for some constant term $k\in\mathbb{Z}$ and two polynomials $f(\Delta_{1})\in \mathbb{Z}[\Delta_{1}]$ and $g(\Delta_{\overline{1}})\in \mathbb{Z}[\Delta_{\overline{1}}]$, where $f(\Delta_{1})$ and $g(\Delta_{\overline{1}})$ do not have constant terms.
Therefore, together with Corollary~\ref{cor_H-localizataion}, we have that the following homomorphism is injective:
	\[\Phi:\displaystyle  \bz[\mathcal{GQ}_{1}]\xrightarrow{\phi}H^*(\mathcal{GQ}_{1})\hookrightarrow\bigoplus_{v\in\cv_{1}}H^*(BT^{1})\simeq\bz[\Delta_{1}]\oplus \bz[\Delta_{\overline{1}}],\] 
where $k$ in \eqref{splitting} maps to the diagonal element. This establishes that $\varphi$ is injective.

We next assume that $n\ge 2$. Similarly, we claim that the following map $\Phi$ is injective:
	\[\Phi:\displaystyle  \bz[\mathcal{GQ}_{2n-1}]\xrightarrow{\varphi}H^*(\mathcal{GQ}_{2n-1})\hookrightarrow\bigoplus_{v\in\cv_{2n-1}}H^*(BT^{n})\simeq\bigoplus_{v\in\cv_{2n-1}}\bz[Q, M_i~|~i\in I_v].\] 
	
	Let \[r_u:\displaystyle \bigoplus_{v\in\cv_{2n-1}}\bz[Q, M_i~|~i\in I_v]\to\bz[Q, M_i~|~i\in I_u]\] be the restriction map at $u\in\cv_{2n-1}$. 
	For any $f\in \bz[\mathcal{GQ}_{2n-1}]$, let $f(u)$ denote the image of  $f$ by $r_u\circ \Phi$. Assume that $\Phi(f)=0$ for an element $f\in \bz[\mathcal{GQ}_{2n-1}]$. Therefore, 
	\begin{equation}\label{def-f}
	r_v\circ \Phi(f)=f(v)=0 \in \bz[Q, M_i~|~i\in I_v]~~~\text{for all } v\in\cv_{2n-1}.\end{equation}
Here, the element $f\in \bz[\mathcal{GQ}_{2n-1}]$ can be written as follows (see  \eqref{formatcohomclass} and Remark \ref{surjremark1}):
	\begin{equation}\label{inj1}
		\begin{split}
			f=h_1+h_2M_1&+\cdots+h_{n}M_1\cdots M_{n-1}\\&+h_{n+1}\Delta_{\{n+1,\ldots,2n\}}+\cdots+h_{2n}\Delta_{\{2n\}}+\mathfrak{I},
		\end{split}
	\end{equation}
where $h_i\in H^*(BT^n)\subset\bz[{\bf M}, ~{\bf D},~ Q]$ for all $i=1,\ldots, 2n$.
Note that, for all $i=1,\ldots,2n$, $\varphi(h_i)\in \bz[\alpha_1,\ldots,\alpha_n]$ (see \eqref{2.2}). Consequently, if there exists a vertex $u\in\mathcal{V}_{2n-1}$ such that $h_i(u)=0$, then $h_i$ must be identically zero, as $\varphi(h_i)$ is a constant in $H^*(\mathcal{GQ}_{2n-1})$.

Since 
	\begin{equation}\label{inj2}
		\Delta_{\{n+1,\ldots,2n\}}(i)=\cdots=\Delta_{\{2n\}}(i)=0 ~~\text{for all}~~ i=1,\ldots,n,
	\end{equation}
	and $M_1(1)=0$,  
using \eqref{def-f} and \eqref{inj1}, we have
	\[0=f(1)=h_1(1)+ 0+\cdots+0.\]
Hence, $h_1=0$ since $h_1$ is a constant map (cf. \eqref{eqn3.10}).
	Furthermore, by plugging $h_1=0$ into \eqref{inj1}, and using $M_2(2)=0$, we have 
	\[0=f(2)=h_2M_1(2)+ 0+\cdots+0 .\] 
	Notice that  ${h_{2}M_{1}(2)=h_2(2) M_1(2)}\in \bz[Q,~M_i~|~i\in I_2]$ (see \eqref{localization}) and $M_1(2)\not=0$. Hence $h_2(2)=0$ since $\bz[{Q}, M_i~|~i\in I_2]$ is an integral domain. Therefore, $h_2=0$.
	By following similar arguments for $i=3,\ldots,n$, one can have \[h_3=\cdots=h_{n}=0.\]
	Therefore, by \eqref{inj1},
	\begin{equation}\label{inj3}
		\begin{split}
			f=h_{n+1}\Delta_{\{n+1,\ldots,2n\}}+\cdots+h_{2n-1}\Delta_{\{2n-1,2n\}}+h_{2n}\Delta_{\{2n\}}+\mathfrak{I}.
		\end{split}
	\end{equation}
From \eqref{inj3}, we next obtain the following equality for the vertex $n+1\in\cv_{2n-1}$:
	\[	0=f(n+1)=h_{n+1}\Delta_{\{n+1,\ldots,2n\}}(n+1)+h_{n+2}\Delta_{\{n+2,\ldots,2n\}}(n+1)+\cdots+h_{2n}\Delta_{\{2n\}}(n+1).\]
	Since
	\[	\begin{split}
		\Delta_{\{n+2,\ldots,2n\}}&(n+1)=\cdots=\Delta_{\{2n\}}(n+1)=0,\\
		&\Delta_{\{n+1,\ldots,2n\}}(n+1)\not=0,    
	\end{split}\]
	we have $h_{n+1}=0$, by the same reason as above. 
	Iterating similar arguments for the other vertices $n+2,\ldots,2n\in\cv_{2n-1}$, we have  $h_i=0$ for all $n+2\leq i\leq 2n$. 
	
	Therefore $f=0$ in $\bz[\mathcal{GQ}_{2n-1}]=\bz[{\bf M,~D}, ~Q]/\mathfrak{I}$, which shows the injectivity of $\Phi$.	Hence, the lemma follows. 
\end{proof}

{\it Proof of Theorem \ref{mainth}. }
The result \eqref{mainth1} follows from Lemma \ref{surjmainth} and Lemma \ref{injectivitymainth}. 
Moreover, since the all isotropy subgroups of $(Q_{2n-1}, T^{n})$ are connected, 
it follows from \cite{fp07} that we have the isomorphism \eqref{mainth2}. \qed

\begin{rema}
	In \cite{ghz06}, Guillemin, Holm and Zara study the equivariant cohomology ring of a homogeneous
	space using a combinatorial approach. 
	They obtain the Betti numbers for a broad class of graph equivaraint cohomologies associated with homogeneous spaces, and establish an isomorphism between the Borel description and the graph equivariant cohomology.
	In contrast, our work provides explicit generators and relations for the graph equivariant cohomology associated with the odd-dimensional complex quadric. 
	In addition, we identify the generators corresponding to the subgraphs of the GKM graph.
\end{rema}

\section{Ordinary cohomology of odd dimensional complex quadrics}
\label{sect:6}

In the paper \cite{ku23}, we compare the ordinary cohomologies of $H^{*}(Q_{4n})$ and $H^{*}(Q_{4n-2})$ by using the graph equivariant cohomology. 
In this section, we compute the ordinary cohomology $H^{*}(Q_{2n-1})$ from the graph equivariant cohomology $H^{*}(\mathcal{GQ}_{2n-1})$.

Recall the ordinary cohomology ring formulas of $Q_{m}$.
According to \cite[Appendix C.3.4]{eh13}, we have  
\begin{align*}
& H^{*}(Q_{2n})\simeq \mathbb{Z}[c,x]/\langle c^{n+1}-2cx, x^{2}- \delta(n)c^{n}x \rangle, \\
& H^{*}(Q_{2n-1})\simeq \mathbb{Z}[c,x]/\langle c^{n}-2x, x^{2} \rangle,
\end{align*}
where $\deg c=2$, $\deg x=2n$ and 
\begin{align*}
\delta(n)=
\left\{
\begin{array}{ll}
0 & n\equiv 1 \mod 2 \\
1 & n\equiv 0 \mod 2 
\end{array}
\right.
\end{align*}

Since $H^{odd}(Q_{2n-1})=0$, the odd-dimensional complex quadric $Q_{2n-1}$ is the equivariantly formal GKM manifold (see \cite{gkm}).
Therefore, its ordinary cohomology is isomorphic to the quotient of $H_{T}^{*}(Q_{2n-1})$ by the ideal generated by $\pi^{*}(\alpha_{1}),\ldots, \pi^{*}(\alpha_{n})$, where $\alpha_{1},\ldots, \alpha_{n}\in H^{2}(BT^{n})$ are generators and $\pi^{*}:H^{*}(BT^{n})\to H_{T}^{*}(Q_{2n-1})$ is the induced (injective) homomorphism from the projection $\pi:ET^{n}\times_{T^{n}}Q_{2n-1}\to BT^{n}$.
Recall that the equivariant cohomology $H_{T}^{*}(Q_{2n-1})$ is defined by the ordinary cohomology of the Borel construction $ET^{n}\times_{T^{n}}Q_{2n-1}$.
Thus, using Theorem~\ref{mainth} and Proposition~\ref{H(BT)-generators}, we also have the ordinary cohomology of $Q_{2n-1}$ as follows.
\begin{coro}
\label{main-corollary}
The ordinary cohomology $H^{*}(Q_{2n-1})$ is isomorphic to $\mathbb{Z}[\mathcal{GQ}_{2n-1}]/\mathcal{J}$, where $\mathcal{J}$ is generated by
\begin{align*}
M_{i}-Q
\end{align*}
for $i=1,\ldots, n$.
\end{coro}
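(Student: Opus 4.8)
The plan is to combine the equivariant formality of $Q_{2n-1}$ with the presentation of $H^{*}_{T^{n}}(Q_{2n-1})$ furnished by Theorem~\ref{mainth}. As recalled just above, $H^{odd}(Q_{2n-1})=0$ makes $Q_{2n-1}$ equivariantly formal, so the restriction map attached to the fibre inclusion $Q_{2n-1}\hookrightarrow ET^{n}\times_{T^{n}}Q_{2n-1}$ is surjective with kernel the ideal generated by $\pi^{*}\big(H^{>0}(BT^{n})\big)$. Because $H^{*}(BT^{n})\simeq\bz[\alpha_{1},\ldots,\alpha_{n}]$ by \eqref{2.2}, this kernel is generated by $\pi^{*}(\alpha_{1}),\ldots,\pi^{*}(\alpha_{n})$, whence
\[
H^{*}(Q_{2n-1})\simeq H^{*}_{T^{n}}(Q_{2n-1})\big/\big\langle \pi^{*}(\alpha_{1}),\ldots,\pi^{*}(\alpha_{n})\big\rangle .
\]

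First I would transport this quotient to the combinatorial side. The graded $H^{*}(BT^{n})$-algebra isomorphism $H^{*}_{T^{n}}(Q_{2n-1})\simeq H^{*}(\mathcal{GQ}_{2n-1})$ from the beginning of Section~\ref{gen}, composed with the ring isomorphism $\varphi$ of Theorem~\ref{mainth}, is an isomorphism over $H^{*}(BT^{n})$; here the $H^{*}(BT^{n})$-algebra structure on the graph side is the one induced by the injection $\vartheta$ sending $x$ to the constant function with value $x$. Consequently $\pi^{*}$ corresponds to $\vartheta$ under these identifications, so $\pi^{*}(\alpha_{i})$ corresponds to $\vartheta(\alpha_{i})$. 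Now Proposition~\ref{H(BT)-generators} gives $\vartheta(\alpha_{i})=M_{i}-Q$, and hence the kernel ideal above pulls back under $\varphi$ precisely to $\mathcal{J}=\big\langle M_{i}-Q : i=1,\ldots,n\big\rangle\lhd\bz[\mathcal{GQ}_{2n-1}]$. Passing to quotients yields the desired isomorphism $H^{*}(Q_{2n-1})\simeq\bz[\mathcal{GQ}_{2n-1}]/\mathcal{J}$.

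The only step demanding real care---and what I regard as the (modest) main obstacle---is the identification $\pi^{*}=\vartheta$, i.e. the compatibility of the isomorphisms of Section~\ref{gen} and of Theorem~\ref{mainth} with the two $H^{*}(BT^{n})$-algebra structures. This is immediate once one recalls that the $H^{*}(BT^{n})$-structure on $H^{*}(\mathcal{GQ}_{2n-1})$ is by definition the one induced by $\vartheta$ and that $\varphi$ fixes the generators $Q$ and $M_{i}$; the remainder is bookkeeping. As a consistency check, one can verify that the classical relations reappear: setting $c=Q$ and $x=\Delta_{K}$ for any admissible $K$ with $|K|=n$, the relations $M_{i}-Q$ force $M_{v}=Q$ for all $v$ (by Lemma~\ref{relation2}), so that Relation~3 (Lemma~\ref{lemma-relation3}) becomes $2x=\prod_{i\in K^{c}}M_{i}=Q^{n}=c^{n}$; together with $x^{2}=0$, which holds for degree reasons since $\dim_{\br}Q_{2n-1}=4n-2$, this recovers the formula $\bz[c,x]/\langle c^{n}-2x,\,x^{2}\rangle$ from the start of this section.
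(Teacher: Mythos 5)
Your argument is correct and follows the same route as the paper: equivariant formality gives $H^{*}(Q_{2n-1})\simeq H^{*}_{T^{n}}(Q_{2n-1})/\langle\pi^{*}(\alpha_{1}),\ldots,\pi^{*}(\alpha_{n})\rangle$, and Theorem~\ref{mainth} together with Proposition~\ref{H(BT)-generators} (i.e.\ $\alpha_{i}=M_{i}-Q$) transports this to $\bz[\mathcal{GQ}_{2n-1}]/\mathcal{J}$. Your extra care about identifying $\pi^{*}$ with $\vartheta$, and the closing consistency check recovering $\bz[c,x]/\langle c^{n}-2x,x^{2}\rangle$, are both sound but do not change the substance of the paper's argument.
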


To reduce the relations of $\mathbb{Z}[\mathbf{M}, \mathbf{D}, Q]$, 
we first show the following lemma.
\begin{lema}
\label{final-lemma}
If $K, H\subset \mathcal{V}_{2n-1}$ are the subsets such that  
$|K|=|H|=n$ and $\{i, \overline{i}\}\not\not\subset K, H$ for every $i\in \mathcal{V}_{2n-1}$,
then, there is the following formula in $\mathbb{Z}[\mathcal{GQ}_{2n-1}]/\mathcal{J}$:
\begin{equation}\label{Lem:6.2_1}
\Delta_{K}=\Delta_{H}.
\end{equation}
Furthermore, if $L\subset \mathcal{V}_{2n-1}$ satisfies 
$|L|=n-l$ for some $1\le l\le n$ and $\{i, \overline{i}\}\not\not\subset L$ for every $i\in \mathcal{V}_{2n-1}$, then 
\begin{align*}
\Delta_{L}=\Delta_{K}Q^{l}.
\end{align*}
\end{lema}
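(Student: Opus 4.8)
The plan is to prove both identities in $\bz[\mathcal{GQ}_{2n-1}]/\mathcal{J}$ by a ``flip'' reduction together with repeated use of Relation~4 (Lemma~\ref{lem-relation4}). I would begin by recording the basic reduction $M_v\equiv Q\pmod{\mathcal{J}}$ for \emph{every} $v\in\cv_{2n-1}$: by definition $\mathcal{J}=\langle M_i-Q : i\in[n]\rangle$, and since $\alpha_i=M_i-Q$ (Proposition~\ref{H(BT)-generators}) we get $\alpha_i\in\mathcal{J}$; Relation~2 (Lemma~\ref{relation2}) then gives $M_{\overline i}-Q=Q-M_i=-\alpha_i\in\mathcal{J}$, so indeed $\alpha_v\in\mathcal{J}$ and $M_v\equiv Q$ for all $v$.

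For the first assertion, note that the hypotheses $|K|=n$ and $\{i,\overline i\}\not\subset K$ force $K$ to contain exactly one vertex of each pair $\{i,\overline i\}$, whence $K^{c}=\{\overline v : v\in K\}$, and likewise for $H$. Thus any two admissible $n$-subsets are linked by a chain of single flips $K\rightsquigarrow(K\setminus\{v\})\cup\{\overline v\}$, and by transitivity it suffices to treat $H=(K\setminus\{v\})\cup\{\overline v\}$ for one $v\in K$. Setting $C=K\cap H$ (so $|C|=n-1$) and $C'=\{\overline c : c\in C\}=K^{c}\cap H^{c}$, the crux is the \emph{exact} identity
\begin{equation}\label{eq:flip-id}
\Delta_K-\Delta_H=-\alpha_v\prod_{i\in C'}M_i ,
\end{equation}
which I would check as an equality of functions in $H^{*}(\mathcal{GQ}_{2n-1})$; since $\varphi$ is an isomorphism (Theorem~\ref{mainth}), \eqref{eq:flip-id} then holds in $\bz[\mathcal{GQ}_{2n-1}]$ with $\alpha_v$ read as $M_v-Q$. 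Granting \eqref{eq:flip-id}, its right-hand side lies in $\mathcal{J}$ because $\alpha_v\in\mathcal{J}$, so $\Delta_K\equiv\Delta_H\pmod{\mathcal{J}}$, proving \eqref{Lem:6.2_1}.

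Verifying \eqref{eq:flip-id} vertex by vertex is routine except in one case, which is the main obstacle. At $w\in C'$ both sides vanish (the left since $w\notin K\cup H$, the right since $M_w$ is a factor and $M_w(w)=0$), and at $w\in\{v,\overline v\}$ each side reduces to a product over $C'$ times the single factor $\pm\alpha_v$ coming from $\alpha(v\overline v)$ (resp.\ $\alpha(\overline v v)$), and these agree. The delicate case is $w\in C$: there the difference of the two defining products contributes $\alpha(w\overline v)-\alpha(wv)=-2\alpha_v$, while on the right the factor $\prod_{i\in C'}M_i$ carries the ``diagonal'' term $M_{\overline w}(w)=2\alpha_{\overline w}=-2\alpha_w$ (as $\overline w\in C'$), which must cancel against the factor $\alpha(w\overline w)=-\alpha_w$ appearing on the left. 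Matching these two factors of $2$ is exactly what makes the \emph{exact} identity \eqref{eq:flip-id} hold; a purely formal argument via Relation~3 only delivers the doubled statement $2\Delta_K\equiv Q^{n}\equiv 2\Delta_H$, from which one cannot cancel the $2$ directly. This bookkeeping is where I expect the real work to be.

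For the second assertion I would again use Relation~4. Given admissible $L$ with $|L|=n-l$, I can choose vertices $v_1,\dots,v_l\notin L$ with $v_i+v_j\neq 2n+1$ so that $K':=L\cup\{v_1,\dots,v_l\}$ is an admissible $n$-subset (exactly as in the proof of Lemma~\ref{lemma5.4}); iterating Lemma~\ref{lem-relation4} yields $\Delta_L=\Delta_{K'}M_{v_1}\cdots M_{v_l}$. Reducing modulo $\mathcal{J}$ and using $M_{v_i}\equiv Q$ gives $\Delta_L\equiv\Delta_{K'}Q^{l}$, and the first assertion lets me replace $\Delta_{K'}$ by $\Delta_K$, so that $\Delta_L=\Delta_K Q^{l}$ in $\bz[\mathcal{GQ}_{2n-1}]/\mathcal{J}$.
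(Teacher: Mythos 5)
Your proposal is correct, and for the first identity \eqref{Lem:6.2_1} it takes a genuinely different --- and more careful --- route than the paper. The paper's own proof simply applies Lemma~\ref{lemma-relation3} (Relation~3) to obtain $2\Delta_{K}=\prod_{j\in K^{c}}M_{j}=Q^{n}=\prod_{j\in H^{c}}M_{j}=2\Delta_{H}$ in $\bz[\mathcal{GQ}_{2n-1}]/\mathcal{J}$ and declares the first statement established; as you observe, cancelling the factor $2$ is not automatic in an arbitrary quotient ring (it can be justified a posteriori because Corollary~\ref{main-corollary} identifies the quotient with $H^{*}(Q_{2n-1})$, which is torsion-free by the classical formula, but the paper does not say this). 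Your flip identity $\Delta_{K}-\Delta_{H}=-\alpha_{v}\prod_{i\in C'}M_{i}$, verified vertexwise in $H^{*}(\mathcal{GQ}_{2n-1})$ and transported back through the isomorphism of Theorem~\ref{mainth}, yields $\Delta_{K}\equiv\Delta_{H}$ directly because $\alpha_{v}=\pm(M_{\overline{v}}-Q)\in\mathcal{J}$, with no division by $2$; I checked the vertexwise computation and it is right --- in the delicate case $w\in C$ the factor $\alpha(w\overline{v})-\alpha(wv)=-2\alpha_{v}$ on the left matches the factor $M_{\overline{w}}(w)=2\alpha_{\overline{w}}=-2\alpha_{w}$ carried by $\prod_{i\in C'}M_{i}$ on the right, exactly as you describe, and the reduction of the general case to a single flip by transitivity is immediate since an admissible $n$-subset is a choice of one vertex from each pair $\{i,\overline{i}\}$. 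What the paper's argument buys is brevity; what yours buys is an exact identity already valid in $\bz[\mathcal{GQ}_{2n-1}]$ before passing to the quotient, so no torsion-freeness input is needed. For the second identity your argument (enlarge $L$ to an admissible $n$-set $K'$ via Lemma~\ref{lem-relation4}, reduce $M_{v_{i}}\equiv Q$ using $\mathcal{J}$ and Relation~2, then invoke the first part to replace $\Delta_{K'}$ by $\Delta_{K}$) is essentially the paper's.
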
 
\begin{proof}
By the definition of $\mathcal{J}$ and Lemma~\ref{relation2} (Relation~2), 
in $\mathbb{Z}[\mathcal{GQ}_{2n-1}]/\mathcal{J}$, we have 
\begin{align}
\label{deg2-ordinary-generator}
Q=M_{1}=M_{2}=\cdots =M_{n}=M_{n+1}=\cdots =M_{2n}.
\end{align}
Moreover, by using Lemma~\ref{lemma-relation3} (Relation 3), we have that
\begin{align*}
2\Delta_{K}=\prod_{j\in K^{c}}M_{j}=
Q^{n}=\prod_{j\in H^{c}}M_{j}=2\Delta_{H}.
\end{align*}
This establishes the 1st statement.

The 2nd statement follows by using 
\eqref{Lem:6.2_1}, \eqref{deg2-ordinary-generator}
and Relation 4 repeatedly. 
\end{proof}

By Remark~\ref{surjremark1}, Lemma~\ref{final-lemma} and \eqref{deg2-ordinary-generator}, every element $f\in \mathbb{Z}[\mathcal{GQ}_{2n-1}]/\mathcal{J}$ can be written as 
\begin{align*}
f=k_{0}+k_{1}Q+\cdots + k_{n-1}Q^{n-1}+k_{n}\Delta_{K}+k_{n+1}\Delta_{K}Q+\cdots +k_{2n-1}\Delta_{K}Q^{n-1},
\end{align*}
for some unique $k_{0},\ldots, k_{2n-1}\in \mathbb{Z}$.
Therefore, there is the following isomorphism as a $\mathbb{Z}$-module:
\begin{align*}
&\mathbb{Z}[\mathcal{GQ}_{2n-1}]/\mathcal{J}\simeq \mathbb{Z}\oplus \mathbb{Z}Q+\cdots \oplus \mathbb{Z}Q^{n-1}\oplus \mathbb{Z}\Delta_{K}\oplus \mathbb{Z}\Delta_{K}Q\oplus \cdots \oplus \mathbb{Z}\Delta_{K}Q^{n-1} \\
& \simeq H^{*}(Q_{2n-1})\simeq \mathbb{Z}\oplus \mathbb{Z}c+\cdots \oplus \mathbb{Z}c^{n-1}\oplus \mathbb{Z}x\oplus \mathbb{Z}xc\oplus \cdots \oplus \mathbb{Z}xc^{n-1}.
\end{align*} 
By Lemma~\ref{lemma-relation3} (Relation 3), we have that 
\begin{align*}
Q^{n}=\prod_{i\in K^{c}}M_i^n=2\Delta_{K}.
\end{align*}
Together with Lemma~\ref{lem-relation1} (Relation 1), we also have that
\begin{align*}
0=\prod_{i\in \mathcal{V}_{2n-1}}M_{i}=Q^{2n}=4\Delta_{K}\Delta_{K^{c}}=4\Delta_{K}^{2}.
\end{align*}
This shows that $\Delta_{K}^{2}=0$.
Consequently we have the ordinary cohomology.
\begin{propo}
\label{final-prop}
There is the following isomorphism :
\begin{align*}
H^{*}(Q_{2n-1})\simeq \mathbb{Z}[Q,\Delta_{K}]/\langle Q^{n}-2\Delta_{K}, \Delta_{K}^{2} \rangle,
\end{align*}
where $\deg Q=2$ and $\deg \Delta_{K}=2n$.
\end{propo}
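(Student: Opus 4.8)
The plan is to construct an explicit ring homomorphism from the right-hand ring onto $\mathbb{Z}[\mathcal{GQ}_{2n-1}]/\mathcal{J}\cong H^*(Q_{2n-1})$ (Corollary~\ref{main-corollary}), and then to identify it as an isomorphism by checking that it carries a free $\mathbb{Z}$-basis of the source onto the free $\mathbb{Z}$-basis of the target that was already exhibited in the paragraph preceding the statement. The two defining relations of the source have been verified to hold in the target just above: $Q^n=2\Delta_K$ (Relation~3 together with \eqref{deg2-ordinary-generator}) and $\Delta_K^2=0$ (Relation~1). Hence the assignment $Q\mapsto Q$, $\Delta_K\mapsto\Delta_K$ induces a well-defined ring homomorphism $\Psi:\mathbb{Z}[Q,\Delta_K]/\langle Q^n-2\Delta_K,\Delta_K^2\rangle\to\mathbb{Z}[\mathcal{GQ}_{2n-1}]/\mathcal{J}$.

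First I would argue that the source is a free $\mathbb{Z}$-module with basis $\{Q^a,\,Q^a\Delta_K:0\le a\le n-1\}$. Using $\Delta_K^2=0$ to discard powers $\Delta_K^{\ge 2}$, the relation $Q^n=2\Delta_K$ to rewrite $Q^{\ge n}$, and the consequence $Q^n\Delta_K=2\Delta_K^2=0$ to kill $Q^{\ge n}\Delta_K$, these $2n$ monomials span the source. Since $\deg Q^a=2a$ and $\deg Q^a\Delta_K=2a+2n$ for $0\le a\le n-1$ are pairwise distinct, and both defining relations are homogeneous, no nontrivial $\mathbb{Z}$-relation can hold among them; hence they form a free basis of rank $2n$.

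I would then invoke that the target $\mathbb{Z}[\mathcal{GQ}_{2n-1}]/\mathcal{J}$ is, by the $\mathbb{Z}$-module isomorphism displayed just before the statement, free of rank $2n$ with basis $\{Q^a,\,Q^a\Delta_K:0\le a\le n-1\}$ (matching $\{c^a,\,xc^a\}$ in $H^*(Q_{2n-1})$). Since $\Psi$ sends each source basis element $Q^a$ (resp.\ $Q^a\Delta_K$) to the target basis element of the same name, it carries a free basis bijectively to a free basis, and is therefore a ring isomorphism; this proves the proposition.

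The step I expect to carry the real content is the freeness computation over $\mathbb{Z}$: the factor $2$ in $Q^n=2\Delta_K$ could in principle introduce torsion or a nontrivial integral relation, and it is precisely the fact that all surviving monomials lie in pairwise distinct degrees—together with the homogeneity of the two relations—that rules this out and certifies that $\{Q^a,\,Q^a\Delta_K\}_{0\le a\le n-1}$ is an honest free basis rather than merely a $\mathbb{Q}$-basis. Everything else reduces to the spanning and relation computations already assembled in this section.
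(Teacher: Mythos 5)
Your proposal is correct and follows essentially the same route as the paper: the paper's proof is exactly the preceding discussion (the free $\mathbb{Z}$-module decomposition of $\mathbb{Z}[\mathcal{GQ}_{2n-1}]/\mathcal{J}$ with basis $\{Q^{a},\Delta_{K}Q^{a}\}_{0\le a\le n-1}$ plus the two relations $Q^{n}=2\Delta_{K}$ and $\Delta_{K}^{2}=0$), which you merely package as an explicit homomorphism matching bases. One small remark: your freeness argument for the source via ``pairwise distinct degrees'' only rules out cross-term relations, not torsion of individual monomials, but this step is in fact redundant --- spanning of the source together with $\Psi$ carrying the spanning set onto a free basis of the target already forces $\Psi$ to be an isomorphism.
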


\section{GKM graphs of even- and odd-dimensional complex quadrics}
\label{sect:7}
 
By Remark~\ref{rem_submanifold_evem-dim-quadric}, there is a GKM subgraph $\mathcal{GQ}_{2n-2}$ of $\mathcal{GQ}_{2n-1}$. 
This induces a homomorphism (see Section~\ref{sect:7.2} for details)
\begin{align*}
H^{*}(\mathcal{GQ}_{2n-1})\to H^{*}(\mathcal{GQ}_{2n-2}).
\end{align*}
In this section, we compare the graph equivariant cohomologies induced by even- and odd-dimensional complex quadrics.

We first recall the restricted $T^n$-action of \eqref{introtorusaction} on the even-dimensional complex quadric
\begin{align*}
Q_{2n-2}:=\Big\{[z_1:\cdots:z_{2n}:0]\in\mathbb{CP}^{2n}~|~\sum_{i=1}^{n}z_iz_{2n+1-i}=0\Big\}\subset Q_{2n-1}.
\end{align*} 
By restricting \eqref{introtorusaction} to $Q_{2n-2}$ we have the following action 
\begin{equation}\label{sect7torusaction}
	[z_1:\cdots:z_{2n}:0]\cdot(t_1,\ldots,t_{n}):=[z_1t_1:\cdots:z_{n}t_{n}:t_{n}^{-1}z_{n+1}:\cdots:t_1^{-1}z_{2n}:0].
\end{equation}
We use the notation $(Q_{2n-2}, T^{n})$ to denote the $T^n$-action on $Q_{2n-2}$ defined by \eqref{sect7torusaction}.
It is easy to check that the kernel of $(Q_{2n-2}, T^{n})$, i.e., the intersection of isotropy subgroups of all elements in $Q_{2n-2}$, is
$\Delta(\mathbb{Z}_{2}):=\{(1,\ldots,1), (-1,\ldots,-1)\}\subset T^{n}$. 
Therefore, this action is not effective.

\subsection{The GKM graph $\mathcal{GQ}_{2n-2}$ induced from the non-effective $T^{n}$-action on $Q_{2n-2}$}
\label{sect:7.1} 
In this subsection, we compute the GKM graph of $(Q_{2n-2},T^{n})$ (cf. the GKM graph of $Q_{2n-2}$ with the effective torus $T^{n}$-actions in \cite[Section 2.1]{ku23}).
Let $\Gamma_{2n-2}\subset \Gamma_{2n-1}$ be the subgraph consisting of: 
\begin{description}
\item[Vertices] $\mathcal{V}_{2n-2}:=\mathcal{V}_{2n-1}$;
\item[Edges] $\mathcal{E}_{2n-2}:=\mathcal{E}_{2n-1}\setminus \{i\overline{i}\ |\ 1\le i\le n\}$.
\end{description}
The GKM graph $\mathcal{GQ}_{2n-2}:=(\Gamma_{2n-2}, \alpha')$ is defined by the restricted axial function
\begin{align*}
\alpha':=\alpha|_{\mathcal{E}_{2n-2}}:\mathcal{E}_{2n-2}\to H^{2}(BT^{n}),
\end{align*}
where $\alpha:\mathcal{E}_{2n-1}\to H^{2}(BT^{n})$ is the axial function of $\mathcal{GQ}_{2n-1}:=(\Gamma_{2n-1}, \alpha)$ (see Section~\ref{sect:2.1}).
More precisely, it is defined by the following equations:
\begin{itemize}
\item $\alpha'(ij)=\alpha(ij)=-\alpha_i+\alpha_j$ for $1\leq i\neq j\leq n$;
\item $\alpha'(i\overline{j})=\alpha(i\overline{j})=-\alpha_i-\alpha_j$ for $1\leq i\neq j\leq n$;
\item $\alpha'(\overline{i}j)=\alpha(\overline{i}j)=\alpha_i+\alpha_j$ for $1\leq i\neq j\leq n$;
\item $\alpha'(\overline{i}\overline{j})=\alpha(\overline{i}\overline{j})=\alpha_i-\alpha_j$ for $1\leq i\neq j\leq n$.
\end{itemize}
Since $(\Gamma_{2n-2},\alpha')$ is defined by the restriction of $(\Gamma_{2n-1},\alpha)$, there is an inclusion from $\mathcal{GQ}_{2n-2}$ to $\mathcal{GQ}_{2n-1}$ (cf. Figure \ref{lowest-example for inclusion} for an example)
We denote it by 
\begin{align*}
\iota:\mathcal{GQ}_{2n-2}\hookrightarrow \mathcal{GQ}_{2n-1}.
\end{align*}
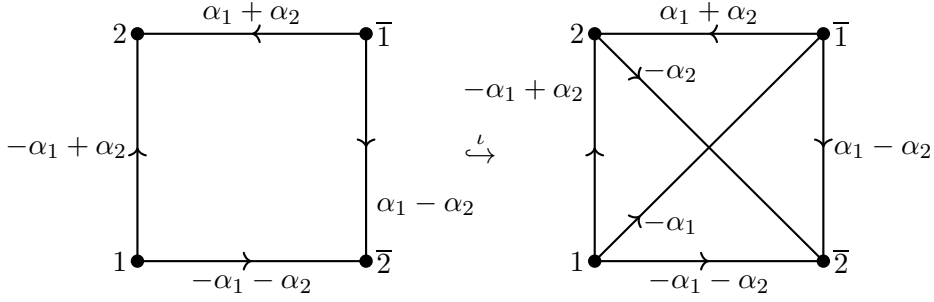
\begin{figure}[H]
\begin{tikzpicture}
\begin{scope}[xscale=0.5, yscale=0.5]
\coordinate (a) at (-15,-3);
\coordinate (b) at (-15,3);
\coordinate (c) at (-9,3);
\coordinate (d) at (-9,-3);

\fill(a) circle (5pt);
\node[left] at (a) {$1$};
\fill(b) circle (5pt);
\node[left] at (b) {$2$};
\fill(c) circle (5pt);
\node[right] at (c) {$\overline{1}$}; 
\fill(d) circle (5pt);
\node[right] at (d) {$\overline{2}$}; 

\draw[thick, decoration={markings,
    mark=at position 0.5 with \arrow{>}},
    postaction=decorate](a)--(b);
\node[left] at (-15,0) {$-\alpha_{1}+\alpha_{2}$};

\draw[thick, decoration={markings,
    mark=at position 0.5 with \arrow{>}},
    postaction=decorate](a)--(d);
\node[right] at (-9,-1.5) {$\alpha_{1}-\alpha_{2}$};

\draw[thick, decoration={markings,
    mark=at position 0.5 with \arrow{>}},
    postaction=decorate](c)--(b);
\node[above] at (-12,3) {$\alpha_{1}+\alpha_{2}$};

\draw[thick, decoration={markings,
    mark=at position 0.5 with \arrow{>}},
    postaction=decorate](c)--(d);
\node[below] at (-12,-3) {$-\alpha_{1}-\alpha_{2}$};

\node at (-6,0) {$\stackrel{\iota}{\hookrightarrow}$};

\coordinate (1) at (-3,-3);
\coordinate (2) at (-3,3);
\coordinate (3) at (3,3);
\coordinate (4) at (3,-3);

\fill(1) circle (5pt);
\node[left] at (1) {$1$};
\fill(2) circle (5pt);
\node[left] at (2) {$2$};
\fill(3) circle (5pt);
\node[right] at (3) {$\overline{1}$}; 
\fill(4) circle (5pt);
\node[right] at (4) {$\overline{2}$}; 

\draw[thick, decoration={markings,
    mark=at position 0.5 with \arrow{>}},
    postaction=decorate](1)--(2);
\node[left] at (-3,1.5) {$-\alpha_{1}+\alpha_{2}$};

\draw[thick, decoration={markings,
    mark=at position 0.2 with \arrow{>}},
    postaction=decorate](1)--(3);
\node[right] at (-2,-2) {$-\alpha_{1}$};

\draw[thick, decoration={markings,
    mark=at position 0.5 with \arrow{>}},
    postaction=decorate](1)--(4);
\node[right] at (3,0) {$\alpha_{1}-\alpha_{2}$};

\draw[thick, decoration={markings,
    mark=at position 0.5 with \arrow{>}},
    postaction=decorate](3)--(2);
\node[above] at (0,3) {$\alpha_{1}+\alpha_{2}$};

\draw[thick, decoration={markings,
    mark=at position 0.2 with \arrow{>}},
    postaction=decorate](2)--(4);
\node[right] at (-2,2) {$-\alpha_{2}$};

\draw[thick, decoration={markings,
    mark=at position 0.5 with \arrow{>}},
    postaction=decorate](3)--(4);
\node[below] at (0,-3) {$-\alpha_{1}-\alpha_{2}$};

\end{scope}
\end{tikzpicture}
\caption{The inclusion of GKM graphs $\iota:\mathcal{GQ}_{2}\hookrightarrow \mathcal{GQ}_{3}$ for $n=2$, where $\alpha_{1}$ and $\alpha_{2}$ are generators in $H^{2}(BT^{2})$.}
\label{lowest-example for inclusion}
\end{figure}

\subsection{The equivariant cohomology of $H^{*}_{T}(Q_{2n-2})$}
\label{sect:7.2} 
In \cite{ku23}, the equivariant cohomology of $Q_{2n-2}$ with effective $T^{n}$-actions is computed using the graph equivariant cohomology of its GKM graph. 
In this subsection, in Lemma~\ref{key-lemma}, we prove that the equivariant cohomology of the non-effective $(Q_{2n-2},T^{n})$ is also computed using the graph equivariant cohomology of $\mathcal{GQ}_{2n-2}$.

The following fact from \cite[Corollary 2.2]{fp11} is the essential result to prove it.
\begin{guess}[Franz-Puppe]
\label{FPT}
Let $M$ be a $T$-space such that $H^{*}_{T}(X)$ is free over $H^{*}(BT)$.
If the isotropy group of each $x\not\in X_{1}:=\{x\in X\ |\ \dim T(x)\le 1\}$ is contained in a proper subtorus of $T$, then the following sequence is exact:
\begin{align*}
0\longrightarrow H_{T}^{*}(X;\mathbb{Z})\stackrel{i^{*}}{\longrightarrow} H_{T}^{*}(X^{T};\mathbb{Z})\stackrel{\delta}{\longrightarrow} H_{T}^{*+1}(X_{1},X^{T};\mathbb{Z}),
\end{align*}
where the homomorphism $i^{*}$ is induced by the inclusion $i:X^T\to X$, and the homomorphism $\delta$ is the connecting homomorphism of the long exact sequence of cohomology of the pair $(X_{1},X^{T})$.
\end{guess}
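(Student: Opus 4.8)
The plan is to recognise the assertion as a form of the Chang--Skjelbred lemma and to prove it by combining the Borel localization theorem with a reflexivity argument over the ring $R:=H^{*}(BT)$, which here is a polynomial ring. Throughout I write $X_{i}:=\{x\in X\ |\ \dim T(x)\le i\}$, so that
\[
X^{T}=X_{0}\subseteq X_{1}\subseteq\cdots\subseteq X_{n}=X,
\]
and I recall that $\delta$ is the connecting homomorphism of the pair $(X_{1},X_{0})$, while the restriction $j^{*}:H^{*}_{T}(X_{1})\to H^{*}_{T}(X_{0})$ satisfies $\ker\delta=\operatorname{im}j^{*}$ by exactness of the long exact sequence of that pair.

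First I would dispose of the two easy points. Since $H^{*}_{T}(X)$ is free over $R$ it is torsion-free, and by the Borel localization theorem $i^{*}$ becomes an isomorphism after inverting the nonzero homogeneous elements of $R$; hence $\ker i^{*}$ is a torsion submodule of a torsion-free module and therefore vanishes, which gives exactness at $H^{*}_{T}(X)$. For the inclusion $\operatorname{im}i^{*}\subseteq\ker\delta$, I observe that $i^{*}$ factors as $H^{*}_{T}(X)\to H^{*}_{T}(X_{1})\xrightarrow{\,j^{*}\,}H^{*}_{T}(X_{0})$, and $\delta\circ j^{*}=0$, so $\delta\circ i^{*}=0$.

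The substantial step is the reverse inclusion $\ker\delta\subseteq\operatorname{im}i^{*}$: every class on $X^{T}$ that extends over the ``one-skeleton'' $X_{1}$ must already extend over all of $X$. This is exactly where the isotropy hypothesis is used. Each $x\notin X_{1}$ has $\dim T(x)\ge 2$, so its isotropy has rank $\le n-2$, and the assumption that it lies in a proper subtorus guarantees that the corresponding module $H^{*}(BT_{x})$ is supported on a linear subvariety of $\operatorname{Spec}R$ of dimension $\le n-2$ (this is what rules out full $2$-torsion type isotropy, whose classifying space would be supported in codimension $0$ at a torsion prime). Building $H^{*}_{T}(X,X_{1})$ from the subquotients $H^{*}_{T}(X_{k},X_{k-1})$ with $k\ge 2$ through the filtration spectral sequence, one concludes that the obstruction group $H^{*}_{T}(X,X_{1})$ is supported in codimension $\ge 2$. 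Localizing the long exact sequence of the pair $(X,X_{1})$ at an arbitrary height-one homogeneous prime $\mathfrak{p}\subset R$ (corresponding to a codimension-one subtorus) therefore annihilates all higher strata, so the restriction $H^{*}_{T}(X)_{\mathfrak{p}}\to H^{*}_{T}(X_{1})_{\mathfrak{p}}$ is an isomorphism; lifting $c\in\ker\delta$ to $\tilde c\in H^{*}_{T}(X_{1})$ and pulling $\tilde c$ back locally shows that $c$ lies in the image of $i^{*}$ after localizing at $\mathfrak{p}$.

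Finally I would close the argument using freeness. Via the injectivity of $i^{*}$ and the localization isomorphism I regard $H^{*}_{T}(X)$ and all its localizations as $R$-submodules of the common generic fibre $H^{*}_{T}(X^{T})\otimes_{R}\operatorname{Frac}(R)$. The previous step shows $c\in H^{*}_{T}(X)_{\mathfrak{p}}$ for every height-one $\mathfrak{p}$, and since a free (hence reflexive) $R$-module equals the intersection $\bigcap_{\operatorname{ht}\mathfrak{p}=1}H^{*}_{T}(X)_{\mathfrak{p}}$ formed there, we obtain $c\in H^{*}_{T}(X)=\operatorname{im}i^{*}$ globally. I expect the main obstacle to be precisely the support computation for the higher strata together with this reflexivity intersection; the remaining steps are formal. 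Equivalently, one may simply invoke the Atiyah--Bredon--Franz--Puppe theorem: freeness of $H^{*}_{T}(X)$ makes it an $n$-th syzygy, and exactness at the first two terms of the Atiyah--Bredon complex is exactly the second-syzygy property.
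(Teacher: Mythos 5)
The paper does not actually prove this statement: Theorem~\ref{FPT} is imported wholesale as \cite[Corollary~2.2]{fp11}, and the authors' entire ``proof'' is that citation. Your proposal reconstructs the argument, and what you sketch is essentially the standard integral Chang--Skjelbred/Atiyah--Bredon argument that Franz and Puppe themselves use: injectivity of $i^{*}$ from freeness plus Borel localization, $\delta\circ i^{*}=0$ by factoring through $X_{1}$, and the reverse inclusion $\ker\delta\subseteq\operatorname{im}i^{*}$ by showing that $H^{*}_{T}(X,X_{1})$ is supported in codimension $\ge 2$ over $R=H^{*}(BT;\mathbb{Z})$ and then closing with reflexivity of the free module $H^{*}_{T}(X)$; your final remark that this is exactly the second-syzygy portion of Atiyah--Bredon exactness is the cleanest formulation. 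The outline is sound, but two points need tightening. First, the claim that $H^{*}(BT_{x})$ is ``supported on a linear subvariety of dimension $\le n-2$'' is not literally correct over $\mathbb{Z}$: when $T_{x}$ is disconnected, its $p$-torsion can be supported on a subvariety of $V(p)\subset\operatorname{Spec}R$ of dimension up to $n-1$; what the proper-subtorus hypothesis actually delivers is that every elementary abelian $p$-subgroup of $T_{x}$ has rank at most $n-1$, so this locus still has codimension $\ge 2$ in $\operatorname{Spec}R$ (which has Krull dimension $n+1$) --- and codimension $\ge 2$ is all your argument uses, so it survives. Second, the parenthetical ``corresponding to a codimension-one subtorus'' is misleading: the height-one primes of $\mathbb{Z}[\alpha_{1},\ldots,\alpha_{n}]$ over which the reflexivity intersection must run include the primes $(p)$ for integer primes $p$, which correspond to no subtorus and are precisely where the integral hypothesis has content; since you do quantify over arbitrary height-one primes, this is a gloss to delete rather than a gap. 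You should also record the standing hypotheses on $X$ (compact, or finite-dimensional with finitely many orbit types) required for the localization theorem with integer coefficients.
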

By using this theorem, we obtain the following lemma.
\begin{lema}
\label{key-lemma}
The equivariant cohomology $H_{T^n}^{*}(Q_{2n-2})$
is isomorphic to 
the graph equivariant cohomology $H^{*}(\mathcal{GQ}_{2n-2})$, i.e., 
\begin{align}
H_{T^{n}}^{*}(Q_{2n-2})&\simeq 
H^{*}(\mathcal{GQ}_{2n-2}) \nonumber \\
&:=\{f:\mathcal{V}_{2n-2}\to H^{*}(BT)\ |\ f(p)-f(q)\equiv 0 \mod\alpha(pq),\  \text{for all $pq\in \mathcal{E}_{2n-2}$} \} \label{congrel_v2}.
\end{align}
\end{lema}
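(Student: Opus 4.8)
The plan is to apply the Franz--Puppe exact sequence (Theorem~\ref{FPT}) to $X=Q_{2n-2}$ equipped with the non-effective $T^{n}$-action \eqref{sect7torusaction}, and then to identify the kernel of the connecting homomorphism $\delta$ with the congruence conditions \eqref{congrel_v2} that define $H^{*}(\mathcal{GQ}_{2n-2})$. Concretely, injectivity of the restriction $i^{*}$ will realize $H^{*}_{T^{n}}(Q_{2n-2})$ as a subring of the fixed-point cohomology, and exactness will identify that subring as exactly those functions on the vertices satisfying the edge congruences.

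First I would verify the two hypotheses of Theorem~\ref{FPT}. The freeness is immediate: since $2n-2$ is even, $Q_{2n-2}$ has $H^{\mathrm{odd}}(Q_{2n-2})=0$, so the Serre spectral sequence of its Borel fibration collapses for any torus action and $H^{*}_{T^{n}}(Q_{2n-2})$ is free over $H^{*}(BT^{n})$. The isotropy condition is the delicate point and the main obstacle, since here non-effectiveness genuinely enters: the kernel $K:=\Delta(\mathbb{Z}_{2})=\{(1,\dots,1),(-1,\dots,-1)\}$ sits inside every isotropy group, so $T^{n}_{x}$ need not be connected. Note first that the orbits of the effective action of \cite{ku23} and of \eqref{sect7torusaction} coincide as subsets, so $X^{T}=Q_{2n-2}^{T}$ and $X_{1}=\{x:\dim T^{n}(x)\le 1\}$ are the same for both, and any $x\notin X_{1}$ has $\dim T^{n}_{x}\le n-2$. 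I would then let $T'$ be the subtorus generated by $(T^{n}_{x})^{0}$ together with the diagonal circle $\{(t,\dots,t)\}$, so that $\dim T'\le (n-2)+1<n$. Writing any $g\in T^{n}_{x}$ as $g=\kappa h$ with $\kappa\in K$ and $h\in(T^{n}_{x})^{0}$ (possible because $(T^{n}_{x})^{0}$ already surjects onto the connected image of $T^{n}_{x}$ in $T^{n}/K$), and observing $\kappa,h\in T'$, shows $T^{n}_{x}\subset T'$ is contained in a proper subtorus, as required.

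With both hypotheses in place, Theorem~\ref{FPT} gives the exact sequence
\[
0\longrightarrow H^{*}_{T^{n}}(Q_{2n-2})\stackrel{i^{*}}{\longrightarrow} H^{*}_{T^{n}}(Q_{2n-2}^{T})\stackrel{\delta}{\longrightarrow} H^{*+1}_{T^{n}}\big(X_{1},Q_{2n-2}^{T}\big).
\]
Since the $2n$ fixed points are isolated, $H^{*}_{T^{n}}(Q_{2n-2}^{T})\cong\bigoplus_{v\in\cv_{2n-2}}H^{*}(BT^{n})$, which I read as functions $f:\cv_{2n-2}\to H^{*}(BT^{n})$; moreover $i^{*}$ is a ring homomorphism, so its image is a subring. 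By exactness this image equals $\ker\delta$, and it remains to compute this kernel.

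Finally I would analyze $X_{1}$ and $\delta$. The space $X_{1}$ is the union of the $T^{n}$-invariant $2$-spheres $S_{pq}$ indexed by the edges $pq\in\ce_{2n-2}$, meeting one another only at fixed points; since those intersection points lie in $Q_{2n-2}^{T}$, excision splits the relative group as $\bigoplus_{pq}H^{*}_{T^{n}}(S_{pq},\{p,q\})$. The long exact sequence of each pair $(S_{pq},\{p,q\})$ identifies the $pq$-summand with $H^{*}(BT^{n})/\langle\alpha(pq)\rangle$ (up to degree shift) via $(a,b)\mapsto[a-b]$, and under this identification $\delta$ sends $f$ to the class of $f(p)-f(q)$ on each edge, exactly as in the standard GKM computation. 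Hence $f\in\ker\delta$ if and only if $f(p)-f(q)\equiv 0\bmod\alpha(pq)$ for all $pq\in\ce_{2n-2}$, which is precisely the congruence relation defining $H^{*}(\mathcal{GQ}_{2n-2})$ in \eqref{congrel_v2}. Since these congruences are closed under multiplication, the image of $i^{*}$ is the subring $H^{*}(\mathcal{GQ}_{2n-2})$, yielding the desired ring isomorphism $H^{*}_{T^{n}}(Q_{2n-2})\cong H^{*}(\mathcal{GQ}_{2n-2})$.
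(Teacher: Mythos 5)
Your proposal is correct and follows essentially the same route as the paper: verify the two hypotheses of the Franz--Puppe theorem (freeness from $H^{\mathrm{odd}}=0$, and containment of each isotropy group of a point outside $X_{1}$ in a proper subtorus), then identify $\ker\delta$ with the edge congruence relations via the one-skeleton of invariant $2$-spheres. The only differences are presentational --- you build the proper subtorus from $(T^{n}_{x})^{0}$ and the diagonal circle where the paper exhibits $T_{x}\simeq T^{k}\times\mathbb{Z}_{2}$ inside an explicit $T^{k+1}$, and you spell out the excision/Mayer--Vietoris computation that the paper delegates to cited references.
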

\begin{proof}
Since $H^{odd}(Q_{2n-2})=0$ (see Section~\ref{sect:6}), it follows from the spectral sequence argument that $H^{*}_{T}(Q_{2n-2})$ is free over $H^{*}(BT)$.
Moreover, it is easy to check that an element $x\in Q_{2n-2}$ with its orbit $T(x)\simeq T^{n-k}$ ($k\ge n-2$) has the isotropy group $T_{x}\simeq T^{k}\times \mathbb{Z}_{2}$.
Therefore, there is the proper subtorus $K\subset T^{n}$ which is isomorphic to $T^{k+1}$ such that $T_{x}\subset K$.
For example, the point $x=[z_{1}:\ldots :z_{n}:0:\ldots:0]$ with $z_{i}\not=0$ for $i=1,\ldots, n$ has the isotropy subgroup $\Delta(\mathbb{Z}_{2})$, and $\Delta(\mathbb{Z}_{2})$ is contained in the proper subtorus $K=\{(t,\ldots, t)\ |\ t\in S^{1}\}\subset T^{n}$, where $K\simeq T^{1}$.
This shows that $(Q_{2n-2},T^{n})$ satisfies the assumptions of Theorem~\ref{FPT}.
Therefore, we have 
\begin{align*}
H^{*}_{T^n}(Q_{2n-2})\simeq \im i^{*}=\ker \delta.
\end{align*}
Because $\delta$ is the connecting homomorphism of the long exact sequence of $(X_{1},X^{T})$ for $X=Q_{2n-2}$, we have 
\begin{align*}
H^{*}_{T^n}(Q_{2n-2})\simeq \ker\delta= \im j^{*},
\end{align*}
where $j:X^{T}\to X_{1}$ is the inclusion from the fixed points $X^{T}$ to $X_{1}$ and 
$j^{*}:H_{T}^{*}(X_{1})\to H_{T}^{*}(X^{T})$ is its induced homomorphism.
Using the method with Mayer-Vietoris sequence demonstrated in e.g.~\cite{bfr09, f10} (also see the proof of \cite[Theorem 2.9]{dks22}), we have that 
\begin{align*}
\im j^{*}\simeq \{f:\mathcal{V}_{2n-2}\to H^{*}(BT)\ |\ f(p)-f(q)\equiv 0 \mod\alpha(pq),\  \text{for all $pq\in \mathcal{E}_{2n-2}$} \}.
\end{align*}  
This completes the proof.
\end{proof}

\subsection{The graph equivariant cohomology of $\mathcal{GQ}_{2n-2}$ and the induced homomorphism $\iota^{*}$}
\label{sect:7.3} 
By \eqref{congrel} and \eqref{congrel_v2}, the inclusion $\iota:\mathcal{GQ}_{2n-2}\hookrightarrow \mathcal{GQ}_{2n-1}$ induces the following homomorphism:
\begin{align*}
\iota^{*}:H^*(\mathcal{GQ}_{2n-1})\to H^*(\mathcal{GQ}_{2n-2})
\end{align*}
such that 
\begin{align*}
\iota^{*}(f):=f',
\end{align*}
where $f'(p):=f(p)$ for all $p\in \mathcal{V}_{2n-2}=\mathcal{V}_{2n-1}$.
In fact, since $f$ satisfies the congruence relations for every edge in $\mathcal{E}_{2n-1}$, 
its restriction $f'$ also satisfies the congruence relations for every edge in $\mathcal{E}_{2n-2}$; therefore, $f'\in H^{*}(\mathcal{GQ}_{2n-2})$.
We call $\iota^{*}$ an {\it induced homomorphism} from $\iota$.

Note that the comcepts described above are also defined for any GKM graph $\mathcal{G}$ and its GKM subgraph $\mathcal{G}'$.
That is, if there is an inclusion of a GKM subgraph $\iota:\mathcal{G}'\hookrightarrow \mathcal{G}$ then there is the induced homomorphism $\iota^{*}:H^{*}(\mathcal{G})\to H^{*}(\mathcal{G}')$.
We have the following lemma:
\begin{lema}
Let $\mathcal{G}$ be a GKM graph. 
Assume that there is a GKM subgraph $\mathcal{G}'$ of $\mathcal{G}$ such that the set of vertices of $\mathcal{G}'$ coincides with that of $\mathcal{G}$.
Then the induced homomorphism $\iota^{*}:H^{*}(\mathcal{G})\to H^{*}(\mathcal{G}')$ is injective. 
\end{lema}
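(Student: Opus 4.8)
The plan is to unwind the definition of graph equivariant cohomology and observe that, because $\mathcal{G}$ and $\mathcal{G}'$ share the same vertex set, the map $\iota^{*}$ is literally the inclusion of one subset of $\bigoplus_{v\in\mathcal{V}(\mathcal{G})}H^{*}(BT)$ into another. No substantive computation is required; the entire content of the statement is carried by the hypothesis on vertices.

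First I would recall, following \eqref{congrel}, that both $H^{*}(\mathcal{G})$ and $H^{*}(\mathcal{G}')$ are by definition subsets of the $H^{*}(BT)$-module $\bigoplus_{v\in\mathcal{V}(\mathcal{G})}H^{*}(BT)$ of all functions $f:\mathcal{V}(\mathcal{G})\to H^{*}(BT)$, each cut out by one congruence relation per edge. Since $\mathcal{G}'$ is a GKM subgraph of $\mathcal{G}$ with the same vertex set, its edge set satisfies $\mathcal{E}(\mathcal{G}')\subseteq\mathcal{E}(\mathcal{G})$, so the relations defining $H^{*}(\mathcal{G})$ form a superset of those defining $H^{*}(\mathcal{G}')$. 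Hence $H^{*}(\mathcal{G})\subseteq H^{*}(\mathcal{G}')$ as subsets of $\bigoplus_{v\in\mathcal{V}(\mathcal{G})}H^{*}(BT)$.

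Next I would invoke the construction of $\iota^{*}$ from Section~\ref{sect:7.3}: for $f\in H^{*}(\mathcal{G})$ the image $\iota^{*}(f)=f'$ is given by $f'(p)=f(p)$ for every $p\in\mathcal{V}(\mathcal{G})$. Because every vertex of $\mathcal{G}$ is also a vertex of $\mathcal{G}'$, the function $f'$ carries exactly the same data as $f$; that is, $\iota^{*}$ coincides with the inclusion of the previous step. Injectivity is then immediate: if $\iota^{*}(f)=0$ then $f(p)=f'(p)=0$ for all $p\in\mathcal{V}(\mathcal{G})$, so $f$ is the zero element of $H^{*}(\mathcal{G})$. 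There is no genuine obstacle; one only has to make explicit that the hypothesis $\mathcal{V}(\mathcal{G}')=\mathcal{V}(\mathcal{G})$ is precisely what prevents $\iota^{*}$ from discarding any of the information recording an element of $H^{*}(\mathcal{G})$, in contrast to the general case where a GKM subgraph with fewer vertices would restrict away from some fixed-point data.
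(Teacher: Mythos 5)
Your proof is correct and follows essentially the same route as the paper: both arguments reduce to the observation that $H^{*}(\mathcal{G})$ and $H^{*}(\mathcal{G}')$ sit inside $\bigoplus_{v\in\mathcal{V}}H^{*}(BT)$ and that $\iota^{*}$ is compatible with the (injective) restriction to vertices, the shared vertex set being the key hypothesis. Your version is marginally more explicit in identifying $\iota^{*}$ with an actual inclusion of subsets cut out by nested families of congruence relations, but the content is identical to the paper's proof.
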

\begin{proof}
Let $\mathcal{V}$ be the set of vertices of $\mathcal{G}$ and $\mathcal{G}'$.
By the definition of graph equivariant cohomology (see e.g. \cite{gz01}), 
the restriction to the vertices 
\begin{align*}
H^{*}(\mathcal{G})\stackrel{\iota^{*}}{\to} H^{*}(\mathcal{G}')\to \bigoplus_{v\in \mathcal{V}}H^{*}(BT)
\end{align*} 
is injective.
Therefore, the homomorphism $\iota^{*}$ is injective.
\end{proof}

Since $\mathcal{V}_{2n-2}=\mathcal{V}_{2n-1}$ for $\iota:\mathcal{GQ}_{2n-2}\hookrightarrow \mathcal{GQ}_{2n-1}$, we have the following corollary:
\begin{coro}
\label{injectivity_equiv_cohom}
The induced homomorphism $\iota^{*}:H^{*}(\mathcal{GQ}_{2n-1})\to H^{*}(\mathcal{GQ}_{2n-2})$ is injective. 
\end{coro}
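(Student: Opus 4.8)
The plan is to obtain this corollary immediately from the preceding Lemma, applied to the pair $\mathcal{G}=\mathcal{GQ}_{2n-1}$ and $\mathcal{G}'=\mathcal{GQ}_{2n-2}$. To invoke that Lemma I would only need to confirm its two hypotheses: that $\mathcal{GQ}_{2n-2}$ is a GKM subgraph of $\mathcal{GQ}_{2n-1}$, and that the two graphs share the same vertex set.

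The first hypothesis is already in hand. By the construction in Section~\ref{sect:7.1} (see also Remark~\ref{rem_submanifold_evem-dim-quadric}), the graph $\Gamma_{2n-2}$ is obtained from $\Gamma_{2n-1}$ by deleting precisely the edges $\{i\overline{i}\ |\ 1\le i\le n\}$ and keeping the restricted axial function $\alpha'=\alpha|_{\mathcal{E}_{2n-2}}$; this $(\Gamma_{2n-2},\alpha')$ is closed under the canonical connection $\nabla$, so it is a genuine GKM subgraph in the sense of Definition~\ref{gkmsubgraph}. The second hypothesis is immediate from the definition $\mathcal{V}_{2n-2}:=\mathcal{V}_{2n-1}$. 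With both hypotheses verified, the preceding Lemma yields the injectivity of $\iota^{*}$ at once.

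For completeness I would also record the direct reason, which is essentially the whole content of the statement. By \eqref{congrel} and \eqref{congrel_v2}, both $H^{*}(\mathcal{GQ}_{2n-1})$ and $H^{*}(\mathcal{GQ}_{2n-2})$ are realized as subrings of $\bigoplus_{v\in\mathcal{V}_{2n-1}}H^{*}(BT^{n})$, consisting of those vertex-functions that satisfy the respective congruence relations; passing from $\mathcal{E}_{2n-1}$ to $\mathcal{E}_{2n-2}$ only relaxes these relations and never alters the underlying values. Concretely, $\iota^{*}(f)=f'$ with $f'(p)=f(p)$ for every $p\in\mathcal{V}_{2n-1}$, so if $\iota^{*}(f)=0$ then $f(p)=0$ for all $p$, whence $f=0$. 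I do not expect any serious obstacle here: the single point that requires attention is confirming that the vertex set is genuinely preserved under $\iota$, since it is exactly this that guarantees the restriction to vertices loses no information and thereby forces $\iota^{*}$ to be injective. Were $\mathcal{GQ}_{2n-2}$ supported on a proper subset of the vertices, the argument would break down, so this is the hypothesis to guard carefully.
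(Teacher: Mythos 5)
Your proposal is correct and matches the paper's proof: the corollary is deduced exactly as you describe, by applying the preceding lemma to $\mathcal{G}=\mathcal{GQ}_{2n-1}$ and $\mathcal{G}'=\mathcal{GQ}_{2n-2}$, using that $\mathcal{V}_{2n-2}=\mathcal{V}_{2n-1}$. Your supplementary ``direct reason'' is precisely the content of that lemma's proof (injectivity of the restriction to vertices), so nothing is missing.
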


\subsection{The ring structure of $H^{*}(\mathcal{GQ}_{2n-2})$ and the image ${\rm Im}~\iota^{*}$}
\label{sect:7.4} 

In this subsection, we compute $H^{*}(\mathcal{GQ}_{2n-2})$, and then determine the image of $\iota^{*}:H^{*}(\mathcal{GQ}_{2n-1})\to H^{*}(\mathcal{GQ}_{2n-2})$.

For $H^{*}(\mathcal{GQ}_{2n-2})$, 
the following elements will serve as generators. 
Some of them can be defined by using elements in $H^{*}(\mathcal{GQ}_{2n-1})$ (i.e., the functions $\mathcal{V}_{2n-1}(=\mathcal{V}_{2n-2})\to H^{*}(BT^{n})$ that satisfies \eqref{congrel}).

\begin{description}
\item[Generator 1] $\mathcal{M}:=\{M'_{v}:\mathcal{V}_{2n-2}\to H^{2}(BT^{n})\ |\ v\in \mathcal{V}_{2n-2}\}$ where $M_v':=\iota^{*}(M_v)$. More precisely, by  \eqref{mapmv}, 
\[M'_v(j)=\begin{cases}
	0 &\text{if $j=v$}\\
	\alpha(jv) =-\alpha_j+\alpha_v&\text{if $j\not=v, \overline{v}$}\\
	2\alpha_v&\text{if $j= \overline{v}$}
\end{cases}
\]
\item[Generator 2] $\mathcal{D}:=\{\Delta'_{K}:\mathcal{V}_{2n-2}\to H^{4n-2|K|-2}(BT^{n})\ |\ K\subset \cv_{2n-2},\ \text{ $\{i,\overline{i}\}\not\subset K$ $\forall i\in \cv_{2n-2}$}\}$  where \begin{equation}\label{deltaleven}\Delta'_{K}(j)=\begin{cases}
	\displaystyle\prod_{k\notin K\cup\{\overline{j}\}}\alpha(jk) & \text{if $j\in K$},\\
	~~~~~~~~~~~~~0& \text{if $j\notin K$}.
\end{cases}\end{equation}
\item[Generator 3] $X:=\iota^{*}(Q):\mathcal{V}_{2n-2}\to H^{2}(BT^{n})$ such that $X(i):=Q(i)=-\alpha_{i}$ for $1\le i\le 2n$.
\end{description}

\begin{rema}
\label{remark-effective-noneffective}
The element $X$ also appears in the graph equivariant cohomology of the GKM graph in \cite[Section 4]{ku23}, which is induced from the effective $T^{n}$-action on $Q_{2n-2}$.
In \cite{ku23}, $X$ is not needed as a generator because $M_{v}+M_{\overline{v}}=X$ holds (see Relation 2 of \cite{ku23}).
However, in the GKM graph induced from the non-effective action, 
for any $v\in\cv_{2n-2}$, the following equality holds:
\begin{equation}\label{2X-relation}
		M'_v+M'_{\overline{v}}=2X.
\end{equation}
Therefore, $X$ is needed as a generator in the non-effective case because of \eqref{2X-relation}. 
\end{rema}

\begin{rema} 
Let $K\subset\cv_{2n-2}$ be a subset such that  $\{i,\overline{i}\}\not\subset K$ for any $i\in \cv_{2n-2}$. 
We note that $\Delta_{K}'\in H^{4n-2|K|-2}(\mathcal{GQ}_{2n-2})$ is not induced from $\Delta_{K}\in H^{4n-2|K|}(\mathcal{GQ}_{2n-1})$ (see Definition~\ref{Delta_k}), i.e., $\iota^{*}(\Delta_{K})\not=\Delta'_{K}$, because their degrees are different (see Figure~\ref{comparison_of_two_elements}).
\end{rema}

Let $\mathbb{Z}[\mathcal{M}, \mathcal{D}, X]$ be the polynomial ring generated by all elements in $\mathcal{M}$, $\mathcal{D}$ and $X$. 
Let $\mathcal{I}$ be the ideal in $\mathbb{Z}[\mathcal{M}, \mathcal{D}, X]$ generated by the following four types of relations:
\begin{description}
\item[Relation 1] $\prod_{\cap J=\emptyset}G_{J}$ for $G_{J}\in \mathcal{M}\sqcup \mathcal{D}$, where $G_{J}$ is defined similarly to \eqref{G_J};
\item[Relation 2] $M'_v+M'_{\overline{v}}-2X$  for any $v\in\cv_{2n-1}$; 
\item[Relation 3] $\prod_{i\in I}M'_{i}-(\Delta'_{(I\cup \{a\})^{c}}+\Delta'_{(I\cup \{\overline{a}\})^{c}})$ for every subset $I\subset \mathcal{V}_{2n-2}$ such that $|I|=n-1$ and there exists a unique pair $\{a,\overline{a}\}\subset I^c$;
\item[Relation 4] $\Delta'_{K}\cdot M'_{i}-\Delta'_{K\setminus \{i\}}$ for $\{i\}\subsetneq K$ and  $\{j,\overline{j}\}\not\subset K$ for all $j\in\cv_{2n-2}$.  
\end{description}

\begin{rema}
Relations 1, 3 and 4 as descrived above coincide with those appearing in the graph equivariant cohomology of the GKM graph induced from the effective torus action in \cite{ku23}. 
The only difference is Relation 2.
\end{rema}

Using a similar proof to that of Theorem~\ref{mainth} (or the main theorem of  \cite{ku23}), we have the following theorem:
\begin{guess}
\label{thm-non-effective-action}
For the GKM graph $\mathcal{GQ}_{2n-2}$, the following isomorphism holds:
\begin{align*}
H^{*}(\mathcal{GQ}_{2n-2})\simeq \mathbb{Z}[\mathcal{M}, \mathcal{D}, X]/\mathcal{I}.
\end{align*}
\end{guess}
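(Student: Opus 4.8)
The plan is to mirror the proof of Theorem~\ref{mainth} step by step, treating $\mathcal{M}$, $\mathcal{D}$, and $X$ as the even-dimensional analogues of $\mathbf{M}$, $\mathbf{D}$, and $Q$. First I would define a ring homomorphism $\widetilde{\varphi}':\mathbb{Z}[\mathcal{M},\mathcal{D},X]\to H^{*}(\mathcal{GQ}_{2n-2})$ sending each symbol to the corresponding function on $\mathcal{V}_{2n-2}$, and check that it descends to $\varphi':\mathbb{Z}[\mathcal{M},\mathcal{D},X]/\mathcal{I}\to H^{*}(\mathcal{GQ}_{2n-2})$. Well-definedness amounts to verifying that Relations~1--4 hold identically in $H^{*}(\mathcal{GQ}_{2n-2})$: Relations~1, 3, and~4 are proved by the same case-by-case evaluations as in \cite{ku23} (using \eqref{deltaleven}), while Relation~2 is exactly the identity \eqref{2X-relation} recorded in Remark~\ref{remark-effective-noneffective}. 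The geometric isomorphism $H^{*}_{T^{n}}(Q_{2n-2})\simeq H^{*}(\mathcal{GQ}_{2n-2})$ is already supplied by Lemma~\ref{key-lemma}, so it remains only to show that $\varphi'$ is an isomorphism.

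For surjectivity I would adapt the vertex-by-vertex peeling argument of Lemma~\ref{surjmainth}. Given $f\in H^{*}(\mathcal{GQ}_{2n-2})$, the congruence relations along the edges among $\{1,\dots,n\}$ are identical to the odd-dimensional case, so one subtracts off multiples of $1,\ M'_{1},\ M'_{1}M'_{2},\dots$ to arrange $f(i)=0$ for $1\le i\le n$. The one genuine difference occurs at the vertices $n+1,\dots,2n$: since the edges $i\overline{i}$ have been deleted, each vertex $n+j$ is now constrained by one fewer congruence relation (the missing one being the edge to $\overline{n+j}=n+1-j$), so the residual value at $n+j$ is divisible by $\Delta'_{\{n+j,\dots,2n\}}$ rather than by $\Delta_{\{n+j,\dots,2n\}}$. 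This is precisely why the definition \eqref{deltaleven} of $\Delta'_{K}$ omits the factor $\alpha(j\overline{j})$, and it shows that every class is a combination of products $M'_{1}\cdots M'_{k-1}$ and of the $\Delta'_{K}$, hence that $\widetilde{\varphi}'$ is surjective.

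For injectivity I would first establish the even-dimensional analogue of Lemma~\ref{lemma5.4}, namely that localizing $\mathbb{Z}[\mathcal{M},\mathcal{D},X]/\mathcal{I}$ at a vertex $v$ by killing the ideal $\langle G_{J}\mid v\notin J\rangle$ yields $\mathbb{Z}[M'_{i},X\mid i\in I_{v}]\simeq H^{*}(BT^{n})$, and then run the same integral-domain vanishing argument as in Lemma~\ref{injectivitymainth} on the resulting embedding $H^{*}(\mathcal{GQ}_{2n-2})\hookrightarrow\bigoplus_{v}H^{*}(BT^{n})$. The support properties $\Delta'_{K}(j)=0$ for $j\notin K$ and the non-vanishing of $M'_{1}\cdots M'_{k-1}$ at the vertex $k$ are unchanged, so this final step transfers essentially verbatim once the localization lemma is in hand.

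The main obstacle will be that localization lemma, and specifically the reduction of the generators $\Delta'_{K}$ with $|K|=n$ modulo $\langle G_{J}\mid v\notin J\rangle$. In the odd case this was immediate from $2\Delta_{K}=\prod_{i\in K^{c}}M_{i}$ (Relation~3), but the even-dimensional Relation~3 only expresses a product of $M'_{i}$'s as a \emph{sum of two} terms $\Delta'_{(I\cup\{a\})^{c}}+\Delta'_{(I\cup\{\overline{a}\})^{c}}$. The trick that resolves this is to apply Relation~3 with the free antipodal element chosen to be $a=\overline{v}$: writing $K=(I\cup\{\overline{v}\})^{c}$ with $I=K^{c}\setminus\{\overline{v}\}$, the companion term is $\Delta'_{K'}$ with $K'=(K\setminus\{v\})\cup\{\overline{v}\}$, so that $v\notin K'$ and hence $\overline{\Delta'_{K'}}=0$ in the quotient. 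This leaves $\overline{\Delta'_{K}}=\prod_{i\in I}\overline{M'_{i}}$, exactly as needed; combined with Relation~4 (which expresses any $\Delta'_{L}$ with $|L|<n$ through size-$n$ ones times $M'$'s) and Relation~2 (to trade $M'_{\overline{i}}$ for $2X-M'_{i}$), this reduces every $\Delta'_{P}$ to the subring generated by $\overline{X}$ and the $\overline{M'_{i}}$, completing the localization and hence the whole proof.
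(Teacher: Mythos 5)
Your proposal is correct and follows exactly the route the paper itself indicates, since the paper's proof of Theorem~\ref{thm-non-effective-action} consists only of the remark that one argues ``similarly to Theorem~\ref{mainth}'': you carry out that analogy in detail, and your resolution of the one genuinely non-verbatim step --- applying the even-dimensional Relation~3 with $I=K^{c}\setminus\{\overline{v}\}$ so that the companion Thom class $\Delta'_{(K\setminus\{v\})\cup\{\overline{v}\}}$ dies in the quotient by $\langle G_{J}\mid v\notin J\rangle$ --- is exactly what is needed to make the localization lemma (the analogue of Lemma~\ref{lemma5.4}) go through.
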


Moreover, the following proposition holds:
\begin{propo}
\label{remark_the_correspondence}
The injective homomorphism $\iota^{*}:H^{*}(\mathcal{GQ}_{2n-1})\to H^{*}(\mathcal{GQ}_{2n-2})$ is induced by the following correspondence of generators: 
\begin{align*}
& \iota^{*}:M_{v}\mapsto M'_{v}; \\
& \iota^{*}:Q\mapsto X; \\
& \iota^{*}:\Delta_{J}\mapsto X\cdot\Delta'_J.
\end{align*}
\end{propo}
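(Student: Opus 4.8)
The plan is to check the three assignments directly and then use that $\iota^{*}$ is a ring homomorphism whose source is generated by the listed classes. Two of the three assignments are immediate from the definitions in Section~\ref{sect:7.4}: by \textbf{Generator 1} the class $M'_v$ is defined as $\iota^{*}(M_v)$, and by \textbf{Generator 3} the class $X$ is defined as $\iota^{*}(Q)$. Thus $\iota^{*}(M_v)=M'_v$ and $\iota^{*}(Q)=X$ hold tautologically, and the whole content of the statement is concentrated in the identity $\iota^{*}(\Delta_J)=X\cdot \Delta'_J$.

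To establish this, I would evaluate both sides at an arbitrary vertex $j\in\mathcal{V}_{2n-2}=\mathcal{V}_{2n-1}$, recalling that $\iota^{*}(\Delta_J)(j)=\Delta_J(j)$ since $\iota^{*}$ is restriction of functions. The guiding observation is that $X(j)=Q(j)=-\alpha_j=\alpha(j\overline{j})$: the generator $X$ records on each vertex exactly the axial value of the edge $j\overline{j}$, which is the unique edge incident to $j$ that lies in $\mathcal{E}_{2n-1}$ but is deleted in passing to $\mathcal{E}_{2n-2}$. Concretely, if $j\notin J$ then $\Delta_J(j)=0=\Delta'_J(j)$ and both sides vanish; if $j\in J$ then the hypothesis $\{i,\overline{i}\}\not\subset J$ forces $\overline{j}\notin J$, so $\overline{j}$ occurs among the indices $k\notin J$. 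Splitting off this single factor gives
\[
\Delta_J(j)=\prod_{k\notin J}\alpha(jk)=\alpha(j\overline{j})\prod_{k\notin J\cup\{\overline{j}\}}\alpha(jk)=X(j)\,\Delta'_J(j),
\]
which is precisely $(X\cdot\Delta'_J)(j)$. This confirms $\iota^{*}(\Delta_J)=X\cdot\Delta'_J$; as a consistency check, the degrees match, since $\deg X=2$ and $\deg\Delta'_J=\deg\Delta_J-2$.

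Finally, since $\iota^{*}$ is a ring homomorphism and, by Theorem~\ref{mainth}, the classes $\{M_v\}$, $Q$ and $\{\Delta_J\}$ generate $H^{*}(\mathcal{GQ}_{2n-1})$, the three computed images determine $\iota^{*}$ completely; injectivity is already guaranteed by Corollary~\ref{injectivity_equiv_cohom}. The only delicate point is the bookkeeping in the displayed computation --- verifying that the omitted index is exactly $\overline{j}$ and that the discarded factor equals $X(j)$ --- but this is a direct consequence of the axial-function formula $\alpha(i\overline{i})=-\alpha_i$ together with the defining constraint on $J$.
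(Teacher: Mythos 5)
Your proof is correct and follows essentially the same route as the paper: the paper disposes of the first two assignments as definitional and says the third is "established by comparing the generators," which is exactly the pointwise computation you carry out. Your explicit factorization $\Delta_J(j)=\alpha(j\overline{j})\prod_{k\notin J\cup\{\overline{j}\}}\alpha(jk)=X(j)\,\Delta'_J(j)$ (using that $j\in J$ forces $\overline{j}\notin J$) simply fills in the detail the paper leaves implicit.
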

\begin{proof}
The induced homomorphism $\iota^{*}$ is injective by Corollary~\ref{injectivity_equiv_cohom}.

The first and second correspondences in the statement follow directly from the definitions of generators and $\iota^{*}$.
The third correspondence is established by comparing the generators from   Theorem~\ref{mainth} and Theorem~\ref{thm-non-effective-action}.
\end{proof}

\begin{exam}
In Figure~\ref{comparison_of_two_elements}, we compare two elements: $\Delta'_{1,2,3}\in H^{4}(\mathcal{GQ}_{4})$ and $\Delta_{1,2,3}\in H^{6}(\mathcal{GQ}_{5})$.
More precisely: 
The left element $\Delta'_{1,2,3}\in H^{4}(\mathcal{GQ}_{4})$ is defined by: 
\begin{align*}
& \Delta'_{1,2,3}(1)=\alpha(14)\alpha(15), \Delta'_{1,2,3}(2)=\alpha(24)\alpha(26),  \Delta'_{1,2,3}(3)=\alpha(35)\alpha(36), \\
& \Delta'_{1,2,3}(4)=\Delta'_{1,2,3}(5)=\Delta'_{1,2,3}(6)=0.
\end{align*}
The right element $\Delta_{1,2,3}\in H^{6}(\mathcal{GQ}_{5})$ is defined by:
\begin{align*}
& \Delta_{1,2,3}(1)=\alpha(14)\alpha(15)\alpha(16), \Delta_{1,2,3}(2)=\alpha(24)\alpha(25)\alpha(26), \Delta_{1,2,3}(3)=\alpha(34)\alpha(35)\alpha(36), \\
& \Delta_{1,2,3}(4)=\Delta_{1,2,3}(5)=\Delta_{1,2,3}(6)=0.
\end{align*} 
It is straightforward to verify that $\iota^{*}(\Delta_{1,2,3})=X\Delta'_{1,2,3}$ (see the third correspondence in Proposition~\ref{remark_the_correspondence}).

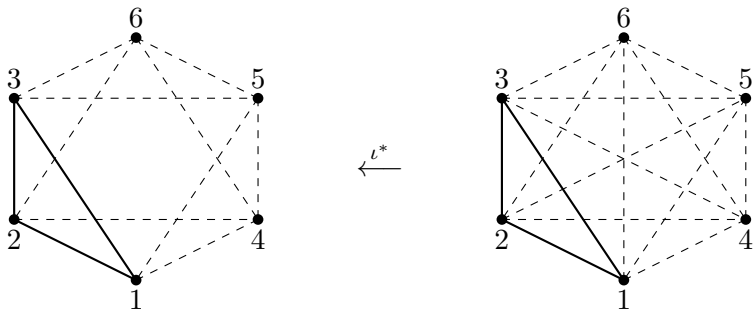
\begin{figure}[h]
\begin{tikzpicture}
\begin{scope}[xscale=0.4, yscale=0.4]

\fill(-2,4) circle (5pt);
\node[above] at (-2,4) {$6$};
\fill(-6,2) circle (5pt);
\node[above] at (-6,2) {$3$};
\fill(-6,-2) circle (5pt);
\node[below] at (-6,-2) {$2$};
\fill(-2,-4) circle (5pt);
\node[below] at (-2,-4) {$1$};
\fill(2,-2) circle (5pt);
\node[below] at (2,-2) {$4$};
\fill(2,2) circle (5pt);
\node[above] at (2,2) {$5$};

\draw[dashed] (-2,4)--(-6,2);
\draw[dashed] (-2,4)--(-6,-2);
\draw[dashed] (-2,4)--(2,-2);
\draw[dashed] (-2,4)--(2,2);
\draw[thick] (-6,2)--(-6,-2);
\draw[thick] (-6,2)--(-2,-4);
\draw[dashed] (-6,2)--(2,2);
\draw[dashed] (-6,-2)--(2,-2);
\draw[thick] (-6,-2)--(-2,-4);
\draw[dashed] (2,2)--(2,-2);
\draw[dashed] (2,2)--(-2,-4);
\draw[dashed] (2,-2)--(-2,-4);

\node at (6,0) {$\stackrel{\iota^{*}}{\longleftarrow}$};

\fill(14,4) circle (5pt);
\node[above] at (14,4) {$6$};
\fill(10,2) circle (5pt);
\node[above] at (10,2) {$3$};
\fill(10,-2) circle (5pt);
\node[below] at (10,-2) {$2$};
\fill(14,-4) circle (5pt);
\node[below] at (14,-4) {$1$};
\fill(18,-2) circle (5pt);
\node[below] at (18,-2) {$4$};
\fill(18,2) circle (5pt);
\node[above] at (18,2) {$5$};

\draw[dashed] (14,4)--(10,2);
\draw[dashed] (14,4)--(10,-2);
\draw[dashed] (14,4)--(18,-2);
\draw[dashed] (14,4)--(18,2);
\draw[thick] (10,2)--(10,-2);
\draw[thick] (10,2)--(14,-4);
\draw[dashed] (10,2)--(18,2);
\draw[dashed] (10,-2)--(18,-2);
\draw[thick] (10,-2)--(14,-4);
\draw[dashed] (18,2)--(18,-2);
\draw[dashed] (18,2)--(14,-4);
\draw[dashed] (18,-2)--(14,-4);
\draw[dashed] (14,-4)--(14,4);
\draw[dashed] (10,-2)--(18,2);
\draw[dashed] (10,2)--(18,-2);
\end{scope}
\end{tikzpicture}
\caption{$\Delta'_{1,2,3}\in H^{4}(\mathcal{GQ}_{4})$ (left), and $\Delta_{1,2,3}\in H^{6}(\mathcal{GQ}_{5})$ (right).}
\label{comparison_of_two_elements}
\end{figure}
\end{exam}

\begin{rema}
Given the embedding $\iota:Q_{2n-2}\hookrightarrow Q_{2n-1}$, we obtain the induced homomorphism on ordinary cohomology $\iota^{*}:H^{*}(Q_{2n-1})\to H^{*}(Q_{2n-2})$.
Note that $\dim Q_{2n-1}=4n-2>4n-4=\dim Q_{2n-2}$; therefore, $H^{4n-2}(Q_{2n-1})\simeq \mathbb{Z}$ and $H^{4n-2}(Q_{2n-2})=0$.
Thus, $\iota^{*}:H^{*}(Q_{2n-1})\to H^{*}(Q_{2n-2})$ is not injective. 
However, by Lemma~\ref{key-lemma} and Proposition~\ref{remark_the_correspondence}, the induced homomorphism on equivariant cohomology $\iota^{*}_{T}:H^{*}_{T}(Q_{2n-1})\to H^{*}_{T}(Q_{2n-2})$ is injective.
\end{rema}

\section{GKM description for non-effective $T^{1}$-actions on $\mathbb{CP}^{1}$}
\label{sect:8}

In \cite{ku23}, we compute the effective $T^{n}$-ation on $Q_{2n-2}$. 
According to the main theorem of \cite{ku23}, the generator $X$ that appears in $H^{*}(\mathcal{GQ}_{2n-2})$ in Theorem~\ref{thm-non-effective-action} is not needed (see also Remark~\ref{remark-effective-noneffective}).

In this final section, we observe a similar phenomenon in the equivariant cohomology of non-effective torus actions by comparing the equivariant cohomology of $T^{1}$-actions on $Q_{1}\simeq \mathbb{CP}^{1}\simeq S^{2}$, i.e., we give details of the research announcement \cite[Appendix A]{ku22}.
For convenience, we denote $T^{1}$ as $T$.

Since the Euler number satisfies $\chi(M)=\chi(M^{T})$ (see \cite[Theorem 41.1]{kawa}), we have $\chi((\mathbb{CP}^{1})^{T})=\chi(S^{2})=2$.
Therefore, for every non-trivial $T$-action on $\mathbb{CP}^{1}$, there are exactly two fixed points, denote by $(\mathbb{CP}^{1})^{T}=\{p,q\}$.

Using the differentiable slice theorem, the $T$-representations around $T_{p}\mathbb{CP}^{1}$ and $T_{q}\mathbb{CP}^{1}$ are rotations of order $n$, up to sign, for $n\in \mathbb{Z}$.
More precisely, 
for every $T$-action on $\mathbb{CP}^{1}$, there exists a non-negative integer $n$ such that the action is weak equivariantly diffeomorphic (i.e., equivariantly diffeomorphic up to an automorphism on $T$) to the following action:
\begin{align*}
t\cdot [z_{0}:z_{1}]=[z_{0}:t^{n}z_{1}],
\end{align*}
where $t\in T$ and $[z_{0}:z_{1}]\in \mathbb{CP}^1$.
We denote this action by $\varphi_{n}$ and the equivariant cohomology $H_{T}^{*}(\mathbb{CP}^{1})$ with respect to this action by $H_{\varphi_{n}}^{*}(\mathbb{CP}^{1})$.

\begin{rema}
Note that the complex quadric $Q_{1}\simeq SO(3)/SO(2)$, with the maximal torus $T~(\subset SO(3))$-action, is equivariantly diffeomorphic to $\mathbb{CP}^{1}$ with the standard effective $T$-action (i.e., $\varphi_{1}$) because there is no non-trivial center in $SO(3)$.
On the other hand, $\mathbb{CP}^{1}(\simeq Q_{1})$ is also diffeomorphic to $SU(2)/S(U(1)\times U(1))$, but the maximal torus $T ~(\subset SU(2))$-action (i.e., $\varphi_{2}$) has a non-trivial center $\mathbb{Z}_{2}$.
\end{rema}

For $n=0$, $\varphi_{0}$ represents the trivial $T$-action. 
Therefore, we have: 
\begin{align*}
H_{\varphi_{0}}^{*}(\mathbb{CP}^{1})\simeq H^{*}(\mathbb{CP}^{1})\otimes H^{*}(BT)\simeq \mathbb{Z}[x,\alpha]/\langle x^{2}\rangle,
\end{align*}
where $x$ is a generator of $H^{*}(\mathbb{CP}^{1})$ and $\alpha$ is a generator of $H^{*}(BT)$.
In \cite[Remark 4.5]{kkls}, we also show that: 
\begin{align*}
H_{\varphi_{1}}^{*}(\mathbb{CP}^{1})\simeq \mathbb{Z}[\tau_{1},\tau_{2}]/\langle \tau_{1}\tau_{2}\rangle \not\simeq H_{\varphi_{2}}^{*}(\mathbb{CP}^{1})\simeq \mathbb{Z}[u,v]/\langle u^{2}-v^{2} \rangle,
\end{align*}
where $\tau_{1}, \tau_{2}, u, v$ are elements of degree two. 
We will generalize this to arbitrary $\varphi_{n}$ for all $n\ge 0$.

The Mayer-Vietoris exact sequence of the equivariant cohomology is given by: 
\begin{align*}
\cdots \longrightarrow H_{\varphi_{n}}^{j}(\mathbb{CP}^{1})\longrightarrow H_{\varphi_{n}}^{j}(U_{0})\oplus H_{\varphi_{n}}^{j}(U_{1})\longrightarrow 
H_{\varphi_{n}}^{j}(U_{0}\cap U_{1}) \longrightarrow H_{\varphi_{n}}^{j+1}(\mathbb{CP}^{1})\longrightarrow \cdots 
\end{align*}
where $U_{0}\simeq \{[z_{0}: 1]\ |\ z_{0}\in \mathbb{C}\}$ is an invariant open neighborhood of the fixed point $[0:1]$, 
$U_{1}\simeq \{[1: z_{1}]\ |\ z_{1}\in \mathbb{C}\}$ is an invariant open neighborhood of the fixed point $[1:0]$, and 
$U_{0}\cap U_{1}\simeq \{[z_{0}: z_{1}]\ |\ z_{0}z_{1}\not=0\}\simeq \mathbb{C}^{*}$.
Here, $U_{i}$ is equivariantly contractible to a point and $U_{0}\cap U_{1}$ is equivariant deformation retractable to the great circle $S^{1}$. 
Moreover, since $H^{*}(BT)\simeq \mathbb{Z}[\alpha]$ for $\deg \alpha=2$ and $H^{odd}(\mathbb{CP}^{1})=0$, 
it is well-known that $H^{*}_{\varphi_{n}}(\mathbb{CP}^{1})\simeq H^{*}(BT)\otimes H^{*}(\mathbb{CP}^{1})$ as a module. 
Hence, 
this sequence is isomorphic to:
\begin{align}
\label{MVexseq}
0\to 
H_{T}^{2j-1}(S^{1}) \to H_{T}^{2j}(\mathbb{CP}^{1})\to H^{2j}(BT)\oplus H^{2j}(BT)\to
H_{T}^{2j}(S^{1}) \to 0.
\end{align}
Note that $H_{T}^{*}(S^{1})$ is the equivariant cohomology of the $n$-times rotated action of $T^{1}$ on $S^{1}$.
Furthermore, the restricted $T^{1}$-action from $\varphi_{n}$ on $S^{1}$ has the kernel $\mathbb{Z}_{n}$ for $n\ge 2$, $\{e\}$ for $n=1$, and $T^{1}$ for $n=0$.

First, consider for $n\ge 1$.
By \cite[Example 3.41]{hat}, the ring structure of the infinite lens space $B\mathbb{Z}_{n}$ is given by:
\begin{align*}
H^{*}(B\mathbb{Z}_{n};\mathbb{Z})\simeq \mathbb{Z}[\alpha]/\langle n\alpha \rangle
\end{align*} 
for $\deg \alpha=2$.
Thus, we have:  
\begin{align*}
H_{T}^{  {k}}(S^{1})=H^{  {k}}(ET\times_{T}S^{1})\simeq H^{  {k}}(ET/\mathbb{Z}_{n}) 
\simeq H^{  {k}}(B\mathbb{Z}_{n})
\simeq \left\{
\begin{array}{ll}
\mathbb{Z} &   {k}=0 \\
\mathbb{Z}_{n} &   {k}=2j, j>0 \\
0 &   {k}=2j-1
\end{array}
\right.
\end{align*}
(Note: $\mathbb{Z}_{1}$ is interpreted as $0$).
Therefore, by the Mayer-Vietoris sequence \eqref{MVexseq}, we get the following short exact sequence for $j>0$ and $n\ge 1$:
\begin{align*}
0\longrightarrow H_{\varphi_{n}}^{2j}(\mathbb{CP}^{1})\longrightarrow \mathbb{Z}\alpha^{j}\oplus \mathbb{Z}\alpha^{j}\longrightarrow \mathbb{Z}_{n}\longrightarrow 0.
\end{align*}
On the other hand, for $n=0$, we have: 
\begin{align*}
H_{T}^{  {k}}(S^{1})=H^{  {k}}(BT\times S^{1})=H^{  {k}}(BT)\otimes H^{  {k}}(S^{1})
\simeq \mathbb{Z}\quad \text{for all $  {k}\ge 0$}, 
\end{align*}
and therefore, by the Mayer-Vietoris sequence \eqref{MVexseq}, we obtain:
\begin{align*}
0\longrightarrow \mathbb{Z}\alpha^{j-1}y\longrightarrow H_{\varphi_{0}}^{2j}(\mathbb{CP}^{1})\longrightarrow \mathbb{Z}\alpha^{j}\oplus \mathbb{Z}\alpha^{j}\longrightarrow \mathbb{Z}\alpha^{j}\longrightarrow 0,
\end{align*}
where $y$ is the generator of $H^{  {k}}(S^{1})$.
Hence, by the definition of the Mayer-Vietoris exact sequence, for all $n\ge 1$, we have: 
\begin{align*}
H_{\varphi_{n}}^{*}(\mathbb{CP}^{1})&\simeq \{f\oplus g\in \mathbb{Z}[\alpha]\oplus \mathbb{Z}[\alpha]\ |\ f_{0}=g_{0}, f_{j}-g_{j}\equiv 0\mod n\} \\
&\simeq \{f\oplus g\in \mathbb{Z}[\alpha]\oplus \mathbb{Z}[\alpha]\ |\ f-g\equiv 0\mod n\alpha\},
\end{align*}
where $f=\sum_{i=1}^{k_1}f_i \alpha^i$ and $g=\sum_{i=1}^{k_2}g_i \alpha^i$. 
For $n=0$, we have: 
\begin{align}
H_{\varphi_{0}}^{*}(\mathbb{CP}^{1})&\simeq \{h\oplus f\oplus g\in \mathbb{Z}[\alpha]y\oplus \mathbb{Z}[\alpha]\oplus \mathbb{Z}[\alpha]\ |\ f=g \} \label{trivial-case}
 \\
&\simeq \mathbb{Z}[x,\alpha]/\langle x^{2} \rangle, \label{trivial-case2}
\end{align}
where $x\in H^{2}_{T}(\mathbb{CP}^{1})$ is the image of the generator $y\in H^{1}(S^{1})$ under the connecting homomorpshim $H^{1}(S^{1})\simeq H^{1}_{T}(S^{1})\to H^{2}_{T}(\mathbb{CP}^{1})$. 
\begin{rema}
Note that for $n=1$, this description corresponds to the GKM description in the usual sense. Specifically: 
\begin{align*}
H_{\varphi_{1}}^{*}(\mathbb{CP}^{1})\simeq \{f\oplus g\in \mathbb{Z}[\alpha]\oplus \mathbb{Z}[\alpha]\ |\ f-g\equiv 0\mod \alpha\}.
\end{align*}
\end{rema}

Figure~\ref{GKM_graph_non_effective_2_sphere} illustrates the GKM graph which corresponds to $\varphi_{n}$ for $n\ge 1$.
Note that the trivial action, $\varphi_{0}$, is not a GKM manifold by definition.
\begin{figure}[H]
\begin{tikzpicture}
\begin{scope}[xshift=150, xscale=1, yscale=1]
\fill (0,1) circle (1pt); 
\fill (0,-1) circle (1pt); 

\draw (0,1)--(0,-1);
\draw[->] (0,1)--(0,0.5);
\node[right] at (0,0.5) {$n \alpha$};
\draw[->] (0,-1)--(0,-0.5);
\node[left] at (0,-0.5) {$-n \alpha$};

\node[right] at (0,1) {$p$};
\node[left] at (0,-1) {$q$};

\end{scope}
\end{tikzpicture}
\caption{The GKM graph of $\varphi_{n}$ for $n\ge 1$ is depicted, with the fixed points $p=[1:0]$ and $q=[0:1]$.
Note that the element $\alpha\in \mathfrak{t}^{*}\simeq \mathbb{R}$ can be regarded as the generator of the character lattice $\mathfrak{t}^{*}_{\mathbb{Z}}\simeq \mathbb{Z}\simeq H^{2}(BT^{1})$.}
\label{GKM_graph_non_effective_2_sphere}
\end{figure}
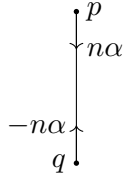

In summary, we have the following GKM description for $\varphi_{n}$.
\begin{guess}[GKM description for non-effective torus action on $\mathbb{CP}^{1}$]
\label{GKM description}
For every non-trivial $T^{1}$-action on $\mathbb{CP}^{1}$, 
there is the following ring isomorphism:
\begin{align*}
H_{\varphi_{n}}^{*}(\mathbb{CP}^{1})\simeq \{f:\{p,q\}\to \mathbb{Z}[\alpha]\ |\ f(p)-f(q)\equiv 0 \mod n\alpha \},
\end{align*}
where $\{p,q\}$ denotes the set of fixed points in $\mathbb{CP}^{1}$ and $n\ge 1$.
\end{guess}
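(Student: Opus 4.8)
The plan is to realize the congruence ring as the image of the fixed-point restriction map and to check that this map is an injective ring homomorphism; the additive content is already supplied by the Mayer--Vietoris computation preceding the statement, so the essential new point is compatibility with products.

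First I would record that the inclusions $i_{0}:U_{0}\hookrightarrow \mathbb{CP}^{1}$ and $i_{1}:U_{1}\hookrightarrow \mathbb{CP}^{1}$ assemble into a ring homomorphism
\[
(i_{0}^{*},i_{1}^{*}):H_{\varphi_{n}}^{*}(\mathbb{CP}^{1})\longrightarrow H_{\varphi_{n}}^{*}(U_{0})\oplus H_{\varphi_{n}}^{*}(U_{1}),
\]
the target being equipped with the component-wise product. Since each $U_{j}$ is equivariantly contractible to its fixed point, the retraction induces a ring isomorphism $H_{\varphi_{n}}^{*}(U_{j})\cong H^{*}(BT)\cong \mathbb{Z}[\alpha]$, and under these identifications $(i_{0}^{*},i_{1}^{*})$ is exactly the restriction $f\mapsto (f(p),f(q))$ to the fixed point set $\{p,q\}$. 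Being built from pullbacks, it is a ring homomorphism into $\mathbb{Z}[\alpha]\oplus \mathbb{Z}[\alpha]$.

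Next I would read off injectivity and the image from the sequence \eqref{MVexseq}. For $n\ge 1$ one has $H_{T}^{odd}(S^{1})=0$, so the connecting term $H_{T}^{2j-1}(S^{1})$ vanishes and the restriction map is injective in each degree. Exactness then identifies its image with the kernel of the surjection $\mathbb{Z}[\alpha]\oplus \mathbb{Z}[\alpha]\to H_{T}^{*}(S^{1})\cong \mathbb{Z}[\alpha]/\langle n\alpha\rangle$ given by $f\oplus g\mapsto f-g \bmod n\alpha$; this kernel is precisely
\[
\{\,f\oplus g\in \mathbb{Z}[\alpha]\oplus \mathbb{Z}[\alpha]\ \mid\ f-g\equiv 0 \bmod n\alpha\,\},
\]
which is the congruence subring in the statement. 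Combining the two steps, the restriction map is an injective ring homomorphism onto this subring, hence the desired ring isomorphism.

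The main obstacle is the passage from an additive to a multiplicative statement: by itself \eqref{MVexseq} only produces an isomorphism of $H^{*}(BT)$-modules, so the real content beyond the module computation is that this isomorphism is implemented by the fixed-point restriction, whose multiplicativity forces the product on the congruence ring to be component-wise. Once the Mayer--Vietoris map $(i_{0}^{*},i_{1}^{*})$ is identified with restriction to $\{p,q\}$, the ring isomorphism follows at once. Finally I would note the boundary cases consistent with the hypothesis $n\ge 1$: the case $n=1$ recovers the ordinary GKM congruence modulo $\alpha$ (the effective action), while $n=0$ gives the trivial, non-GKM action treated separately in \eqref{trivial-case}--\eqref{trivial-case2} and is excluded here.
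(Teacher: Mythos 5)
Your proposal is correct and follows essentially the same route as the paper: the Mayer--Vietoris sequence for the cover $\{U_{0},U_{1}\}$, vanishing of $H_{T}^{odd}(S^{1})$ for $n\ge 1$ to get injectivity of the fixed-point restriction, and identification of the image with the kernel of the difference map to $H_{T}^{*}(S^{1})\simeq \mathbb{Z}[\alpha]/\langle n\alpha\rangle$. Your explicit remark that multiplicativity is carried by the restriction homomorphism (rather than by the exact sequence itself) is a point the paper leaves implicit, but it is the same argument.
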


We can also get the ring structure as follows:
\begin{guess}
\label{CP1-ring}
For $n\ge 0$, 
the ring structure of the equivariant cohomology $H_{\varphi_{n}}^{*}(\mathbb{CP}^{1})$ can be expressed as follows: 
\begin{align*}
H_{\varphi_{n}}^{*}(\mathbb{CP}^{1})\simeq \mathbb{Z}[\tau_{p},\tau_{q}, \alpha]/\langle \tau_{p}\tau_{q}, n\alpha-\tau_{p}+\tau_{q} \rangle,
\end{align*} where 
\begin{itemize}
\item $\tau_{p}, \tau_{q}$  are the equivariant Thom classes  associated with the fixed points $p=[1:0]$ and $q=[0:1]$, respectively   {(cf. Figure \ref{generators_of_non_effective_2_sphere})}. 
\item  $\alpha$ is the pull-back of the generator of $H^{*}(BT)\simeq \mathbb{Z}[\alpha]$.
\end{itemize}
\end{guess}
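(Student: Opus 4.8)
The plan is to exhibit the three named classes as explicit elements of the presentation furnished by Theorem~\ref{GKM description} and then to verify that the two displayed relations generate the whole kernel of the evaluation map. Working in the model $H_{\varphi_n}^*(\mathbb{CP}^1)\simeq\{f:\{p,q\}\to\mathbb{Z}[\alpha]\mid f(p)-f(q)\equiv0\bmod n\alpha\}$ for $n\ge1$, I would define $\tau_p$ by $\tau_p(p)=n\alpha,\ \tau_p(q)=0$, define $\tau_q$ by $\tau_q(p)=0,\ \tau_q(q)=-n\alpha$, and let $\alpha$ denote the constant function $p,q\mapsto\alpha$, i.e.\ the pull-back of the generator of $H^*(BT)$. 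These are precisely the equivariant Thom classes of the fixed points, localizing to the tangent weights $n\alpha$ at $p$ and $-n\alpha$ at $q$ and to $0$ elsewhere, as in \cite[Section~4]{mmp07}. A direct check shows each satisfies the congruence $f(p)-f(q)\equiv0\bmod n\alpha$, and that $\tau_p\tau_q=0$ and $n\alpha-\tau_p+\tau_q=0$ hold vertexwise; hence there is a well-defined ring homomorphism $\Psi:\mathbb{Z}[\tau_p,\tau_q,\alpha]/\langle\tau_p\tau_q,\,n\alpha-\tau_p+\tau_q\rangle\to H_{\varphi_n}^*(\mathbb{CP}^1)$.

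For $n\ge1$ I would use the linear relation to eliminate $\tau_q=\tau_p-n\alpha$, so that $\tau_p\tau_q$ becomes $\tau_p^2-n\alpha\tau_p$ and the source ring is identified with $\mathbb{Z}[\tau_p,\alpha]/\langle\tau_p^2-n\alpha\tau_p\rangle$. Since we are quotienting $\mathbb{Z}[\alpha][\tau_p]$ by a monic quadratic in $\tau_p$, this ring is free of rank two over $\mathbb{Z}[\alpha]$ with basis $\{1,\tau_p\}$; thus every class has a unique normal form $a(\alpha)+b(\alpha)\tau_p$ with $a,b\in\mathbb{Z}[\alpha]$. Under $\Psi$ such a class evaluates to $\big(a+nb\alpha,\,a\big)$. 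Surjectivity then follows because any $(f,g)$ in the target has $f-g=n\alpha\,h$ for a unique $h\in\mathbb{Z}[\alpha]$ (as $n\ge1$ and $\mathbb{Z}[\alpha]$ is a domain), whence $(f,g)=\Psi\big(g+h\tau_p\big)$; injectivity follows since $\Psi(a+b\tau_p)=0$ forces $a=0$ and $nb\alpha=0$, hence $b=0$. This proves the isomorphism for $n\ge1$.

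The case $n=0$ must be handled separately, since $\varphi_0$ is the trivial action and is not a GKM manifold, so Theorem~\ref{GKM description} does not apply. Here the relation $n\alpha-\tau_p+\tau_q$ degenerates to $\tau_q=\tau_p$, and $\tau_p\tau_q$ becomes $\tau_p^2$, so the source ring collapses to $\mathbb{Z}[\tau_p,\alpha]/\langle\tau_p^2\rangle$; identifying $\tau_p$ with the class $x$ arising from the connecting homomorphism in \eqref{trivial-case2} matches the computed ring $\mathbb{Z}[x,\alpha]/\langle x^2\rangle$. I expect the only real subtlety to be bookkeeping: fixing the sign convention so that the tangent weight at $q$ is $-n\alpha$ (forcing $\tau_q(q)=-n\alpha$ and making the relation $n\alpha-\tau_p+\tau_q$ hold), and treating the degenerate $n=0$ case through \eqref{trivial-case2} rather than through the GKM description. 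Everything else reduces to the freeness of a quotient by a monic polynomial, which is routine.
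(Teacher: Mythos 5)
Your proposal is correct and follows essentially the same route as the paper: define $\tau_p$, $\tau_q$, $\alpha$ as explicit functions in the GKM model of Theorem~\ref{GKM description}, check the two relations vertexwise to get a ring map, eliminate $\tau_q$ via the linear relation to identify the source with $\mathbb{Z}[\tau_p,\alpha]/\langle \tau_p^2-n\alpha\tau_p\rangle$ (free over $\mathbb{Z}[\alpha]$ on $\{1,\tau_p\}$), and verify bijectivity by a normal-form computation, with $n=0$ handled separately through \eqref{trivial-case2}. One small point in your favor: your convention $\tau_q(q)=-n\alpha$ is the one consistent with the stated relation $n\alpha-\tau_p+\tau_q$ and with Figure~\ref{generators_of_non_effective_2_sphere}, whereas the paper's proof text writes $\tau_q(q)=n\alpha$ (and correspondingly $n\alpha=\tau_p+\tau_q$), a sign slip that your explicit bookkeeping avoids.
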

\begin{proof}
For $n=0$, the statement follows from \eqref{trivial-case2}.
Assuming $n\ge 1$, we need to show the isomorphism:
\begin{align*}
GKM_{\varphi_{n}}(\mathbb{CP}^{1})&:=\{f:\{p,q\}\to \mathbb{Z}[\alpha]\ |\ f(p)-f(q)\equiv 0 \mod n\alpha \} \\
& \simeq 
\mathbb{Z}[\tau_{p},\tau_{q}, \alpha]/\langle \tau_{p}\tau_{q}, n\alpha-\tau_{p}+\tau_{q} \rangle.
\end{align*}
In $GKM_{\varphi_{n}}(\mathbb{CP}^{1})$, the equivariant Thom classes are defined as follows: 
\begin{itemize}
\item $\tau_{p}(p)=n\alpha$ and $\tau_{p}(q)=0$; 
\item $\tau_{q}(q)=n\alpha$ and $\tau_{q}(p)=0$; 
\item $\alpha$ is constant, so $\alpha(p)=\alpha=\alpha(q)$.
\end{itemize}
Moreover, it is easy to check that the relations $\tau_{p}\tau_{q}=0$ and $n\alpha=\tau_{p}+\tau_{q}$ hold in $GKM_{\varphi_{n}}(\mathbb{CP}^{1})$.
Thus, this defines the ring homomorphism 
\begin{equation}
\Phi:\mathbb{Z}[\tau_{p},\tau_{q}, \alpha]/\langle \tau_{p}\tau_{q}, n\alpha-\tau_{p}+\tau_{q} \rangle\to GKM_{\varphi_{n}}(\mathbb{CP}^{1}) 
\end{equation}
by taking the equivariant Thom classes and $\alpha$ defined as above.

To show that the map $\Phi$ is an isomorphism, we first consider the module structure. 
Since 
\begin{align*}
\mathbb{Z}[\tau_{p},\tau_{q}, \alpha]/\langle \tau_{p}\tau_{q}, n\alpha-\tau_{p}+\tau_{q} \rangle\simeq 
\mathbb{Z}[\tau_{p}, \alpha]/\langle \tau_{p}(\tau_{p}-n\alpha) \rangle,
\end{align*}
we have the following module isomorphism:
\begin{align}
\label{module-structure}
\mathbb{Z}[\tau_{p},\tau_{q}, \alpha]/\langle \tau_{p}\tau_{q}, n\alpha-\tau_{p}+\tau_{q} \rangle\simeq
\bigoplus_{i=0}^{\infty}\mathbb{Z}\alpha^{i}\oplus 
\bigoplus_{i=0}^{\infty}\mathbb{Z}\tau_{p}\alpha^{i}.
\end{align} 
On the other hand, by the congruence relations,   {every element in $GKM_{\varphi_{n}}(\mathbb{CP}^{1})$ can be written in one of the following forms  $f,g:\{p,q\}\to \mathbb{Z}[\alpha]$,  such that}
\begin{align*}
f(p)=\sum_{i=0}^{l}r_{i}   {\alpha^{i}}+\sum_{i=1}^{m}nk_{i}\alpha^{i},\ f(q)=\sum_{i=0}^{l}r_{i}\alpha^{i}
\end{align*}
or
\begin{align*} 
g(p)=\sum_{i=0}^{l}r_{i}\alpha^{i},\ g(q)=\sum_{i=0}^{l}r_{i}+\sum_{i=1}^{m}nk_{i}\alpha^{i},
\end{align*}  
where $r_{i}, k_{i}\in \mathbb{Z}$.
Namely, we can write using $\tau_{p},\tau_{q}=\tau_{p}-n\alpha$ and $\alpha$ as follows:
\begin{align*}
\Phi\left(\sum_{i=0}^{l}r_{i}\alpha^{i}+\tau_{p}\sum_{i=0}^{m-1}k_{i+1}\alpha^{i}\right)=f 
\end{align*}
or
\begin{align*}
\Phi\left(\sum_{i=0}^{l}r_{i}\alpha^{i}+(\tau_{p}-n\alpha)\sum_{i=0}^{m-1}k_{i+1}\alpha^{i}\right)=g. 
\end{align*}
Therefore, $\Phi$ is surjective.
If $f=0$ (or $g=0$), then $r_{i}=k_{i}=0$ in the above expressions. 
Therefore, $\Phi$ is injective.
This establishes the proof. 
\end{proof}

\begin{figure}[H]
\begin{tikzpicture}
\begin{scope}[xshift=150, xscale=1, yscale=1]

\fill (-2,1) circle (1pt); 
\fill (-2,-1) circle (1pt); 
\node[right] at (-2,1) {$p$};
\node[left] at (-2,-1) {$q$};
\draw (-2,1)--(-2,-1);
\draw[->] (-2,1)--(-2,0.5);
\node[right] at (-2,0.5) {$n \alpha$};
\draw[->] (-2,-1)--(-2,-0.5);
\node[left] at (-2,-0.5) {$0$};

\fill (0,1) circle (1pt); 
\fill (0,-1) circle (1pt); 
\node[right] at (0,1) {$p$};
\node[left] at (0,-1) {$q$};
\draw (0,1)--(0,-1);
\draw[->] (0,1)--(0,0.5);
\node[right] at (0,0.5) {$0$};
\draw[->] (0,-1)--(0,-0.5);
\node[left] at (0,-0.5) {$-n\alpha$};

\fill (2,1) circle (1pt); 
\fill (2,-1) circle (1pt); 
\node[right] at (2,1) {$p$};
\node[left] at (2,-1) {$q$};
\draw (2,1)--(2,-1);
\draw[->] (2,1)--(2,0.5);
\node[right] at (2,0.5) {$\alpha$};
\draw[->] (2,-1)--(2,-0.5);
\node[left] at (2,-0.5) {$\alpha$};

\end{scope}
\end{tikzpicture}
\caption{Visualizing the generators $\tau_{p}, \tau_{q}$ and $\alpha$ (from left) of $H_{\varphi_{n}}^{*}(\mathbb{CP}^{1})$ for $n\ge 1$.}
\label{generators_of_non_effective_2_sphere}
\end{figure}
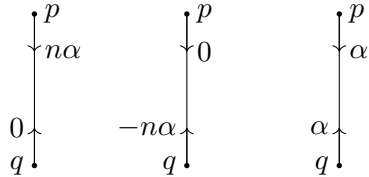

\begin{rema}
In \cite{zo}, Zollar also studies the GKM theory of non-effective torus actions in a more general setting.
\end{rema}

\section*{Acknowledgment}
S. Kuroki was supported by JSPS KAKENHI Grant Number 21K03262. 
B. Paul acknowledges the Chennai Mathematical Institute and the Infosys Foundation for their financial support. The authors are grateful to the anonymous referees for a careful reading of the manuscript and very valuable comments and suggestions. 


\end{document}